\newcommand{\wt}[1]{\widetilde{#1}}
\newcommand{\mb}[1]{\mathbb{#1}}
\newcommand{\ove}[1]{\overline{#1}}
\newcommand{\mtc}[1]{\mathcal{#1}}
\newcommand{\mtf}[1]{\mathfrak{#1}}
\newcommand{\mts}[1]{\mathscr{#1}}
\DeclareMathOperator{\tw}{tw}
\DeclareMathOperator{\stab}{Stab}
\DeclareMathOperator{\st}{st}
\DeclareMathOperator{\PGL}{PGL}
\DeclareMathOperator{\Spec}{Spec}
\DeclareMathOperator{\Proj}{Proj}
\DeclareMathOperator{\mult}{mult}
\DeclareMathOperator{\CY}{CY}
\DeclareMathOperator{\Supp}{Supp}
\DeclareMathOperator{\Pic}{Pic}
\DeclareMathOperator{\codim}{codim}
\DeclareMathOperator{\Sym}{Sym}
\DeclareMathOperator{\GIT}{GIT}
\DeclareMathOperator{\Aut}{Aut}
\DeclareMathOperator{\vol}{vol}
\DeclareMathOperator{\sm}{sm}
\newcommand{\bF}{\mathbb{F}}
\newcommand{\sF}{\mathscr{F}}
\newcommand{\sheafHom}{\mathscr{H}\text{\kern -3pt {\calligra\large om}}\,}
\DeclareMathOperator{\sing}{\mathrm{sing}}
\DeclareMathOperator{\KSBA}{\mathrm{KSBA}}
\newcommand{\bV}{{\mathbb V}}
\newcommand{\bA}{{\mathbb A}}
\newcommand{\bZ}{{\mathbb Z}}
\newcommand{\bC}{{\mathbb C}}
\newcommand{\bQ}{{\mathbb Q}}
\newcommand{\bP}{{\mathbb P}}
\newcommand{\bR}{{\mathbb R}}
\newcommand{\sR}{{\mathscr{R}}}
\newcommand{\sT}{{\mathscr{T}}}
\newcommand{\sY}{{\mathscr{Y}}}
\newcommand{\sP}{{\mathscr{P}}}
\newcommand{\sS}{{\mathscr{S}}}
\newcommand{\sC}{{\mathscr{C}}}
\newcommand{\cX}{{\mathcal{X}}}
\newcommand{\cB}{{\mathcal{B}}}
\newcommand{\cY}{{\mathcal{Y}}}
\newcommand{\cG}{{\mathcal{G}}}
\newcommand{\cO}{{\mathcal{O}}}
\newcommand{\cR}{{\mathcal{R}}}
\newcommand{\cV}{{\mathcal{V}}}
\newcommand{\bmu}{\bm{\mu}}
\newcommand{\oH}{{\operatorname{H}}}
\definecolor{darkgreen}{rgb}{0.0, 0.5, 0.0}
\DeclareMathOperator{\bfM}{{\bf{M}}}
\newcommand\spec{\text{\rm Spec}}
\newcommand{\cF}{{\mathcal F}}
\newcommand\cD{{\mts{D}}}
\newcommand\cS{{\mathcal{S}}}
\newcommand\cZ{{\mathcal{Z}}}
\newcommand\cL{\mathcal{L}}
\newcommand\cC{{\mathcal{C}}}
\newcommand\cM{{\mathcal{M}}}
\newtheorem{theorem}{Theorem}[section]
\newtheorem{lemma}[theorem]{Lemma}
\newtheorem{corollary}[theorem]{Corollary}
\newtheorem{prop}[theorem]{Proposition}
\newtheorem{construction}[theorem]{Construction}
\newtheorem*{notation}{Notation}
\newtheorem{question}[theorem]{Question}
\newtheorem{remark}[theorem]{Remark}
\theoremstyle{definition}
\newtheorem{defn}[theorem]{Definition}
\newtheorem{example}[theorem]{Example}
\theoremstyle{remark}
\title{Moduli of surfaces fibered in (log) Calabi-Yau pairs II: elliptic surfaces}
\date{\today}
\subjclass[2020]{Primary 14J10, 14D06, 14J30, 14J28; Secondary 14D23, 14E30}
\author{Giovanni Inchiostro}
\address{C-138 Padelford, Box 354350, Seattle, WA 98195, USA}
\email{ginchios@uw.edu}
\author{Junyan Zhao}
\address{William E. Kirwan Hall, 4176 Campus Dr, College Park, MD 20742, USA}
\email{jzhao81@umd.edu}
\begin{document}

\begin{abstract}
In this paper, we study the KSBA moduli stack of elliptic surfaces with a bisection. 
We classify the surfaces that appear on the boundary of this stack. 
As an application, we compactify the moduli stack of hyperelliptic K3 surfaces. A key difficulty is that the absence of a section on the elliptic surfaces prevents the use of explicit steps of the minimal model program. 
Instead, we follow the new approach developed in \cite{ISZ25}. 
Using this method, we also give a simple proof of results from a series of papers on the moduli stacks of elliptic surfaces with a section 
\cite{AB_del_pezzo,inchiostro_elliptic,Bru15}.
\end{abstract}

\maketitle

\tableofcontents

\section{Introduction}
Elliptic fibrations play a central role in the study of algebraic surfaces and their moduli. 
When an elliptic surface admits a section, it can be described by a Weierstrass model, and the section provides a distinguished origin on each fiber. 
This additional structure allows one to exploit the group law on the fibers and gives access to powerful tools such as the canonical bundle formula, explicit Weierstrass equations, and concrete descriptions of degenerations. 
Consequently, elliptic surfaces with a section and their moduli have been extensively studied, ranging from the GIT approach \cite{miranda1981moduli}, to twisted stable maps \cite{AV02}, and to KSBA theory \cite{la2002explicit, ab_twisted, inchiostro_elliptic, ascher_invariance_pluri, AB_k3, ABE, BFHIMZ24}.

In contrast, elliptic surfaces without a section are substantially more difficult to analyze. 
The absence of a section complicates the study of degenerations and stable reduction, and makes it difficult to carry out explicit steps of the minimal model program. 
To address this issue, in \cite{ISZ25} we developed a new approach to study the KSBA moduli stack of fibered Calabi--Yau pairs. 
In the present paper, we apply this framework to study the moduli of elliptic surfaces with a bisection. 
We expect that this approach can be applied more broadly to the study of moduli of elliptic surfaces without a section.

\subsection*{Elliptic surfaces with a bisection}
A bisection equips the elliptic surface with an involution whose restriction to each fiber is the elliptic involution; see Construction \ref{construction}. Consider the KSBA moduli of pairs $(X, \epsilon_1 R + \epsilon_2 F)$ where:
\begin{enumerate}
    \item $X$ is a projective surface admitting a fibration $f \colon X \to C$ whose fibers are integral genus-one nodal curves, with general fiber smooth and with singular fibers marked as $F$;
    \item $X$ carries a fiberwise involution whose fixed locus is a horizontal divisor $R$; and
    \item the pair $(X, \epsilon_1 R + \epsilon_2 F)$ is KSBA-stable for $0<\epsilon_1\ll \epsilon_2 \ll 1$.
\end{enumerate}

The key reduction of this paper is to take the quotient $X \to Y$ by the fiberwise involution, which allows us to transfer the KSBA analysis from elliptic surfaces to log Calabi--Yau fibrations. 
The resulting surface $Y$ carries a natural divisor $R_Y \subseteq Y$ recording the ramification, and the induced fibration
\[
f_Y \colon (Y, \tfrac{1}{2} R_Y) \to C
\]
forms a fibered log Calabi--Yau pair. 
This enables us to apply the results of \cite{ISZ25} to study the original pairs $(X, \epsilon_1 R + \epsilon_2 F)$.

We begin by analyzing such pairs in full generality. 
Our main results classify the surfaces that occur on the boundary of the KSBA moduli space generically parametrizing pairs $(X, \epsilon_1 R + \epsilon_2 F)$ as above.

\begin{theorem}\label{thm_intro_1} Let $\ove{\mtc{M}}^{\KSBA}_{\textup{ell},2}(\epsilon_1,\epsilon_2)$ be the closure of the locus of elliptic surfaces as above, and let $(X,\epsilon_1R+\epsilon_2F)$ be a pair appearing on the boundary of this moduli space. Then:
\begin{enumerate}
    \item there is a morphism $ X\to C$ to a nodal curve, with pure 1-dimensional fibers,
    \item the possible singularities of $(X,\epsilon_1R+\epsilon_2F)$ over the smooth locus of $C$ are classified in Theorems \ref{thm:Singularities over the Smooth Locus I} and \ref{thm:Singularities over smooth locus II}, and
    \item the possible fibers of $(X,\epsilon_1R+\epsilon_2F)\to C$ are classified in Theorems \ref{thm:Singularities over the Smooth Locus I}, \ref{thm:Singularities over smooth locus II} and \ref{thm:Singular fibers over nodes}.
\end{enumerate}
\end{theorem}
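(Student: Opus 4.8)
The proof is essentially an assembly of the local classification results referenced in the statement, together with one structural input from \cite{ISZ25} that controls how the fibration degenerates. The plan is as follows.

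\textbf{Reduction to the log Calabi--Yau quotient.} First I would observe that any pair $(X, \epsilon_1 R + \epsilon_2 F)$ on the boundary of $\ove{\mtc{M}}^{\KSBA}_{\textup{ell},2}(\epsilon_1,\epsilon_2)$ arises as the central fiber of a one-parameter family $(\mtc{X}, \epsilon_1 \mtc{R} + \epsilon_2 \mtc{F}) \to \Delta$ whose generic fiber is an elliptic surface with a fiberwise involution as in the introduction. Quotienting by the involution over the punctured disk $\Delta^\ast$ produces a family of fibered log Calabi--Yau pairs $(\mtc{Y}, \tfrac{1}{2}\mtc{R}_Y) \to \Delta^\ast$, each equipped with a fibration $f_Y \colon (Y_t, \tfrac{1}{2} R_{Y,t}) \to C_t$. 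The first key point is that taking the KSBA limit commutes with passing to the quotient: after a finite base change the involution extends to $\mtc{X}$, the quotient $\mtc{X}/\iota$ has a central fiber $Y_0$, and because $0 < \epsilon_1 \ll \epsilon_2 \ll 1$ one checks that (up to a crepant modification) $(Y_0, \tfrac{1}{2} R_{Y,0})$ is the limit of the quotiented family of fibered log Calabi--Yau pairs; equivalently, $X_0$ is recovered as the double cover of $Y_0$ branched along $R_{Y,0}$ together with the relevant vertical components.

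\textbf{Proof of (1), (2), (3).} Now I would invoke the structural theory of \cite{ISZ25}: the limit of a family of fibered log Calabi--Yau pairs again admits a fibration over a (possibly nodal) curve $C$ with pure $1$-dimensional fibers. Applying this to $(Y_0, \tfrac{1}{2} R_{Y,0})$ yields $f_{Y} \colon Y_0 \to C$ with $C$ nodal and equidimensional fibers; composing the finite double cover $X_0 \to Y_0$ with $f_Y$ gives the morphism $X_0 \to C$, and finiteness of the cover forces the fibers to remain pure of dimension one, which is (1). Statements (2) and (3) are then local over $C$: over the smooth locus of $C$ the possible singularities of $(X,\epsilon_1 R+\epsilon_2 F)$ and the possible fibers are exactly what is obtained by analyzing the KSBA-stable limits of the log Calabi--Yau quotient over a smooth point and taking the double cover, which is the content of Theorems \ref{thm:Singularities over the Smooth Locus I} and \ref{thm:Singularities over smooth locus II}; over a node of $C$ the additional possibilities for the fiber are Theorem \ref{thm:Singular fibers over nodes}. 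Collecting these cases gives (2) and (3).

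\textbf{Main obstacle.} The crux is the commutation asserted in the first step: that the KSBA-stable limit of the elliptic family and the limit of its log Calabi--Yau quotient determine one another via the double cover, so that the fine classification carried out for $(Y, \tfrac{1}{2} R_Y)$ transports faithfully to $(X, \epsilon_1 R + \epsilon_2 F)$. This requires controlling the crepant modifications introduced when passing between the two moduli problems and verifying that the involution, the branch divisor $R$, and the marked fiber $F$ all extend compatibly over the central fiber; the regime $0 < \epsilon_1 \ll \epsilon_2 \ll 1$ is precisely what makes $R$ subordinate to the fibration data and allows this comparison to go through.
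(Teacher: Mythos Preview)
Your proposal is correct and follows essentially the same route as the paper: reduce to the quotient $(Y,(\tfrac{1}{2}+\epsilon_1)R_Y+\epsilon_2 F)$, invoke the structural results of \cite{ISZ25} to produce the fibration $Y\to C$ over a nodal curve, and then read off the classification on $X$ as the double cover. Two small sharpenings relative to what you wrote: the involution extends to the KSBA limit \emph{automatically}, with no base change and no crepant modification, simply because the KSBA moduli stack is separated (this is Lemma~\ref{lem:automorphism extends}), and the quotient is then directly KSBA-stable by Corollary~\ref{cor_to_take_ksba_limit_one_can_take_quotient_first}; so the ``main obstacle'' you flag is in fact handled by a two-line argument rather than by any delicate comparison of crepant models.
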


\subsection*{Moduli of hyperelliptic K3 surfaces}  A polarized K3 surface $(X,L)$ of degree $(L^2)=2g-2$ is called \emph{hyperelliptic} if $L$ is globally generated but not very ample. By \cite{May72}, for a polarized K3 surface $(X,L)$, if $L$ is not very ample, then 
\begin{enumerate}
    \item either $|L|$ is base-point free, and $(X,L)$ is hyperelliptic; or
    \item $|L|$ has a base component, and $(X,L)$ is \emph{unigonal}.
\end{enumerate}
The hyperelliptic and unigonal K3 surfaces form two Noether-Lefschetz divisors in the moduli of polarized K3 surfaces, called the \emph{hyperelliptic divisor} and \emph{unigonal divisor} respectively. As an application, our main results compactify these two divisors, and describe the objects parametrized by the boundaries explicitly. As the moduli of unigonal K3 surfaces is studied in \cite{Bru15,AB_k3}, we only state the result concerning the hyperelliptic K3 surfaces; see Corollary \ref{cor:HE K3 surfaces}.

\begin{theorem}[Compactification of the moduli of hyperelliptic K3 surfaces]\label{thm:compactification of moduli of HE K3}
For any genus $g \geq 3$, there exists a Deligne–Mumford stack $\mtc{M}^{\KSBA}_{\textup{ell},2}(\epsilon_1,\epsilon_2)$, whose normalization of this stack is a compactification of the normalization of the hyperelliptic divisor $\mts{D}^{g}_{2,0}$ in the moduli stack $\mts{F}_g$ of polarized K3 surfaces of genus $g$. If a pair appears on the boundary of $\mtc{M}^{\KSBA}_{\textup{ell},2}(\epsilon_1,\epsilon_2)$, it is described in Theorems~\ref{thm:elliptic with a bisection moduli}, \ref{thm:Singularities over the Smooth Locus I}, \ref{thm:Singularities over smooth locus II}, and~\ref{thm:Singular fibers over nodes}. 
\end{theorem}

\subsection*{Elliptic surfaces with a section}

Revisiting the KSBA moduli of elliptic surfaces with a section, our approach also yields an alternative—and significantly streamlined—proof of certain results in \cite{AB_k3, AB_del_pezzo, inchiostro_elliptic}. 
Let $0<\epsilon\ll 1$ be a sufficiently small rational number, and let 
\[
\vec{a}=(a_1,\ldots,a_n), \qquad 0 < a_i \le 1.
\]
Consider a Weierstrass fibration
\[
(X_\eta,\ \epsilon S_\eta + \vec{a}F_\eta)\ \to\ C_\eta
\]
over the generic point $\eta$ of a DVR $A$, whose fibers are at worst irreducible nodal curves, and such that the pair $(X_\eta,\epsilon S_\eta+\vec{a}F_\eta)$ is KSBA-stable. Here:
\begin{itemize}
    \item $S_\eta$ is a section of $X_\eta\to C_\eta$;
    \item $p_\eta = p_{\eta,1}+\cdots+p_{\eta,n}$ is a divisor of marked points on $C_\eta$, and 
    \[
        \vec{a}p_\eta := \sum_{i=1}^n a_i p_{\eta,i};
    \]
    \item $F_\eta = F_{\eta,1}+\cdots+F_{\eta,n}$ is the corresponding sum of fibers, and 
    \[
        \vec{a}F_\eta := \sum_{i=1}^n a_i F_{\eta,i}.
    \]
\end{itemize}
Let $\mathbf{M}_\eta$ denote the moduli part of the canonical bundle formula for $X_\eta \to C_\eta$.

\smallskip
\begin{theorem}
The following statements hold:
\begin{enumerate}
    \item[\textup{(1)}] After a finite base change of $\Spec A$, the KSBA-stable extension $(X,\epsilon S+\vec{a}F)$ of $(X_\eta,\epsilon S_\eta+\vec{a}F_\eta)$ over $\Spec A$ admits a fibration over a generalized pair
    \[\textstyle
       (X,\epsilon S+\vec{a}F)\xrightarrow{\pi} (C,\vec{a}p+\bfM)\xrightarrow{g}\Spec A,
    \]
    with $g$  a family of nodal pointed curves with $K_C+\vec{a}p+\bfM$ ample, and whose generic fiber is $ (C_\eta,\vec{a}p_\eta+\mathbf{M}_\eta)$, and such that $f^*(K_C+\bfM +\vec{a}p)\sim_\bQ K_X+\vec{a}F$.
    \item[\textup{(2)}] The family $C\to\Spec A$ is the coarse space of a family of twisted curves $\cC \to \Spec A$ equipped with a fibration $(\cX,\cS) \to \cC$, whose geometric fibers $(\cX_p,\cS_p)$ are Weierstrass fibrations of the form
    \[
    \bigl( y^2x = x^3 + axz^2 + bz^3,\ [0,1,0] \bigr),
    \]
    and such that the induced morphism on coarse spaces is $(\cX,\cS)\to\cC$ giving $(X,S)\to C$.

    \item[\textup{(3)}] The fibers of $(X,S)\to C$ over codimension-one points of $C$ have at worst nodal singularities.
\end{enumerate}
\end{theorem}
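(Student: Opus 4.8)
The plan is to derive part (1) from the general structure theorem of \cite{ISZ25} by passing to the quotient by the fiberwise inversion, and then to upgrade this to parts (2) and (3) by analysing the minimal Weierstrass model of the limit over a twisted base; this is precisely the mechanism that lets us avoid the \emph{La Nave flip} of \cite{la2002explicit} used in \cite{AB_del_pezzo,inchiostro_elliptic}. First, since $X_\eta\to C_\eta$ has a section $S_\eta$, inversion in the group law is a fiberwise involution, and the quotient $(Y_\eta,\tfrac12 B_\eta+\vec a F_{Y,\eta})\to C_\eta$ is a fibration in log Calabi--Yau curves with general fiber $(\bP^1,\tfrac12(p_1+\dots+p_4))$, exactly the setting of \cite{ISZ25}; moreover $X_\eta\to Y_\eta$ is the canonical double cover attached to the $\bZ/2$-action, so both the canonical bundle formula and KSBA-stability descend to and ascend from this cover. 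Applying the main theorem of \cite{ISZ25} to the $Y$-side and pulling back along $\pi$ produces, after a finite base change of $\Spec A$, the fibration $(X,\epsilon S+\vec a F)\xrightarrow{\pi}(C,\vec a p+\bfM)\xrightarrow{g}\Spec A$ over a generalized pair with $g$ a family of nodal pointed curves, $K_C+\vec a p+\bfM$ ample and generic fiber $(C_\eta,\vec a p_\eta+\mathbf{M}_\eta)$; since the fibers of $X\to C$ are irreducible there is no discriminant term, so $K_X\Qequiv \pi^*(K_C+\bfM)$ and hence $\pi^*(K_C+\bfM+\vec a p)\Qequiv K_X+\vec a F$, with $\bfM$ the moduli part restricting to $\mathbf{M}_\eta$.

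Next, for part (2): by the structure theory of \cite{ISZ25} (compare \cite{Bru15,AB_del_pezzo}) the polarization $K_C+\vec a p+\bfM$ exhibits $C$ as the coarse space of a canonical family of twisted curves $\cC\to\Spec A$ on which the $\bQ$-divisor $\bfM$ lifts to an honest line bundle $\sL$, with stacky locus supported in codimension $\ge 2$. Pulling $X\to C$ back to $\cC$ and forming the minimal Weierstrass model of the genus-one fibration with section $\cX_\eta\to\cC_\eta$ over the stack $\cC$ (regular in codimension one), integrality of $\sL$ leaves no room for superfluous twisting of the Weierstrass data, so $\cX=\Proj_\cC\Sym(\sO\oplus\sL^{\otimes 2}\oplus\sL^{\otimes 3})$ is cut out by $y^2x=x^3+axz^2+bz^3$ with $a\in H^0(\cC,\sL^{\otimes 4})$, $b\in H^0(\cC,\sL^{\otimes 6})$ and $\cS=[0,1,0]$. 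One then checks that the coarse space of this twisted Weierstrass pair is slc with ample log canonical class and has generic fiber $(X_\eta,\epsilon S_\eta+\vec a F_\eta)$; by uniqueness of KSBA-stable limits it therefore coincides with $(X,S)\to C$, and this coarsening step is exactly what replaces the explicit flip.

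For part (3): a codimension-one point of $C$ is either a codimension-one point of $C_\eta$, where the fiber of $X\to C$ is at worst nodal by hypothesis, or the generic point of a component of the central fiber of $C\to\Spec A$; all such points lie off the stacky locus, so near them $\cX\to\cC$ is an honest minimal Weierstrass fibration over a DVR. Choosing the base change in part (1) large enough that the elliptic fibration acquires semistable reduction along each of the finitely many codimension-one loci of $C$ — so that the additive fibers are forced into codimension two, over the nodes of the central fiber — leaves only smooth and multiplicative ($I_m$) fibers there, which are irreducible nodal cubics; this gives (3).

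The main obstacle is the identification in part (2) of the KSBA-stable limit with the coarsening of the twisted Weierstrass fibration, and in particular the verification that this coarse space is slc with ample log canonical class — equivalently, that passing to coarse spaces commutes with the relevant log canonical models. This packages into a single stacky statement the case-by-case surface-singularity analysis that in \cite{AB_del_pezzo,inchiostro_elliptic} was carried out by hand through the flip of \cite{la2002explicit}, and it relies on the classification of the possible singular fibers over codimension-one and codimension-two points obtained in Theorems~\ref{thm:Singularities over the Smooth Locus I}, \ref{thm:Singularities over smooth locus II} and~\ref{thm:Singular fibers over nodes}, together with control over the extension of the moduli line bundle $\sL$ across the twisted locus of $\cC$.
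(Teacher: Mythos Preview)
Your overall strategy---pass to the quotient $Y_\eta$ by the hyperelliptic involution and invoke \cite{ISZ25}---is the same as the paper's, but there is a genuine gap at the very first step which the rest of the argument does not recover from.

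The KSBA limit you need on the $Y$-side is that of
\[
\bigl(Y_\eta,\ (\tfrac12+\epsilon)\Sigma_\eta+\tfrac12 T_\eta+\vec a F_\eta\bigr),
\]
where the branch divisor splits as $R_\eta=\Sigma_\eta+T_\eta$ with $\Sigma_\eta$ a section and $T_\eta$ a trisection, and the coefficients are \emph{asymmetric}: only the section gets bumped to $\tfrac12+\epsilon$ (this is what Corollary~\ref{cor_to_take_ksba_limit_one_can_take_quotient_first} gives you after quotienting $(X_\eta,\epsilon S_\eta+\vec a F_\eta)$). The main theorem of \cite{ISZ25} that you cite applies instead to the \emph{symmetric} pair $(Y_\eta,(\tfrac12+\epsilon_1)R_\eta+\epsilon_2 F'_\eta)$, with the auxiliary marking $\epsilon_2 F'_\eta$ on the non-lc fibers. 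These are different wall chambers, and the passage between them is the actual content of the proof: one must show that $\Sigma$ and $T$ stay $\bQ$-Cartier and disjoint in the symmetric limit (Lemmas~\ref{lem:Q-Cartierness of T and Sigma} and~\ref{lem:T and Sigma are disjoint}), then track the log canonical model as one lowers the coefficient of $T$ to $\tfrac12$ and sends $\epsilon_2\to 0$. This wall-crossing contracts the ``type~III'' components away from $\Sigma$, certain rational bridges, and rational tails with $\deg\bfM=1$; only after this does one know that the resulting $Y^{\st}\to C^{\st}$ is (the coarse space of) a $\bP^1$-bundle over a twisted curve. Your sentence ``pulling back along $\pi$ produces the fibration'' elides exactly this, and without it you have no fibration $X\to C$ to speak of.

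Two smaller points downstream: in part~(2) you appeal to Theorems~\ref{thm:Singularities over the Smooth Locus I}--\ref{thm:Singular fibers over nodes}, but those classify fibers in the \emph{bisection} moduli problem and are not used in the paper's proof of the Weierstrass theorem; the paper instead gets the global Weierstrass form by extending the classifying map $\cC\to[\bA^2/\bG_m]$ via \cite[Lemma~2.1]{twisted_map_2}. In part~(3), ``choosing the base change large enough to force semistable reduction along each codimension-one locus of $C$'' is circular, since $C_0$ is only determined after the wall-crossing and depends on the base change; the paper obtains~(3) directly from the fact that $Y^{\st}\to C^{\st}$ has $\bP^1$-fibers in codimension one together with the slc condition on $(Y^{\st},(\tfrac12+\epsilon)\Sigma^{\st}+\tfrac12 T^{\st})$, which forbids a triple point of $T^{\st}$ on a generic fiber.
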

\noindent The following diagram summarizes the situation:
    \[
    \begin{tikzcd}[ampersand replacement=\&]
        {(X_\eta,\epsilon S_\eta + \vec{a}F_\eta)} \& {(X,\epsilon S+\vec{a}F)} \&\& {(\cX,\epsilon\cS+\vec{a}\cF)} \\
        {(C_\eta,\vec{a}p_\eta+\bfM)} \& {(C,\vec{a}p+\bfM)} \&\& {\cC} \\
        \eta \& {\Spec A}
        \arrow[from=1-1, to=1-2]
        \arrow[from=1-1, to=2-1]
        \arrow[from=1-2, to=2-2]
        \arrow["{\textup{cms}}"', from=1-4, to=1-2]
        \arrow["{\textup{Weierstrass}}"{description}, from=1-4, to=2-4]
        \arrow[from=2-1, to=2-2]
        \arrow[from=2-1, to=3-1]
        \arrow[from=2-2, to=3-2]
        \arrow["{\textup{cms}}"{description}, from=2-4, to=2-2]
        \arrow[from=2-4, to=3-2]
        \arrow[hook, from=3-1, to=3-2]
    \end{tikzcd}
    \]
    
The stable reduction arguments in \cite{AB_k3, AB_del_pezzo, inchiostro_elliptic} rely on an explicit MMP and, in particular, on an explicit flip—the \emph{La Nave flip} \cite{la2002explicit}—which renders parts of the analysis rather technical. 
In contrast, the present work avoids the La Nave flip entirely by invoking general results from \cite{ISZ25}, especially \cite[Theorem~1.2]{ISZ25}, together with the canonical bundle formula.

\subsection*{Outline of the paper}

In Section~\S2, we summarize the results of \cite{ISZ25} that will be used throughout the paper. Section~\S3 analyzes the double covers of the components of KSBA-stable surface pairs fibered in log Calabi--Yau curves over a nodal base, and classifies the fibers over the nodes. We then combine these ingredients to describe the KSBA moduli stack of elliptic surfaces with an unmarked bisection. Section~\S4 systematically studies the hyperelliptic divisor in the moduli stack of polarized K3 surfaces and the corresponding moduli stack of lattice-polarized K3 surfaces, and proves Theorem~\ref{thm:compactification of moduli of HE K3}. Finally, in Section~\S5 we use the results of \cite{ISZ25} to recover the descriptions of the KSBA moduli stacks of elliptic surfaces with a marked section obtained in \cite{AB_del_pezzo, inchiostro_elliptic, Bru15}.

\begin{itemize}
    \item We work over an algebraically closed field $k$ of characteristic $0$. The reader may assume $k=\mathbb{C}$.
    \item We say that $Y \to (X,\tfrac{1}{2}D)$ is a double cover if $Y \to X$ is a double cover branched along the divisor $D$.
    \item The \emph{coarse space} of a Deligne–Mumford (abbreviated DM) stack $\mtc{X}$ is its coarse moduli space $\mtc{X}\to X$ (cf.~\cite[Tag~04UX]{stacks-project}). We drop the word ``moduli'' since many stacks we consider need not have a modular interpretation.
    \item A ``big open'' subset of an algebraic stack $\cX$ is a Zariski open subset containing all the codimension one points.
    
    \item Throughout the paper, we denote by $A$ a DVR, since the letter $R$ is reserved for the ramification divisor. For any DVR $A$, let $\eta$ (resp.\ $0$) be the generic (resp.\ closed) point of $\Spec A$. If $f : X \to \Spec A$ is an object over $\Spec A$ (e.g.\ a family of pairs), we denote by $X_{\eta}$ (resp.\ $X_0$) the generic (resp.\ special) fiber of $f$.
    
    \item Let $A$ be a DVR with generic point $\eta$. For an object $X_{\eta} \to \eta$, an \emph{extension} of $X_{\eta}$ is an object $X \to \Spec A$ such that the fiber product $X \times_{A} \eta$ is isomorphic to $X_{\eta}$. This explains the previous convention. A \emph{limit} of $X_{\eta}$ is the special fiber $X_0$ of an extension $X \to \Spec A$. The extensions (resp.\ limits) of interest are the KSBA–stable extensions (resp.\ limits), which are unique.
\end{itemize}

For the reader’s convenience, we collect here the notations most frequently used throughout the paper. For the first five lines, when we write ``KSBA-moduli stack'' of a certain class of objects $\mathfrak{C}$ (for example, rational Weierstrass fibrations or K3 Weierstrass fibrations), we mean the closure in the KSBA-moduli space of the locus parametrizing objects in $\mathfrak{C}$.
\renewcommand{\arraystretch}{1.5}

\begin{center}
\begin{longtable}{| p{.18\textwidth} | p{.75\textwidth} |}
    \hline \textbf{Notation} & \textbf{Definition/Description}   \\ 
    \hline $\overline{\mtc{M}}^{\KSBA}(\epsilon_1,\epsilon_2)$ &   KSBA-moduli stack of surface pairs fibered in log-canonical pairs of the form $\big(\bP^1,\frac{1}{2}(p_1+\cdots+p_4)\big)$ \\ \hline
     $\ove{\mtc{W}}^{\KSBA}_{\textup{rat}}(\epsilon_1,\epsilon_2)$   & KSBA-moduli stack of rational Weierstrass fibrations \\ \hline
     $\ove{\mtc{W}}^{\KSBA}_{\textup{K3}}(\epsilon_1,\epsilon_2)$   & KSBA-moduli stack of K3 Weierstrass fibrations \\ \hline
     $\ove{\mtc{W}}^{\KSBA}(\epsilon_1,\epsilon_2)$   & KSBA-moduli stack of Weierstrass fibrations with $\kappa= 1$\\ \hline
       $\ove{\mtc{M}}^{\KSBA}_{\textup{ell},2}(\epsilon_1,\epsilon_2)$   & KSBA-moduli stack of elliptic surfaces with a unmarked bi-section   \\ \hline
    $\sP_1^{\GIT}$ & GIT moduli stack $\big[|\mtc{O}_{\bP^1}(4)|^{ss}/\PGL(2)\big]$ of pairs $(\bP^1,p_1+\cdots+p_{4})$     \\ \hline
      $\sP_1$   & an enlargement of $\sP_1^{\GIT}$, see \cite[Section 2.1]{ISZ25}   \\ \hline
       $\Spec A$ & Spectrum of a DVR with generic point $\eta$ and special point $0$   \\ \hline
        $\mts{F}_{\Lambda}$ & moduli stack of lattice $\Lambda$-polarized K3 surfaces \\ \hline
\end{longtable}
    
\end{center}

\subsection*{Acknowledgments}
We thank Dori Bejleri, Yuchen Liu, Luca Schaffler and Roberto Svaldi for helpful conversations. GI was supported by AMS-Simons travel grant and by the NSF grant DMS-2502104. JZ was supported by AMS-Simons travel grant.

\section{KSBA-moduli stacks for certain ruled surfaces}\label{sec:KSBA-moduli stacks for certain ruled surfaces}

In this section, we summarize the structural results and relevant machinery developed in \cite{ISZ25} that will be used in this paper, for the reader's convenience. We refer the reader to \cite[\S1, Introduction]{ISZ25} for a more detailed overview.

\begin{defn}
    A pair $(X, D)$ is called \textit{KSBA-stable} if 
    \begin{enumerate}
        \item it is semi-log canonical (abbv. slc) and $X$ is connected;
        \item $K_X+D$ is an ample $\bQ$-Cartier $\bQ$-divisor.
    \end{enumerate}
    The \emph{volume} of a KSBA-stable pair $(X,D)$ is $\vol(X,D) = (K_X + D)^{\dim X}$.
\end{defn}

\begin{theorem}[\textup{cf. \cite[Theorem 8.15]{Kol23}}]
    Fix $v\in \mb{Q}_{>0}$ and some rational numbers $c_i\in [0,1]_{\mb{Q}}$. Then there is a proper Deligne-Mumford stack $\overline{\mtc{M}}^{\KSBA}_{c_i,v}$, whose closed points parametrize KSBA-stable surface pairs $(X,\sum c_iD_i)$ with volume $(K_X+\sum c_iD_i)^2=v$, where each $D_i$ is an effective $\mathbb{Z}$-divisor.
\end{theorem}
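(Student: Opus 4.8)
Since being KSBA-stable forces $K_X+\sum c_iD_i$ to be ample, this is a special case of the general construction of moduli of stable pairs, and the plan is to assemble the standard ingredients. First I would fix the moduli pseudo-functor: a family over a base $S$ is a flat proper morphism $\mathcal{X}\to S$ with $\bZ$-divisors $\mathcal{D}_i$ satisfying Koll\'ar's condition (the reflexive powers of $\omega_{\mathcal{X}/S}(\sum c_i\mathcal{D}_i)$ commute with base change), all of whose geometric fibers are KSBA-stable surface pairs with $(K+\sum c_iD)^2=v$ and the prescribed coefficients. The first real input is \emph{boundedness}: the set of such pairs, with the $c_i$ in a fixed finite set and $v$ fixed, forms a bounded family. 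For surfaces this is Alexeev's theorem (and in general Hacon--McKernan--Xu); it yields a finite-type scheme $H$ — a locally closed subscheme of a suitable Hilbert scheme of pluri-log-canonically embedded pairs — together with a universal family such that every KSBA-stable pair in question occurs as a fiber.

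Next I would verify \emph{local closedness}: inside $H$ the locus where the fiber is slc with ample log canonical class and the correct volume is locally closed, using openness of the slc condition in flat families (again due to Koll\'ar, building on inversion of adjunction) and constructibility of the volume. The moduli stack is then obtained as the quotient stack $[H^\circ/\PGL_N]$ for the $\PGL_N$ acting on the embedding, where $H^\circ\subseteq H$ is this locally closed locus; because $K_X+\sum c_iD_i$ is ample the automorphism group of each pair is finite, so the quotient is a Deligne--Mumford stack. At this stage one has an algebraic stack of finite type, and it remains to show it is proper, i.e. separated and universally closed, which I would do via the valuative criterion over a DVR $A$.

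For \emph{separatedness} one shows that two KSBA-stable families over $\Spec A$ with isomorphic generic fibers are isomorphic: this is the uniqueness of the log canonical model, a consequence of the base-point-free theorem applied to the (semi-)ample class $K+\sum c_iD$. For \emph{properness} (existence of limits) one starts with a family over the punctured disc $\Spec A\setminus\{0\}$, applies semistable reduction after a finite base change of $\Spec A$, and then runs a relative MMP over $\Spec A$ to reach the relative log canonical model, whose central fiber one checks is slc of the right volume; here one invokes existence of log canonical models together with the behavior of semi-log-canonical singularities under such modifications. I expect this last point — properness, and specifically producing an slc central fiber after semistable reduction and MMP — to be the main obstacle, since it rests on the deepest results of the minimal model program and on the subtle interplay between slc singularities and degenerations; all of this is exactly the content packaged in \cite[Theorem~8.15]{Kol23}, which I would ultimately cite.
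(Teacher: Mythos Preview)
Your outline is correct and matches the standard route to this result; the paper itself gives no proof at all and simply cites \cite[Theorem~8.15]{Kol23}, so your proposal, which sketches the standard ingredients (boundedness via Alexeev, local closedness, quotient by $\PGL_N$, and properness via semistable reduction plus relative MMP) before ultimately invoking the same reference, is more detailed than but entirely consistent with what the paper does.
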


Consider two-dimensional log pairs
$(Y, \frac 12 R)$, where $Y$ is a proper surface and $R$ is a Weil divisor, that are endowed with a fibration 
\[
\pi : \bigl(Y, \tfrac{1}{2}R\bigr) \longrightarrow C
\]
to a curve whose fibers are one-dimensional log Calabi–Yau pairs of the form
\((\mathbb{P}^1, \tfrac{1}{2}(p_1+\cdots+p_{4}))\), satisfying that
\begin{enumerate}
    \item[(a)] 
    the divisor $R$ cuts $4$ distinct points on a general fiber of $\pi$;
    \item[(b)] 
    denoting by $F$ the sum of the fibers of $\pi$ meeting $R$ in fewer than $4$ points, 
    the pair $\bigl(Y, (\tfrac{1}{2}+\epsilon_1)R+\epsilon_2F\bigr)$ is KSBA-stable for every
    $0 < \epsilon_1 \ll \epsilon_2 \ll 1$.
\end{enumerate}

We now recall the main result of \cite{ISZ25} for the case $n=1$ in \textit{loc. cit.}, dividing it into two parts.
\begin{theorem}[\textup{cf. \cite[Theorem 1.1]{ISZ25}}]\label{thm: main theorem of ISZ25} For $0<\epsilon_1\ll \epsilon_2\ll1$, consider the closure of the locus of surface pairs $(Y,(\frac{1}{2}+\epsilon_1)R +\epsilon_2 F)$ satisfying conditions (a)-(b) as above, with all the fibers of $\pi$ being log-canonical, denoted by $\overline{\mtc{M}}^{\KSBA}(\epsilon_1,\epsilon_2)$. Then $\overline{\mtc{M}}^{\KSBA}(\epsilon_1,\epsilon_2)$ parametrizes KSBA-stable pairs $$\textstyle \big(Y_0,(\frac{1}{2}+\epsilon_1)R_0+\epsilon_2F_0\big),$$ where 
\begin{itemize}
    \item $Y_0$ is a demi-normal surface which admits a morphism $g_0:Y_0\rightarrow C_0$ to a (possibly reducible) nodal curve with equi-dimensional fibers;
    \item $R_0$ is a divisor relatively ample over $C_0$ such that $g_0|_{R_0}:R_0\rightarrow C_0$ is of degree $4$, with the map $R_0\to C_0$ possibly not finite; and
    \item $F_0$ is a divisor whose support consists of fibers $F_{0,i}$ of $g_0:Y_0\rightarrow C_0$ such that $R_0|_{F_{0,i}}$ is not four distinct points.
\end{itemize}
\end{theorem}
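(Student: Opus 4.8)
\emph{Proof strategy.} The statement is the case $n=1$ of \cite[Theorem~1.1]{ISZ25}, so the plan is to recall the shape of that argument in the present, simpler, setting. By \cite[Theorem~8.15]{Kol23} the closure $\overline{\mtc{M}}^{\KSBA}(\epsilon_1,\epsilon_2)$ is a proper Deligne--Mumford stack whose points are KSBA-stable pairs of the prescribed volume, so the only thing to establish is the geometric description of a boundary pair $\big(Y_0,(\tfrac{1}{2}+\epsilon_1)R_0+\epsilon_2F_0\big)$. Using the valuative criterion of properness, I would fix a DVR $A$ together with a family over $\Spec A$ whose generic fiber $\big(Y_\eta,(\tfrac{1}{2}+\epsilon_1)R_\eta+\epsilon_2F_\eta\big)$ lies in the interior locus --- so that $\pi_\eta\colon Y_\eta\to C_\eta$ is a fibration over a smooth curve all of whose fibers are log canonical pairs $\big(\bP^1,\tfrac{1}{2}(p_1+\cdots+p_4)\big)$ --- and then identify its unique KSBA-stable limit.

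The heart of the argument is to extend the fibration over the limit. Since every fiber of $\pi_\eta$ is log Calabi--Yau, $K_{Y_\eta}+\tfrac{1}{2}R_\eta$ is numerically trivial along those fibers, and the canonical bundle formula rewrites it as the pullback of $K_{C_\eta}+\bfM_\eta$ plus the discriminant. The plan is then: after a finite base change of $\Spec A$ and passage to a model with semi-log-canonical central fiber, run the relevant MMP over $\Spec A$ to produce a morphism $g\colon Y\to\mtc{C}$ from the total space to a family $\mtc{C}\to\Spec A$ of at-worst-nodal curves restricting to $\pi_\eta$ over $\eta$; the moduli divisor $\bfM$ must be carried along so that the limiting base $C_0:=\mtc{C}_0$ is recognised, via the one-dimensional (generalised) theory of stable pointed curves applied to $(C_\eta,\bfM_\eta+\text{marked fibers})$, as a nodal pointed curve. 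Restricting $g$ to the central fiber gives $g_0\colon Y_0\to C_0$.

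It then remains to verify the three bulleted properties. Demi-normality of $Y_0$ is part of being slc. Equi-dimensionality of the fibers of $g_0$ is arranged in the construction of $\mtc{C}$ --- any component of $Y_0$ that would be contracted to a point is absorbed into the base --- and one checks that the resulting pair stays KSBA-stable. Relative ampleness of $R_0$ over $C_0$ and the equality $\deg\big(g_0|_{R_0}\big)=4$ are the flat specialisations of the corresponding facts for $R_\eta$, which meets each fiber of $\pi_\eta$ in $4$ points. Finally $F_0$ is, by the very definition of $F$, the sum of the fibers of $g_0$ over which $R_0$ fails to be $4$ distinct points, so that bullet is automatic once the limiting fibration is in place; and conversely any KSBA-stable pair of the correct volume with these features occurs on the boundary, because the closure is swept out by limits of interior one-parameter families.

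The step I expect to be the main obstacle is the extension of the fibration in the second paragraph: one has to show that the semiample system defining $\pi_\eta$ does not acquire base points or contract a horizontal curve on the special fiber, and one has to propagate $\bfM$ correctly through the semistable MMP in order to see the limit of the base. This is precisely the technical core of \cite{ISZ25}; once it is available, the remaining points reduce to bookkeeping with flat limits and the definition of KSBA-stability.
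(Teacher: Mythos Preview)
Your outline captures the overall logic --- realize every boundary point as a DVR limit, extend the fibration across the special fiber, and then read off the three bullets from flat specialisation --- and this is indeed the shape of the argument. Note, however, that in the present paper this theorem is only \emph{quoted} from \cite{ISZ25}; the paper does not reprove it, but it does sketch the strategy of \cite{ISZ25} immediately after the statement, and that sketch differs from yours in one essential technical point.

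The key step you flag as ``the main obstacle'' --- extending $\pi_\eta$ across the special fiber --- is \emph{not} handled in \cite{ISZ25} by extending a semiample system on the threefold or by running an MMP on $Y$ to manufacture a morphism to a curve. Instead, one uses the moduli interpretation: the fibration $(Y_\eta,\tfrac{1}{2}R_\eta)\to C_\eta$ is encoded by a morphism $C_\eta\to \sP_1$ to the stack parametrizing pairs $(\bP^1,\tfrac{1}{2}(p_1+\cdots+p_4))$, and one extends this as a \emph{stable quasimap} $\cC^{\tw}\to \sP_1$ from a family of twisted curves over $\Spec A$. Pulling back the universal family then gives a DM stack $(\cY^{\tw},\tfrac{1}{2}\cR^{\tw})\to \cC^{\tw}$ whose fibers are automatically the correct log Calabi--Yau curves. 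Only after this does an MMP appear, and it is run on the \emph{base}: one takes the relative log canonical model of the generalized surface pair $(\cC^{\tw},\bfM^{\tw})\to\Spec A$, and the contractions there are lifted to birational contractions of the threefold via \cite[Theorem~1.2]{ISZ25}. Your proposal to propagate $\bfM$ through an MMP is in the right spirit, but the MMP is on $C$, not on $Y$; and the fibration is obtained from the quasimap, not from showing that a linear system stays base-point-free. The ``ruled model'' mentioned in the paper's sketch is a further device used to analyze the resulting components, which your outline does not touch but is not needed for the bare statement of the theorem.
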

The map $R_0\to C_0$ is possibly not finite as it could contain some fiber components of $Y_0\to C_0$.
\begin{theorem}[\textup{cf. \cite[Propositions~4.3, 4.4, and~4.6]{ISZ25}}]\label{thm: main theorem 2 of ISZ25}
    With notation as above, let $G$ be an irreducible component of $C_0$, and let $G^n \to G$ denote its normalization. Then there exists a smooth orbifold curve $\cG^n$ with coarse space $G^n$ such that the reduced structure of $Y|_{G^n}$ is the coarse moduli space of one of the following (see \textup{Figure \ref{fig:Surfaces of different types}}):
    \begin{enumerate}
        \item a projective bundle $\bP_{\cG^n}(\cV)$ over $\cG^n$;
        \item a weighted blow-up of the surface $\bP_{\cG^n}(\cV)$;
        \item the gluing of a surface $\bP_\mtc{E}(\cV)$ ruled over a smooth, possibly non-connected orbifold curve $\mtc{E}$ with at most two connected components, glued along an involution $\mtc{E} \to \mtc{E}$ acting transitively on the connected components of $\mtc{E}$.
    \end{enumerate}
\end{theorem}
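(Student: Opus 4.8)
The plan is to reduce to a component-by-component, local analysis of the fibration $Z:=(Y|_{G^n})_{\mathrm{red}}\to G^n$, and to run it through adjunction, the canonical bundle formula, and a relative MMP over $G^n$; the cleanest write-up simply quotes \cite[Propositions~4.3, 4.4 and~4.6]{ISZ25}, so I only outline the mechanism. The starting observation is that the generic fiber of $Z\to G^n$ is a rational curve carrying the degree-$4$ restriction of $R_0$; since the function field $k(G^n)$ is $C_1$ (Tsen), this generic fiber is $\bP^1$, so $Z$ is birational over $G^n$ to a $\bP^1$-bundle. From here I would use, in this order: (i) that $(Y_0,(\tfrac12+\epsilon_1)R_0+\epsilon_2F_0)$ is slc, together with adjunction along the fiber $G^n\hookrightarrow C_0$, to control the singularities of $Z$ and of the restricted divisor $R_Z$; (ii) the canonical bundle formula for the fibered log Calabi--Yau pair $(Z,\tfrac12 R_Z)\to G^n$, which gives $K_Z+\tfrac12 R_Z\Qequiv\pi^\ast(K_{G^n}+B+\bfM)$ for a discriminant $B$ and a nef moduli part $\bfM$; and (iii) a relative minimal model program over $G^n$.

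Concretely, let $\wt Z\to Z$ be the normalization and treat one connected component $W$ at a time. Running the $\big(K_W+\tfrac12 R_W+\epsilon_2 F_W\big)$-MMP over $G^n$ contracts only vertical curves, and since (i)--(ii) make $K_W+\tfrac12 R_W$ $\pi$-numerically trivial up to the $\epsilon$-corrections, the output is a relative Mori fiber space over $G^n$, hence an orbifold $\bP^1$-bundle $\bP_{\cG^n}(\cV)$ for a smooth orbifold curve $\cG^n$ with coarse space $G^n$; the stacky points of $\cG^n$ are the finitely many points over which the fibration degenerates, with weights encoding the degeneration type (monodromy) of the fiber pair $\big(\bP^1,\tfrac12\sum_{i=1}^4 p_i\big)$, and $\bP_{\cG^n}(\cV)$ is designed so that its coarse space recovers the cyclic quotient singularities of $W$. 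If the MMP on $W$ was trivial, $W$ is already this coarse space --- case (1). If the MMP contracted a vertical chain, inverting it exhibits $W$ as the coarse space of a weighted blow-up of $\bP_{\cG^n}(\cV)$ --- case (2). The exact weights and blow-up data are forced by the admissible slc degenerations: a collision of two of the $p_i$, or a collision forcing a $(-1)$-curve to bubble off, are the only possibilities, and each has a unique normal form.

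To recover $Z$ from $\wt Z$ I would examine the non-normal locus --- equivalently, where $Y|_{G^n}$ fails to be reduced or normal. It is horizontal over $G^n$ and, being compatible with the degree-$4$ multisection $R_0$, maps with degree two onto $G^n$; its preimage in $\wt Z$ is the pullback of a degree-two cover $\cE\to\cG^n$, which has at most two connected components. Applying (ii)--(iii) to $\wt Z$ realizes it as $\bP_{\cE}(\cV)$, and the deck involution of $\cE/\cG^n$ supplies the data gluing $\bP_{\cE}(\cV)$ back to $Z$; connectedness of $Y_0$ forces this involution to act transitively on the components of $\cE$. This is case (3); when the non-normal locus is empty and $\wt Z=Z$ is irreducible we are back in cases (1)--(2).

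The step I expect to be the real obstacle is the local classification hidden in the second paragraph: one must enumerate \emph{exhaustively} the slc germs of $\big(\text{surface},\,\tfrac12 R+\epsilon F\big)\to(\text{curve})$ with $R$ of relative degree $4$ and fibers of log Calabi--Yau type $\big(\bP^1,\tfrac12(p_1+\cdots+p_4)\big)$, and match each one to a root-stack-plus-weighted-blow-up normal form over $\cG^n$ --- i.e.\ classify the admissible collisions of the four marked points and the cyclic and Du Val singularities they force on the total space, and prove that nothing else can occur. The canonical bundle formula and the relative MMP are the structural engine, but checking completeness of the list is where the work is concentrated; this is exactly the content of \cite[Propositions~4.3--4.6]{ISZ25}.
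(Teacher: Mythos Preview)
This statement is not proved in the present paper; it is recalled from \cite{ISZ25}, and the paper only outlines the strategy (see the discussion preceding Corollary~\ref{cor:fiber over the node}). That strategy is substantially different from your sketch. In \cite{ISZ25} one does not run an MMP on the surface component $W$ over $G^n$. Instead one works with the whole threefold family over $\Spec A$: after a base change, $(Y,\tfrac12 R)\to C$ is obtained as the coarse space of a family $(\cY,\tfrac12\cR)\to\cC$ pulled back from the universal family over the stack $\sP_1$ via a representable morphism from a \emph{twisted curve} $\cC$. The orbifold curve $\cG^n$ is then inherited directly from the twisted-curve structure on $\cC$, and its stacky points lie only over the nodes of $C_0$ (cf.\ Corollary~\ref{cor:projective bundle over smooth locus} and Remark~\ref{rem:no difference over smooth locus}). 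The trichotomy (1)--(3) reflects the local geometry of this pulled-back family near a node, analyzed through the ``ruled model'' in the outlined strategy; the MMP that appears is on the base surface $(C,\bfM)\to\Spec A$, lifted to the threefold via \cite[Theorem~1.2]{ISZ25}, not on the fiber surface.

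Your approach attempts to recover the orbifold structure from the component $W$ alone. The main gap is the step ``the output is a relative Mori fiber space over $G^n$, \emph{hence} an orbifold $\bP^1$-bundle $\bP_{\cG^n}(\cV)$'': a normal surface with a relatively minimal $\bP^1$-fibration over a smooth curve is not automatically the coarse space of an orbifold $\bP^1$-bundle --- one must first know that its singularities are cyclic quotients of the correct local type along the fibers, and that is exactly what needs to be proved. Your treatment of case~(2) has the same circularity: asserting that the contracted curves invert to a weighted blow-up presupposes the singularity classification you postpone to the final paragraph. In the quasimap approach these structural facts are not deduced from a surface MMP at all; they come for free from the representability of $\cC\to\sP_1$ and the explicit description of the universal family over $\sP_1$, which is why the answer is as clean as it is.
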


\begin{figure}
    \centering
    \includegraphics[width=0.7\linewidth]{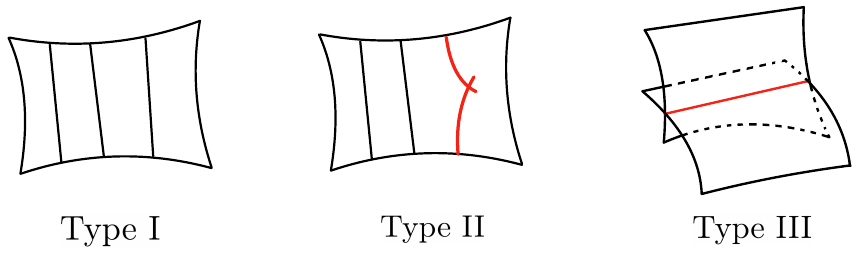}
    \caption{Surfaces of different types}
    \label{fig:Surfaces of different types}
\end{figure}
 
\begin{remark}\label{rem:no difference over smooth locus}\textup{
We refer to surfaces in the classes \textup{(1)}, \textup{(2)}, and \textup{(3)} in Theorem \ref{thm: main theorem 2 of ISZ25} as being of type~I, II, and III, respectively. Surfaces of type~I and type~II differ only over the nodal locus of $C_0$. In particular, a surface of type~II, when restricted to $G \cap C_0^{\sm}$, is the coarse moduli space of a projective bundle as a surface of type I.}
\end{remark}

\begin{corollary}\label{cor:projective bundle over smooth locus}
Let $G$ be a component of $C^{\st}$, and suppose that $Y|_G$ is a surface of type~\textup{I} or~\textup{II}. Then the morphism \(Y|_G \to G\) is a projective bundle over \(G \cap C^{\sm}\).
\end{corollary}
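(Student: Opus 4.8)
The plan is to deduce the statement from Theorem~\ref{thm: main theorem 2 of ISZ25} and Remark~\ref{rem:no difference over smooth locus}; the only remaining work is a routine descent from the normalization of $G$ to $G$ together with a check that forming coarse spaces over the smooth locus produces an honest $\bP^1$-bundle rather than a conic bundle with quotient singularities.

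First I would note that, $C$ being nodal, the locus $G\cap C^{\sm}$ is open in $C$ and the normalization $\nu\colon G^n\to G$ restricts to an isomorphism $U:=\nu^{-1}(G\cap C^{\sm})\xrightarrow{\ \sim\ }G\cap C^{\sm}$. Consequently $Y|_{G^n}$ over $U$ is identified with $Y|_G$ over $G\cap C^{\sm}$, and the latter is an open subscheme of the demi-normal surface $Y_0$, hence reduced; so over $U$ the scheme $Y|_{G^n}$ coincides with its own reduced structure. It therefore suffices to prove that $Y|_{G^n}\to G^n$ is a projective bundle over $U$.

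Next, apply Theorem~\ref{thm: main theorem 2 of ISZ25}: since $Y|_G$ is of type~I or~II, the reduced structure of $Y|_{G^n}$ is the coarse space of either a projective bundle $\bP_{\cG^n}(\cV)\to\cG^n$, or of a weighted blow-up of such a $\bP_{\cG^n}(\cV)$, where $\cG^n$ is a smooth orbifold curve with coarse space $G^n$. By Remark~\ref{rem:no difference over smooth locus}, over $G\cap C^{\sm}$ the type~II case agrees with the type~I case — the weighted blow-up being performed over the nodes of $C$, hence trivial over $U$ — so in both cases $Y|_{G^n}$ over $U$ is the coarse space of $\bP_{\cG^n}(\cV)$ restricted to $\cU:=\cG^n\times_{G^n}U$. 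It remains to see that the coarse space of $\bP_{\cG^n}(\cV)|_{\cU}\to\cU$ is a Zariski-locally trivial $\bP^1$-bundle over $U$.

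This final step is the one I expect to require care, and it is exactly the content of Remark~\ref{rem:no difference over smooth locus} as extracted from the construction in \cite[Propositions~4.3, 4.4, and~4.6]{ISZ25}: over $G\cap C^{\sm}$ any orbifold structure carried by $\cG^n$ is a root-stack structure on the base curve alone (present only to separate the horizontal components of $R$ and to rigidify the classifying map to $\sP_1$), so the stabilizer of each stacky point of $\cU$ acts by scalars on $\cV$, hence trivially on the fibres of $\bP_{\cG^n}(\cV)$. Absorbing that scalar (equivariant) structure into a line-bundle twist, one gets $\bP_{\cG^n}(\cV)|_{\cU}\cong\cU\times_U\bP_U(\cE)$ for some rank-two bundle $\cE$ on $U$; since coarse-space formation commutes with the flat representable morphism $\bP_U(\cE)\to U$, the coarse space of $\cU\times_U\bP_U(\cE)$ is $\bP_U(\cE)$, a projective bundle over $U$. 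Transporting this through $U\cong G\cap C^{\sm}$ gives the claim. (As an alternative to the last two sentences, once one knows that every fibre of $Y|_G\to G$ over $G\cap C^{\sm}$ is a reduced $\bP^1$ and that the fibration has a section — the image in the coarse space of a sub-line-bundle of $\cV$ — one can instead invoke Tsen's theorem, $U$ being a curve over an algebraically closed field, to conclude that $Y|_G\to G$ is a projective bundle over $G\cap C^{\sm}$.)
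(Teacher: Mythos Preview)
Your proposal is correct and follows essentially the same route as the paper: reduce to the orbifold description from Theorem~\ref{thm: main theorem 2 of ISZ25}, use Remark~\ref{rem:no difference over smooth locus} to collapse type~II to type~I over $G\cap C^{\sm}$, and conclude that the coarse space is a $\bP^1$-bundle. The paper's proof is more direct at the last step: it simply cites \cite{ISZ25} to assert that $\cG^n$ carries \emph{no} stacky points over $G\cap C^{\sm}$, so your paragraph arguing that residual stabilizers act by scalars on $\cV$ is unnecessary (and that particular claim is not what the cited propositions actually give --- the stronger statement is that $\cU=U$); your alternative via Tsen's theorem, once every fibre is known to be a reduced $\bP^1$, is exactly the paper's concluding sentence.
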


\begin{proof}
It follows from the results in \cite{ISZ25} that for the surface $Y|_G$, the only stacky (i.e.\ nontrivial orbifold) behavior arises over the nodal locus \(C^{\sing}\). Over the smooth locus \(G \cap C^{\sm}\), every fiber is $\bP^1$, and hence it is a projective bundle.
\end{proof}

Before stating the next result, we briefly outline the strategy used to prove the previous results.  
Since the moduli stack $\overline{\mtc{M}}^{\KSBA}(\epsilon_1,\epsilon_2)$ is separated and every object it parametrizes is obtained as a degeneration over the spectrum of a DVR $A$ of a general pair 
\[
(Y_\eta,(\tfrac{1}{2}+\epsilon_1)R_\eta + \epsilon_2 F_\eta),
\]
one may instead start with such a general pair over the generic point $\eta\in\Spec A$ and recover its unique KSBA-stable limit over $\Spec A$. This is carried out in two steps:

\begin{enumerate}
\item Using stable quasimaps \cite{twisted_map_2} one constructs an extension
\[
\bigl(\mtc{Y}^{\tw},(\tfrac{1}{2}+\epsilon_1)\mtc{R}^{\tw}+\epsilon_2\mtc{F}^{\tw}\bigr)\ \longrightarrow\ \Spec A
\]
as a DM stack, together with a morphism to a family of twisted curves 
\[
\mtc{C}^{\tw} \to \Spec A.
\]
This fits into the diagram
\[\begin{tikzcd}[ampersand replacement=\&]
	{\big(\mtc{Y}^{\tw},(\frac{1}{2}+\epsilon_1)\mtc{R}^{\tw}+\epsilon_2\mtc{F}^{\tw}\big)} \&\& {(\mtc{C}^{\tw},\bfM^{\tw})} \\
	\& {\Spec A}
	\arrow["\pi", from=1-1, to=1-3]
	\arrow["f"', from=1-1, to=2-2]
	\arrow["g", from=1-3, to=2-2]
\end{tikzcd}\]
where $\mathbf{M}^{\tw}$ is the moduli part of the canonical bundle formula for the lc-trivial fibration 
\[
(\mtc{Y}^{\tw},\tfrac{1}{2}\mtc{R}^{\tw}) \to \mtc{C}^{\tw}.
\]
Every geometric fiber of $\pi$ is a log Calabi–Yau pair parametrized by the stack $\sP_1$ \cite[\S2.1, Notation]{ISZ25}, an enlargement of the GIT moduli stack $\sP_1^{\GIT}$ of pairs $(\bP^1,\tfrac{1}{2}(p_1+\cdots+p_4))$ with GIT semistable $p_1+\cdots+p_4$. 
\item One then runs the relative MMP with scaling for the coarse space of the surface generalized pair  
\[
(\cC^{\tw},\mathbf{M}^{\tw}) \to \Spec A,
\]
and lifts this MMP to a sequence of birational contractions of the coarse space of the threefold pair using \cite[Theorem 1.2]{ISZ25}
\[
\bigl(\mtc{Y}^{\tw},(\tfrac{1}{2}+\epsilon_1)\mtc{R}^{\tw}+\epsilon_2\mtc{F}^{\tw}\bigr) \to \Spec A.
\]
Each step of the surface MMP 
\[
(C^{(i)},\mathbf{M}^{(i)}) \longrightarrow (C^{(i+1)},\mathbf{M}^{(i+1)}),
\]
contracts a rational tail $G$ of $C^{(i)}_0$, and there exists a birational contraction
\[
\bigl(Y^{(i)},(\tfrac{1}{2}+\epsilon_1)R^{(i)}+\epsilon_2F^{(i)}\bigr)
\dashrightarrow
\bigl(Y^{(i+1)},(\tfrac{1}{2}+\epsilon_1)R^{(i+1)}+\epsilon_2F^{(i+1)}\bigr)
\]
such that $Y^{(i+1)}$ is fibered over $C^{(i+1)}$, and $\mathbf{M}^{(i+1)}$ is the moduli part of the canonical bundle formula applied to $(Y^{(i+1)},\tfrac{1}{2}R^{(i+1)}) \to C^{(i+1)}.$ Moreover, $Y^{(i)}\dashrightarrow Y^{(i+1)}$ is an isomorphism over $C^{(i)}\setminus G$. After finitely many steps, one obtains a relative log minimal model $(C^{\st},M^{\st})\rightarrow \Spec A$, and the corresponding threefold pair $(Y^{\st},(\frac{1}{2}+\epsilon_1)R^{\st}+\epsilon_2 F^{\st})\rightarrow \Spec A$ is KSBA-stable.
\item\label{pt_3_ruled_model} Finally one uses that the threefold $Y^{(i)}$ admits a ``ruled model'': a birational model $Y^{(i)}\dashrightarrow Z^{(i)}$ such that $Z^{(i)}$ is easier to describe than $Y^{(i)}$. To obtain our main results we study the birational transformations relating $Z^{(i)}$ and $Y^{(i)}$.
\end{enumerate}

\begin{corollary}\label{cor:fiber over the node}
With the notation as in Theorem~\ref{thm: main theorem of ISZ25}, let $p \in C^{\st}$ be either 
\begin{itemize}
    \item a node, or 
    \item a smooth point of $C^{\st}$, except for finitely many such points.
\end{itemize} Then, locally near $p$, the family $(Y^{\st}, R^{\st}) \longrightarrow (p \in C^{\st})$ is the coarse space of the pullback of the universal family $(\mtc{Y}, \mtc{R}) \longrightarrow (\mtf{p} \in \mtc{C})$ along a stable quasimap from $\mtc{C}$ to $\sP_{1}$. In particular, the fiber 
$(\mtc{Y}^{\st}_{\mtf{p}},\, \mtc{R}^{\st}_{\mtf{p}})$
is such that $\mtc{R}^{\st}_{\mtf{p}}$ is supported over four distinct points.
\end{corollary}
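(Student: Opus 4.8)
The plan is to use the fact, built into the stable-reduction procedure recalled above, that the family is modified only over the rational tails that the relative MMP contracts. Recall that $(Y^{\st},R^{\st})\to C^{\st}$ is the last term of a finite chain of birational contractions
\[
(Y^{(0)},R^{(0)})\dashrightarrow(Y^{(1)},R^{(1)})\dashrightarrow\cdots\dashrightarrow(Y^{\st},R^{\st}),
\]
lying over the steps $C^{(i)}\to C^{(i+1)}$ of the relative MMP of $(\cC^{\tw},\bfM^{\tw})$, and that $(Y^{(0)},R^{(0)})\to C^{(0)}$ is the coarse space of the stable quasimap extension $\pi\colon(\mtc{Y}^{\tw},\mtc{R}^{\tw})\to\mtc{C}^{\tw}$. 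By its construction in \cite{twisted_map_2}, away from the base locus of the quasimap --- a finite (possibly empty) set of smooth points of $\mtc{C}^{\tw}$, disjoint from its nodes --- this $\pi$ is the pullback of the universal family over $\sP_1$ along the quasimap $\mtc{C}^{\tw}\to\sP_1$. Thus the assertion already holds for $(Y^{(0)},R^{(0)})\to C^{(0)}$ at every node and at all but finitely many smooth points, and what remains is to push the statement through the chain.

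For this I would invoke the precise form of each step recalled above. The step $C^{(i)}\to C^{(i+1)}$ contracts a rational tail $G^{(i)}\subseteq C^{(i)}_0$, and the modification $Y^{(i)}\dashrightarrow Y^{(i+1)}$ restricts to an isomorphism of fibrations over $C^{(i)}\setminus G^{(i)}$; so over that locus the fibration $Y^{(i+1)}\to C^{(i+1)}$, and with it the twisted base curve and the quasimap it carries, is literally unchanged. Because a rational tail meets the remaining components of $C^{(i)}_0$ in a single node, its image $q^{(i)}\in C^{(i+1)}$ is a smooth point of the special fiber, and contracting rational tails only ever deletes nodes; so the image of each $q^{(i)}$ in $C^{\st}$ is again a smooth point. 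Let $\Sigma\subseteq C^{\st}$ be the finite set collecting the images of the base locus and of all the $q^{(i)}$; by the above, every point of $\Sigma$ is a smooth point. Over $C^{\st}\setminus\Sigma$ the entire chain, together with $C^{(0)}\to C^{\st}$, is an isomorphism; hence for any $p\in C^{\st}$ that is a node, or a smooth point not in $\Sigma$, the germ of $(Y^{\st},R^{\st})\to C^{\st}$ at $p$ agrees with the germ of $(Y^{(0)},R^{(0)})\to C^{(0)}$ at the point over $p$, which is the coarse space of the pullback of the universal family over $\sP_1$ along the germ of $\mtc{C}^{\tw}\to\sP_1$ at the corresponding point $\mtf p$. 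Taking $\mtc{C}$ to be that germ of $\mtc{C}^{\tw}$, this is exactly the first assertion; and $\Sigma$ is finite and contains no nodes, as required.

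The final sentence is then immediate. The fiber $(\mtc{Y}^{\st}_{\mtf p},\mtc{R}^{\st}_{\mtf p})$ is the fiber of this pullback over $\mtf p$, i.e.\ the log Calabi--Yau pair given by the value of the quasimap at $\mtf p$; and $\mtf p$ is not a base point, since it is either a node of $\mtc{C}$ (and the base locus avoids nodes) or lies over a smooth point of $C^{\st}$ outside $\Sigma$ (which contains the base locus). Hence this value is a genuine object of $\sP_1$, and by the construction of $\sP_1$ in \cite[\S2.1]{ISZ25} every object of $\sP_1$ is a log Calabi--Yau pair whose boundary divisor is supported at four distinct points; so $\mtc{R}^{\st}_{\mtf p}$ is supported at four distinct points.

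The step that takes genuine care is the middle one: checking, step by step, that the MMP modification affects the family only over the contracted rational tail --- so that the fibration, and with it the twisted base and the moduli part, are untouched away from it --- and that a contracted rational tail always maps to a smooth point of the base, so that the finitely many exceptional fibers never lie over a node. Granted the structural results of \cite{ISZ25} recalled above, the remaining identifications --- of the germ near a node with a pullback of the universal family, and of the fibers of that universal family over $\sP_1$ --- are essentially formal.
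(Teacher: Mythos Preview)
Your overall approach matches the paper's: you correctly argue that each MMP step modifies the family only over the contracted rational tail, that such a tail maps to a smooth point of the next base curve, and hence that near any node (or any smooth point outside a finite set $\Sigma$) the germ of $(Y^{\st},R^{\st})\to C^{\st}$ agrees with the coarse space of the quasimap pullback. This is exactly what the paper's much terser proof records when it says ``by the construction of the birational contractions, the two families \dots\ are isomorphic in a neighborhood of $p$.''

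There is, however, a gap in your justification of the final sentence. You assert that ``every object of $\sP_1$ is a log Calabi--Yau pair whose boundary divisor is supported at four distinct points,'' but this is not true. From the description of $\sP_1^{\CY}$ in Remark~\ref{rmk_we_can_ignore_twisted_node}, the pairs in $\sP_1$ are only required to have $(C,\tfrac{1}{2}(p_1+\cdots+p_4))$ slc, which permits two of the $p_i$ to coincide; indeed $\sP_1$ enlarges the GIT stack $[|\cO_{\bP^1}(4)|^{ss}/\PGL(2)]$, and GIT-semistability already allows such collisions. So knowing merely that $\mtf{p}$ is not a base point---hence that the fiber is a genuine object of $\sP_1$---does not force four distinct points. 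The paper instead invokes the stability condition for stable quasimaps from \cite[Definition~2.1]{ISZ25}; it is that condition, not the bare structure of $\sP_1$, which guarantees that at a node (and away from the finitely many excluded smooth points) the image of the quasimap lies in the open locus with four distinct marked points.
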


\begin{proof}
By the construction of the birational contractions, the two families of curves $C^{\tw}$ and $C^{\st}$ are isomorphic in a neighborhood of $p$, and likewise the pairs $(Y^{\tw},R^{\tw})$ and $(Y^{\st},R^{\st})$ are isomorphic in a neighborhood of the fiber over $p$. The final assertion follows from the stability conditions for stable quasimaps; see \cite[Definition~2.1]{ISZ25}.
\end{proof}

We conclude this section with the following technical remark, which will be used several times later in the paper. 
Readers who prefer to focus on the main ideas may wish to skip it on a first reading.

\begin{remark}\label{rmk_we_can_ignore_twisted_node}\textup{
Recall that the nodal curves parametrized by $\sP_1$ are \emph{twisted} curves, i.e.\ Deligne--Mumford stacks rather than projective schemes; see \cite[\S2.1]{ISZ25}. 
However, for the purpose of determining KSBA-stable limits, one may replace these twisted curves by their coarse moduli spaces, effectively ignoring the twisted nodes.
More precisely, since all of our results concern the coarse spaces of stacky families of surfaces or curves, we will always take coarse spaces in two steps. 
Given a family $(\cZ,\cD) \to \cC$ fibered in pairs parametrized by $\sP_1$, where $\cC$ is a twisted curve, we first take the relative coarse moduli space
\[
\cZ \longrightarrow \cZ^{\mathrm{rel}}
\]
of $\cZ \to \cC$. 
This replaces each fiber containing a twisted dumbbell curve by the corresponding (untwisted) dumbbell curve, i.e.\ it replaces every twisted curve parametrized by $\sP_1$ with its coarse space. 
We then take the absolute coarse moduli space of $\cZ^{\mathrm{rel}}$; this agrees with taking the coarse space of $\cZ$ directly.
Equivalently, one may describe this procedure using \cite[Section~2.1]{ISZ25}. 
There exists a moduli space $\sP_1^{\CY}$ parametrizing pairs $(C,p_1+\cdots+p_4)$ that arise as the coarse moduli spaces of curves in $\sP_1$. 
Concretely, such pairs satisfy $C \simeq \bP^1$ or $C$ is a nodal union of two copies of $\bP^1$, the pair $(C,\tfrac{1}{2}(p_1+\cdots+p_4))$ is slc, and the line bundle $\omega_C(\tfrac{1}{2}(p_1+\cdots+p_4))$ is trivial. 
There is a natural morphism
\[
\sP_1 \longrightarrow \sP_1^{\CY}
\]
sending each twisted curve to its coarse moduli space.
Thus, given an open substack $\cC$ of a twisted curve and a morphism $\phi\colon \cC \to \sP_1$, let $\cZ \to \cC$ be the associated family of twisted curves. 
To compute the coarse moduli space of $\cZ$, one may instead consider the composition
\[
\cC \longrightarrow \sP_1 \longrightarrow \sP_1^{\CY},
\]
and replace $\cC$ by the relative coarse moduli space $\cC \to \cC'$ of the induced morphism to $\sP_1^{\CY}$. 
The resulting pulled-back family $\cZ' \to \cC'$ has the property that its coarse moduli space agrees with the coarse moduli space of $\cZ$.}

\textup{In summary, for our purposes, we may safely ignore twisted nodes and postcompose all morphisms $\cC \to \sP_1$ with $\sP_1\to \sP_1^{\CY}$. 
While working with $\sP_1$ is convenient for proving \eqref{pt_3_ruled_model}, once this result is established one may work directly with $\sP_1^{\CY}$.}
\end{remark}

\section{Moduli of elliptic surfaces with a bisection}\label{sec:Moduli of elliptic surfaces with a bisection}


Let $\overline{\mathcal{M}}^{\KSBA}_{\mathrm{ell},2}(\epsilon_1,\epsilon_2)$ denote the irreducible closed substack of the KSBA moduli stack that generically parametrizes pairs $(X, \epsilon_1 R + \epsilon_2 F)$ with $0<\epsilon_1 \ll \epsilon_2 \ll 1$, where:
\begin{itemize}
    \item $f: X \to C$ is a smooth elliptic surface over a smooth curve, with no sections, but admitting a (non-unique) bisection $B$;

    \item $R \subseteq X$ is the ramification divisor of the quotient $X \to Y$ induced by the linear system $|B|$;

    \item $f$ has at worst irreducible nodal fibers, and $F$ denotes the sum of all nodal fibers;

    \item the Kodaira dimension satisfies $\kappa(X) \ge 0$, ensuring that the moduli stack is non-empty.
\end{itemize}


The main goal of this section is to establish the following structure theorem for pairs parametrized by the moduli stack, together with a complete classification of the singular fibers and the corresponding surface singularities.

\begin{theorem}\label{thm:elliptic with a bisection moduli}
    Any pair $(X,\epsilon_1 R+\epsilon_2F)$ parametrized by $\ove{\mtc{M}}^{\KSBA}_{\textup{ell},2}(\epsilon_1,\epsilon_2)$ admits a fibration $$f\ :\ (X,\epsilon_1 R+\epsilon_2F)\ \longrightarrow \ C$$ to a nodal curve $C$. Moreover, $\Supp F$ is the union of some singular $f$-fibers, and for any irreducible component $G$ of $C$ with normalization $G^\nu$, the surface $X|_{G^\nu}$ is the coarse space of one of the following:
    \begin{enumerate}
        \item a normal elliptic surface $X_{\mtc{G}}\rightarrow \mtc{G}$; or 
        \item a non-normal fibered surface $X_{\mtc{G}}\rightarrow {\mtc{G}}$ whose general fiber is a banana curve.
    \end{enumerate}
\end{theorem}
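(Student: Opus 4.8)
The plan is to reduce the statement to the main results of \cite{ISZ25} by means of the double cover construction recalled in the introduction, and then to transfer the classification of the ruled surfaces $Y$ to that of their double covers $X$. First, since $\ove{\mtc{M}}^{\KSBA}_{\textup{ell},2}(\epsilon_1,\epsilon_2)$ is separated and irreducible with a dense open parametrizing smooth elliptic surfaces with a bisection, every pair on its boundary is a KSBA-stable limit, over the spectrum of a DVR $A$, of a family whose generic fiber $f_\eta\colon X_\eta\to C_\eta$ is a smooth elliptic surface with at worst irreducible nodal fibers, no section, and a bisection $B_\eta$; so I may assume $(X,\epsilon_1R+\epsilon_2F)=(X_0,\epsilon_1R_0+\epsilon_2F_0)$ is such a limit. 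After a finite base change the quotient by the fiberwise involution
\[
\pi_\eta\colon X_\eta\longrightarrow Y_\eta:=\Proj_{C_\eta}\bigl(\Sym(f_{\eta*}\mathcal{O}_{X_\eta}(B_\eta))\bigr)
\]
is a double cover branched along the ramification divisor $R_\eta$, which maps isomorphically onto its image $R_{Y,\eta}\subseteq Y_\eta$, and $(Y_\eta,\tfrac12R_{Y,\eta})\to C_\eta$ is a fibration in pairs $(\bP^1,\tfrac12(p_1+\cdots+p_4))$ of the type studied in \cite{ISZ25}. Since $\pi_\eta$ is a finite double cover we have $\pi_\eta^*R_{Y,\eta}=2R_\eta$, $\pi_\eta^*F_{Y,\eta}=F_\eta$, and $K_{X_\eta}=\pi_\eta^*(K_{Y_\eta}+\tfrac12R_{Y,\eta})$, whence $K_{X_\eta}+\epsilon_1R_\eta+\epsilon_2F_\eta=\pi_\eta^*\bigl(K_{Y_\eta}+(\tfrac12+\tfrac{\epsilon_1}{2})R_{Y,\eta}+\epsilon_2F_{Y,\eta}\bigr)$; as $\pi_\eta$ is finite, KSBA-stability of the source is equivalent to that of the target. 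Thus, after relabelling $\epsilon_1$, the pair $(Y_\eta,(\tfrac12+\epsilon_1)R_{Y,\eta}+\epsilon_2F_{Y,\eta})$ is a generic pair parametrized by $\ove{\mtc{M}}^{\KSBA}(\epsilon_1,\epsilon_2)$.

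Next, by Theorems \ref{thm: main theorem of ISZ25} and \ref{thm: main theorem 2 of ISZ25}, after a further base change there is a KSBA-stable limit $(Y_0,(\tfrac12+\epsilon_1)R_{Y,0}+\epsilon_2F_{Y,0})$ over $\Spec A$, with $Y_0\to C_0$ over a nodal curve and, for each component $G$, with $Y_0|_{G^\nu}$ the coarse space of a type I, II, or III model $\mtc{Y}_{\mtc{G}}\to\mtc{G}$. I would form the double cover $\pi\colon\mtc{X}\to\mtc{Y}$ branched along the closure $\mtc{R}_Y$ of $R_{Y,\eta}$, working with the twisted model $\mtc{Y}^{\tw}$ so that, by Corollary \ref{cor:fiber over the node}, the branch is supported over four distinct points on the fibers over every node and over all but finitely many smooth points of $C^{\tw}$; the key point is then that $(\mtc{X},\epsilon_1\mtc{R}+\epsilon_2\mtc{F})\to\Spec A$ is KSBA-stable. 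Ampleness follows from $K_{\mtc{X}}+\epsilon_1\mtc{R}+\epsilon_2\mtc{F}=\pi^*\bigl(K_{\mtc{Y}}+(\tfrac12+\epsilon_1)\mtc{R}_Y+\epsilon_2\mtc{F}_Y\bigr)$ together with finiteness of $\pi$; and the total space is slc because a double cover of an slc pair $(\mtc{Y},\tfrac12\mtc{R}_Y+\Delta)$ branched along the reduced divisor $\mtc{R}_Y$ is slc provided $\mtc{R}_Y$ contains no stratum of the non-normal locus — which holds here, as $\mtc{R}_Y$ is horizontal and relatively ample over $C$, so it meets the (horizontal) non-normal locus of $Y_0$ transversally and meets each fiber over a node in four distinct smooth points. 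Since $\mtc{X}_\eta\cong X_\eta$, separatedness of $\ove{\mtc{M}}^{\KSBA}_{\textup{ell},2}(\epsilon_1,\epsilon_2)$ then forces $(X_0,\epsilon_1R_0+\epsilon_2F_0)\cong(\mtc{X}_0,\epsilon_1\mtc{R}_0+\epsilon_2\mtc{F}_0)$.

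It then remains to transfer the classification. Composing $X_0\to Y_0$ with $Y_0\to C_0$ gives the desired fibration $f\colon X_0\to C:=C_0$ to a nodal curve with pure one-dimensional fibers; the fiber over $p$ is the double cover, branched at $p_1,\dots,p_4$, of the corresponding fiber of $Y_0\to C$, which is either $\bP^1$ or a nodal union of two $\bP^1$'s together with the four marked points, so it is a curve of arithmetic genus one, smooth exactly when that base fiber is a $\bP^1$ with four distinct points; in particular $\Supp F_0$ is a union of singular $f$-fibers. For a component $G\subseteq C$, I would take the double cover $X_{\mtc{G}}\to\mtc{G}$ of the model $\mtc{Y}_{\mtc{G}}$ branched along the strict transform $\mtc{R}_{\mtc{G}}$ of $R_{Y,0}|_{G^\nu}$; its coarse space is $X_0|_{G^\nu}$, using that forming double covers commutes with passing to coarse spaces here (the cover is étale over the generic point of each stacky locus of $\mtc{Y}_{\mtc{G}}$; cf.\ Remark \ref{rmk_we_can_ignore_twisted_node}). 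If $\mtc{Y}_{\mtc{G}}$ has type I or II, then over a dense open it is a $\bP^1$-bundle over $\mtc{G}$ with $\mtc{R}_{\mtc{G}}$ a $4$-section (Remark \ref{rem:no difference over smooth locus}, Corollary \ref{cor:projective bundle over smooth locus}), so $X_{\mtc{G}}$ is a normal surface, being the double cover of a normal stack along a reduced divisor, fibered in genus-one curves over $\mtc{G}$, i.e.\ a normal elliptic surface — case (1). If $\mtc{Y}_{\mtc{G}}$ has type III, it is the gluing of a ruled surface $\bP_{\mtc{E}}(\cV)$ along an involution, whose general fiber over $\mtc{G}$ is a dumbbell $\bP^1\cup\bP^1$; branching at the four marked points (two on each component, the node left unbranched) makes the general fiber a union of two $\bP^1$'s meeting at two points — a banana curve — so $X_{\mtc{G}}$ is non-normal (it carries a curve of nodes) — case (2).

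The step I expect to be the main obstacle is the slc verification in the second paragraph: ensuring that the double cover of the \cite{ISZ25}-stable family remains slc along the fibers over the nodes of $C_0$ and along the non-normal locus of $Y_0$. This is precisely where the fine control on the branch divisor $\mtc{R}_Y$ coming from Corollary \ref{cor:fiber over the node}, and the explicit description of the components of $Y_0$ in Theorem \ref{thm: main theorem 2 of ISZ25}, are essential. A secondary, more routine issue is the compatibility between forming double covers and taking coarse moduli spaces for the stacky models appearing in the third paragraph.
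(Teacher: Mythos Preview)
Your overall strategy—reducing to \cite{ISZ25} via the hyperelliptic quotient—is exactly the paper's, and your third paragraph transferring the type I/II/III classification to the double cover is correct. The issue is the direction in which you run the central step. You take the KSBA limit $Y$ of $Y_\eta$ inside $\overline{\mtc{M}}^{\KSBA}(\epsilon_1,\epsilon_2)$ and then try to \emph{construct} a double cover $\mtc{X}\to\mtc{Y}$ branched along the closure of $R_{Y,\eta}$. This has a genuine gap: to define that cover you need a line bundle $\mtc{L}$ with $\mtc{L}^{\otimes 2}\simeq\mtc{O}(\mtc{R}_Y)$, and you give no argument that such a square root exists on the demi-normal threefold $Y$ (or on $\mtc{Y}^{\tw}$). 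Lemma~\ref{lem: pure branch locus 1}, which produces exactly this $\mtc{L}^\circ$ over a big open, is proved in the paper by already knowing that the double cover $X\to Y$ exists and analyzing its ramification—so you cannot invoke it to build $X$ in the first place. The slc verification you flag as ``the main obstacle'' is a second, separate difficulty stacked on top of this one.

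The paper runs the argument in the opposite direction and thereby sidesteps both problems. Start with the KSBA limit $(X,\epsilon_1 R+\epsilon_2 F)$ of $(X_\eta,\epsilon_1 R_\eta+\epsilon_2 F_\eta)$, which exists by properness of the KSBA moduli. By Lemma~\ref{lem:automorphism extends} the fiberwise involution on $X_\eta$ extends uniquely to an involution $\tau$ of $(X,\epsilon_1 R+\epsilon_2 F)$. Corollary~\ref{cor_to_take_ksba_limit_one_can_take_quotient_first} then shows that the quotient $\bigl(Y,(\tfrac12+\tfrac{\epsilon_1}{2})R_Y+\epsilon_2 F_Y\bigr)=(X/\tau,\ldots)$ is automatically KSBA-stable, with no slc check needed beyond the standard fact that log canonicity descends along finite quotients. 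Since $Y_\eta$ is a ruled surface of the type treated in \cite{ISZ25}, the limit $Y$ lies in $\overline{\mtc{M}}^{\KSBA}(\epsilon_1,\epsilon_2)$ and Theorems~\ref{thm: main theorem of ISZ25} and~\ref{thm: main theorem 2 of ISZ25} apply directly. Now $X\to Y$ is the quotient map, hence a bona fide double cover, and your component-by-component analysis (types I/II give normal elliptic surfaces, type III gives a banana-fibered non-normal surface) goes through verbatim. In short: extend the involution and take the quotient, rather than take the limit downstairs and try to rebuild the cover.
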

When we say elliptic surface, we mean a surface $X$ with a morphism $X\to C$ whose generic fiber is a smooth curve of genus one.

The main idea to prove Theorem \ref{thm:elliptic with a bisection moduli} is to reduce to the results in \cite{ISZ25} as follows. 
\begin{construction}\label{construction}\textup{
Let $f\colon X \to C$ be a normal elliptic fibration over a smooth curve whose singular fibers are all irreducible curves with at worse nodal singularities, and let $B \subseteq X$ be a bisection. Then the linear system $|B|$ is base-point free over $C$, and therefore induces a double cover
\[
\pi: X \longrightarrow 
Y := \Proj_C \bigoplus_{m \ge 0} \Sym^m f_* \mathcal{O}_X(B)
\]
over $C$, where:
\begin{enumerate}
    \item Every $\pi$-fiber is isomorphic to $\mathbb{P}^1$, and fiberwise the map is the standard $2\!:\!1$ cover from an elliptic curve to $\mathbb{P}^1$ determined by a $g^1_2$.
    \item The image of $B$, denoted $S$, is a section of $g : Y \to C$.
    \item The ramification divisor $R \subseteq Y$ (resp.\ $R_X \subseteq X$) of $\pi$ is a $4$-section of $g$ (resp.\ of $f$), and they satisfy the relation $\pi^* R = 2 R_X$.
    \item The pair $(X, 2a R_X + bF)$ is KSBA-stable if and only if the pair $\bigl(Y, (\frac12+a)R + bF\bigr)$ is KSBA-stable, where $0<a,b\le 1$ and $F$ are some marked fibers of $f$ or $g$.
\end{enumerate}}
\end{construction}
The other results of this section regard the singularities of the pairs appearing in $\ove{\mtc{M}}^{\KSBA}_{\textup{ell},2}(\epsilon_1,\epsilon_2)$:

\begin{theorem}[Singularities over the Smooth Locus I]\label{thm:Singularities over the Smooth Locus I}
    With the notation as in Theorem \ref{thm:elliptic with a bisection moduli}, assume that the surface $X_{G^\nu}$ is normal. 
    \begin{itemize}
        \item If a singular fiber of $X_G \to G$ over $G \cap C^{\sm}$ is reduced, then both the fiber and the corresponding singularity of $X_G$ appear in \textup{Table~\ref{table:classification of singularities}}.
        \item If $F_p$ is a non-reduced fiber of $X_G \to G$ over a point $p \in G \cap C^{\sm}$, then $F_p$ has multiplicity~$2$ and its reduced structure satisfies $(F_p)_{\textup{red}} \simeq \bP^1$. In this case, the corresponding singularity of $X_G$ is of type $A_{2k-1}$ (with $k \leq 4$), or of type $D_m$, or of type $E_7$.
    \end{itemize}
\end{theorem}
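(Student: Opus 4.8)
The plan is to reduce the statement to the local geometry of the double cover $\pi\colon X_G\to Y_G$ of the ruled surface and of its branch curve. Write $Y_G:=Y|_{G^\nu}$. Since $X_{G^\nu}$ is normal, it is a normal elliptic surface (Theorem~\ref{thm:elliptic with a bisection moduli}(1)), so its quotient $Y_G$ by the fibrewise involution is a normal ruled surface over $G^\nu$, hence of type~I or~II; by Corollary~\ref{cor:projective bundle over smooth locus} the map $Y_G\to G^\nu$ is an honest $\bP^1$-bundle over $U:=G^\nu\cap C^{\sm}$, and in particular $Y_G$ is smooth over $U$. The involution presents $\pi\colon X_G\to Y_G$ as a double cover whose branch divisor $B$ is reduced (because $X_G$ is normal) and equals $R$ over the generic point of $U$; thus over $U$ one has $B=R|_U+V$ with $V$ a reduced sum of fibres of $Y_G\to U$. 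I would then fix $p\in U$, choose a coordinate $t$ on $C$ cutting out the fibre $Y_p$ and a local trivialization, so that $Y_G$ is locally $\mathbb A^2_{t,x}$, the branch curve is $\{h=0\}$ with $h=t^{\varepsilon}u(t,x)$ (here $\varepsilon\in\{0,1\}$ records whether $Y_p\subseteq B$, and $u$ is the local equation of the horizontal part of $B$, of degree $4$ in $x$ since $R\cdot Y_p=4$), and $X_G=\{z^2=h(t,x)\}$. Two facts structure the rest: \textbf{(i)} for a double cover of a smooth surface the singularity lying over a point of the branch curve is Du~Val (ADE) if and only if the branch curve has at worst a simple (ADE) plane-curve singularity there, and then the two singularity types agree; and \textbf{(ii)} $X_G$ has only Du~Val singularities over $U$. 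For~(ii) I would invoke that the models of \cite{ISZ25} arise from stable quasimaps to $\sP_1$ (Corollary~\ref{cor:fiber over the node}) and that, just as for minimal Weierstrass fibrations, the higher-vanishing loci of the relevant discriminant are precisely where the base $\mtc{C}$ is given orbifold structure; over the un-stacky locus $U$ (Corollary~\ref{cor:projective bundle over smooth locus}) this forces $X_G$ to be Du~Val.

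For the non-reduced case $Y_p\subseteq B$ (so $\varepsilon=1$, $h=t\,u$): first, $F_p=\pi^*Y_p$ is cut out by $z^2=0$ along $\{t=0\}$, hence equals $2\cdot\{z=t=0\}$, which is non-reduced of multiplicity~$2$ with reduced structure $\cong\bP^1$ — the first half of the second bullet. The singular points of $X_G$ on $F_p$ lie over the $(p,q)$ with $q\in\Supp(R|_{Y_p})$, where the branch curve is $\{t=0\}\cup\Gamma$ with $\Gamma$ the germ of $R$; by~(ii) this has a simple singularity, and since $\{t=0\}$ is one of its smooth branches one checks the only options are: $A_{2k-1}$ (two smooth branches of contact order $k$, one of them $\{t=0\}$); $D_m$ ($\{t=0\}$ transverse to a cusp of $R$, or to a pair of mutually tangent smooth branches of $R$); and $E_7$ ($\{t=0\}$ being the cuspidal tangent line of a cusp of $R$) — the irreducible simple germs $E_6$, $E_8$ and the cusps $y^2=x^{2k+1}$ giving $A_{2k}$ are impossible since $B$ must contain the line $\{t=0\}$. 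In the $A_{2k-1}$ case $k$ is the local intersection number of $R$ with $Y_p$ at the point, so $k\le R\cdot Y_p=4$; and by~(i) the surface singularity of $X_G$ has the same type, which proves the second bullet. For the reduced case $Y_p\not\subseteq B$: $F_p$ is the double cover of $Y_p\cong\bP^1$ branched along $D_p:=R|_{Y_p}$, a degree-$4$ divisor; running through its partition type ($1{+}1{+}1{+}1$, $2{+}1{+}1$, $2{+}2$, and, at the finitely many special points, $3{+}1$ and $4$) one reads off $F_p$ from the local models $z^2=(x-q)^m(\text{unit})$ — smooth genus one, an irreducible nodal or cuspidal genus-one curve, or a union of two $\bP^1$'s meeting at two points or tangentially at one. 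The singular points of $X_G$ on the fibre lie over the singular points of $R$ on $Y_p$, which by~(ii) are simple plane-curve singularities; bounding their multiplicity and type by log canonicity of $(Y_G,(\tfrac12+\epsilon_1)R+\epsilon_2F)$ and combining with~(i) and $K_{X_G}=\pi^*(K_{Y_G}+\tfrac12R)$ yields the finite list of Table~\ref{table:classification of singularities}.

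The hard part will be fact~(ii). Log canonicity of $(Y_G,(\tfrac12+\epsilon_1)R+\epsilon_2F)$ alone still permits, e.g., $R$ to develop an ordinary triple point on a branch fibre $Y_p$, whose double cover would be a simple elliptic — not Du~Val — singularity of $X_G$; this is exactly the behaviour produced by contracting a Kodaira fibre of type $I_0^*$, $IV^*$, $III^*$ or $II^*$, and excluding it is the content of the statement that $\mtc{C}$ acquires no orbifold structure over $U$. Translating ``no orbifold point of $\mtc{C}$ over $U$'' into the precise list of admissible curve singularities of $B$ on $Y_G$ is, I expect, the most delicate part of the argument. Once it is in hand, everything else is a finite (if somewhat lengthy) enumeration of simple plane-curve configurations on the $\bP^1$-bundle $Y_G$, organized by the way $R$ and $V$ meet the fibres.
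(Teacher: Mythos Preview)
Your overall strategy---pass to the quotient $Y_G$, use that $Y_G\to G$ is a $\bP^1$-bundle over $U=G\cap C^{\sm}$, and analyze the branch curve locally---is exactly the paper's approach. The local model $z^2=h(t,x)$ and fact~(i) (Du~Val on the cover $\Leftrightarrow$ simple curve singularity on the branch, with matching types) are used in the paper as well.

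The genuine gap is fact~(ii). First, its justification does not go through: Corollary~\ref{cor:fiber over the node} explicitly \emph{excludes finitely many} smooth points of $C^{\st}$, and it is precisely at those excluded points that the worst singularities of $R$ (and hence of $X_G$) can sit; so ``no orbifold structure of $\cC$ over $U$'' cannot be read off from that corollary, and the Weierstrass analogy you invoke is not established in this setting. Second, and more seriously, (ii) is \emph{false} as stated: Table~\ref{table:classification of singularities}, which the theorem points to, lists surface singularities of type $E_{6k+1}$ for $k\ge 2$ and $J_{k,0}$ for $k\ge 2$ (e.g.\ $x^2+y^3+yz^5$ and $x^2+y^3+yz^4$), and these are \emph{not} Du~Val. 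So an argument that forces $X_G$ to be Du~Val over $U$ would not reproduce the table and would in fact contradict the statement you are trying to prove.

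What the paper does instead is this. For the reduced-fibre bullet, it does not argue abstractly that the branch has simple singularities; it quotes the classification of admissible singular branch configurations carried out in \cite[\S4.2.1, Tables~2--3]{ISZ25} (these are exactly the curve singularities of $R$ compatible with $\bigl(Y,(\tfrac12+\epsilon_1)R+\epsilon_2 F\bigr)$ being slc) and then takes the double cover entry by entry to produce Table~\ref{table:classification of singularities}. For the non-reduced-fibre bullet, it avoids (ii) entirely and argues directly from log canonicity: writing $R|_G=F_p+\Theta$, one shows from $\bigl(Y,(\tfrac12+\epsilon)R\bigr)$ slc that $\Theta$ has at worst a double point at any $q\in F_p$, and if $\Theta$ is smooth at $q$ then $(\Theta\cdot F_p)_q\le 4$ because $\Theta$ is a $4$-section. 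A short case analysis (is $F_p$ the tangent line of $\Theta$ or not?) then yields $A_{2k-1}$ with $k\le 4$, $D_m$, or $E_7$; your enumeration here is correct, but you can obtain it without ever invoking (ii).
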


\begin{theorem}[Singularities over smooth locus II]\label{thm:Singularities over smooth locus II}
 With the notation as above, assume that the surface $X_{G^\nu}$ is non-normal. 
  \begin{itemize}
        \item If a singular fiber of $X_G \to G$ over $G \cap C^{\sm}$ is reduced, then it is either a banana curve, or a $k$-cycle for $k=3,4$.
        \item If $F_p$ is a non-reduced fiber of $X_G \to G$ over a point $p \in G \cap C^{\sm}$, then $F_p$ has multiplicity~$2$ and its reduced structure is a dumbbell curve. In this case, the corresponding singularity of $X_G$ is of type $A_{2k-1}$ (with $k \leq 2$), or of type $D_m$, or of type $E_7$.
    \end{itemize}
\end{theorem}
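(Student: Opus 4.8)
The plan is to reduce the statement, via the double cover $\pi\colon X\to Y$ introduced at the beginning of this section, to the classification of the quotient surfaces in \cite{ISZ25} together with a purely local analysis of double covers of smooth surfaces branched along a plane curve. First I would fix the model for the quotient: write $Y_{\mtc{G}}\to\mtc{G}$ for the quotient of $X_{\mtc{G}}\to\mtc{G}$ by the fiberwise involution and let $R_Y\subseteq Y_{\mtc{G}}$ be the branch divisor of $\pi$, so that $X_{\mtc{G}}\to Y_{\mtc{G}}$ is the double cover branched along $R_Y$, the map $\pi$ is \'etale away from $R_Y$, the divisor $R_Y$ is reduced, and $\pi^{*}R_Y=2R$ with $R$ the ramification divisor appearing in the theorem. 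By the stability equivalence recalled at the start of this section the pair $(Y_{\mtc{G}},\tfrac12 R_Y+\dots)$ is KSBA-stable, so $Y_{\mtc{G}}$ is the coarse space of a surface of type~I, II, or~III in the sense of Theorem~\ref{thm: main theorem 2 of ISZ25}. Since a double cover of a normal surface branched along a reduced divisor is normal and only type~III surfaces are non-normal, the hypothesis that $X|_{G^\nu}$ is non-normal forces $Y_{\mtc{G}}$ to be of type~III, i.e. over $G\cap C^{\sm}$ a gluing of two $\bP^1$-bundles $\bP_{\mtc{E}_i}(\cV)$ ($i=1,2$) along a section of each, with $R_Y$ pulling back to a degree-$2$ multisection on each $\bP_{\mtc{E}_i}(\cV)$. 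The general fiber of $Y_{\mtc{G}}$ is then $(\bP^1\cup_{\mathrm{node}}\bP^1,\,\tfrac12(p_1+\cdots+p_4))$ with two marked points on each component (the node being none of them, since an odd number of branch points on a $\bP^1$ is impossible), and taking the double cover of each $\bP^1$ branched at its two points and identifying the two pairs of preimages of the node shows that the general fiber of $X_{\mtc{G}}$ is a banana curve, as recorded in Theorem~\ref{thm:elliptic with a bisection moduli}.

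Next I would classify the singular fibers over $G\cap C^{\sm}$ by running the same local computation on the permitted degenerations of this dumbbell pair. By Corollary~\ref{cor:fiber over the node} and Remark~\ref{rmk_we_can_ignore_twisted_node}, at all but finitely many such points the fiber of $Y_{\mtc{G}}$ is the pullback of the universal family over $\sP_1^{\CY}$, hence still a dumbbell with $R_Y$ over four distinct points, and the fiber of $X_{\mtc{G}}$ is again a banana curve. At the remaining finitely many points, the slc and log Calabi--Yau constraints leave only: (i) two of the four points of $R_Y$ on one component of the dumbbell collide; (ii) $R_Y$ becomes tangent to, or singular along, the fiber; (iii) a component of $R_Y$ becomes a ruling of $Y_{\mtc{G}}\to\mtc{G}$. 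In the reduced cases (i)--(ii), each collision $p_i=p_j$ on a component replaces the double cover $\bP^1\to\bP^1$ (now branched along a length-$2$ subscheme) by a union of two $\bP^1$'s meeting at one point, inserting a node into the banana; since $R_Y$ has degree $4$ on a fiber there are at most two such collisions, and a collision occurring at the node is excluded because it would drop the arithmetic genus of the fiber to $0$. Hence a reduced singular fiber is a banana curve or a $3$- or $4$-cycle, as claimed.

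For the non-reduced fibers I would analyze case~(iii). When a component of $R_Y$ equals a ruling $\bP^1$ of $Y_{\mtc{G}}\to\mtc{G}$, the cover is totally ramified along it, so its preimage in $X_{\mtc{G}}$ is non-reduced of multiplicity~$2$; computing $\pi^{-1}$ of the dumbbell fiber along which $R_Y$ now contains a whole component, one finds that the reduced structure is a dumbbell curve. The surface singularities of $X_{\mtc{G}}$ are then read off from the local equation $z^2=f$ of $\pi$ at the points where the vertical part of $R_Y$ meets the horizontal multisection (of degree at most $2$). There $R_Y$ is a union of a ruling with a multisection, hence its plane singularity is always \emph{reducible}; this confines the singularity of the double cover to $A_n$ with $n$ odd (node or tacnode of two smooth branches), $D_m$ (a curve of type $A_{m-3}$ meeting a transverse smooth branch), and $E_7$ (a cusp meeting a smooth branch), and excludes $E_6$ and $E_8$, whose branch curves are \emph{irreducible}. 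Finally, because the horizontal part has degree at most $2$, it can meet a ruling transversally ($A_1$) or tangentially to order~$2$ ($A_3$) but no more, forcing the $A$-type to be $A_{2k-1}$ with $k\le2$. This is precisely the asserted list, and finiteness of $\pi$ together with the preservation of the slc property under finite covers shows there are no further singularities.

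The step I expect to be the main obstacle is the bookkeeping in case~(iii): within the type~III geometry one must enumerate all the ways a component of $R_Y$ can degenerate to a ruling, describe its contact with the horizontal multisection and with the double locus of $Y_{\mtc{G}}$, and match each configuration to the exact ADE type of $X_{\mtc{G}}$, confirming the bound $k\le2$, the absence of $E_6$ and $E_8$, and the appearance of $D_m$ for all $m$ — all while carrying along the orbifold structure of $\mtc{G}$, which by Remark~\ref{rmk_we_can_ignore_twisted_node} may be disregarded at the twisted nodes for the coarse-space statement.
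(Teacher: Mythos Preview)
Your overall strategy is the same as the paper's: reduce to the quotient surface $Y_G$ of type~III, read off the reduced singular fibers from the double covers of dumbbell curves, and handle the non-reduced fibers by analyzing the case where the branch divisor $R$ contains a ruling. Your treatment of case~(iii) matches the paper's Lemma~\ref{lem:at worst double point 2} and Proposition~\ref{prop:singularity of type III} quite closely, including the bound $k\le 2$ (coming from the horizontal part being a bisection) and the exclusion of $E_6$, $E_8$ (coming from reducibility of the branch curve at $q$). The ``main obstacle'' you flag is precisely what Lemma~\ref{lem:at worst double point 2} is for.

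There is, however, a genuine gap in your handling of the reduced fibers. You exclude the possibility that a branch point collides with the node of the dumbbell by claiming this ``would drop the arithmetic genus of the fiber to~$0$.'' This is false: Lemma~\ref{lem: special double cover of dumbbell 1} computes exactly this double cover and obtains a \emph{pinched banana curve}, which has arithmetic genus~$1$ (a tacnode, not a node, at the image of the dumbbell's node); Lemma~\ref{lem: special double cover of dumbbell 2} gives a four-leaf-clover curve when both pairs of points collide at the node. So your exclusion argument does not work, and you have not addressed what happens when $R$ meets the horizontal double locus $\Gamma$ of $Y_G$. The paper treats this separately: it shows (via the slc condition on $(Y_G^\nu,(\tfrac12+\epsilon)R+\Gamma)$) that $R$ and $\Gamma$ can only meet transversally on each component of the normalization, and then invokes Lemmas~\ref{lem: special double cover of dumbbell 1}--\ref{lem: special double cover of dumbbell 2} to identify the resulting fibers. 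You need either a correct argument that $R\cap\Gamma=\varnothing$ over $G\cap C^{\sm}$ (for instance via Corollary~\ref{cor:pure branch locus 2}, which places the branch locus inside $(Y|_{C^{\sm}})^{\sm}$), or to incorporate these extra fiber types into your list and then explain why they do not survive in the final KSBA-stable surface.

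A minor point: you describe the type~III surface as a gluing of \emph{two} $\bP^1$-bundles, but Theorem~\ref{thm: main theorem 2 of ISZ25}(3) allows a single $\bP^1$-bundle over a connected curve glued to itself via a base involution. This does not change the local analysis over $G\cap C^{\sm}$, but you should not build the argument on there being two distinct components.
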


\begin{theorem}[Singular fibers over nodes]\label{thm:Singular fibers over nodes}
      With the notation as above, a fiber of $X_G\to G$ over $G\cap C^{\sing}$ is one of the following:
\begin{itemize}
\item a reduced curve which is either an elliptic curve or a banana curve, or  
\item a non-reduced curve of multiplicity $m \in \{2,3,4, 6, 8\}$ whose reduced structure is $\bP^1$, or
\item a non-reduced curve of multiplicity $2$ whose reduced structure is an elliptic curve, or
    \item a non-reduced curve whose reduced structure is a banana curve, with multiplicities either $(1,2)$ or $(2,2)$ or $(2,4)$ or $(4,4)$ on its two components.
\end{itemize}
\end{theorem}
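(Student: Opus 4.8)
The plan is to pass to the quotient surface and compute the double cover fiber by fiber. Let $(X,\epsilon_1R+\epsilon_2F)$ be a pair parametrized by $\ove{\mtc{M}}^{\KSBA}_{\textup{ell},2}(\epsilon_1,\epsilon_2)$, and let $\pi\colon X\to Y$ be the quotient by the fiberwise involution, so that $\bigl(Y,(\tfrac12+\epsilon_1)R_Y+\epsilon_2F_Y\bigr)\to C$ is a pair of the kind studied in \cite{ISZ25} and $\pi$ is a double cover branched along a divisor $B_Y=R_Y+V$, with $V$ vertical (a union of fiber components of $g_Y\colon Y\to C$). Fix a component $G\subseteq C$, a point $p^\nu$ of its normalization $G^\nu$ lying over a node $p\in C^{\sing}$, and the corresponding point $\mtf p$ of the orbifold curve $\mtc G$ from Theorem~\ref{thm: main theorem 2 of ISZ25}. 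By Corollary~\ref{cor:fiber over the node} and Remark~\ref{rmk_we_can_ignore_twisted_node}, the pair $(Y_{\mtf p},\tfrac12R_Y|_{Y_{\mtf p}})$ is the pullback along a stable quasimap of a pair parametrized by $\sP_1^{\CY}$; in particular $R_Y$ cuts out four distinct points on the fiber over $p$. Up to the orbifold multiplicity coming from $\Aut_{\mtc G}(\mtf p)=\mu_m$, the reduced fiber of $Y|_{G^\nu}$ over $p^\nu$ is therefore $\bP^1$ carrying four of these points (when $Y|_{G^\nu}$ is of type~I or~II, cf.\ Corollary~\ref{cor:projective bundle over smooth locus}), or $\bP^1$ carrying two of them together with a node of $Y|_{G^\nu}$ meeting the double locus (the type~III case).

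Next I would compute the double cover. Restricting $\pi$ over $Y|_{G^\nu}$ and base-changing to $p^\nu$, the fiber of $X|_{G^\nu}$ is (the normalization of) the double cover of $m\cdot\bP^1$ along $B_Y$. Non-reducedness comes from two sources: the multiplicity $m$, which scales the fiber of $X$ by $m$; and the reduced fiber being contained in $\Supp V$, which forces the double cover over it to be non-reduced of multiplicity $2$ (locally the cover is $\{s^2=b\}$ with $b$ vanishing along the fiber). Combining these: if the reduced fiber is not in $B_Y$ and carries four $R_Y$-points, the cover is a smooth genus-one curve, so the fiber is an elliptic curve, or a multiplicity-$m$ curve with elliptic reduced structure; if it is not in $B_Y$ and carries two $R_Y$-points, the cover of $(\bP^1,\text{two points})$ is again $\bP^1$, and gluing it to the analogous component from the other side of the fiber — along the preimages of the relevant node, of which there are two precisely when that node is unbranched — produces a banana curve, possibly non-reduced if one side is doubled; and if the reduced fiber lies in $B_Y$, the cover is non-reduced of multiplicity $2m$ with reduced structure $\bP^1$.

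It remains to bound $m$ and run through the finitely many cases. The possible orbifold multiplicities are read off from the stacky structure of $\sP_1^{\CY}$ — whose only nontrivial stabilizers sit over the harmonic and equianharmonic pairs (contributing $\mu_2$ and $\mu_3$) and over the dumbbell pair — together with the representability of the quasimap $\mtc G\to\sP_1^{\CY}$; one finds that $m$ lies in a small explicit set, so $2m\in\{2,4,6,8\}$, while the value $3$ arises in the equianharmonic configuration, where the double cover of the fiber is already rational, giving multiplicity $m=3$ with reduced structure $\bP^1$. On the banana side the pair of component multiplicities is obtained from $(1,1)$ by doubling one or both components, yielding $(1,2)$, $(2,2)$, $(2,4)$, $(4,4)$. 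Together with the reduced elliptic and banana curves and the multiplicity-$2$ elliptic case, this is exactly the list in the statement.

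The principal obstacle is the bookkeeping in the last two steps: pinning down which vertical components enter $B_Y$ over a node, bounding the orbifold multiplicities, and ruling out the \emph{a priori} possible configurations that do not actually occur over a node (for instance a non-reduced fiber of multiplicity $\ge 3$ with elliptic reduced structure). Concretely, one must track the double-cover datum of $\pi$ through the \cite{ISZ25} machinery — the stable quasimap to $\sP_1$, the twisted base curve, and the canonical bundle formula — and in particular understand why the elliptic involution ``absorbs'' a factor of $\mu_2$, so that the larger automorphisms at the equianharmonic configuration descend to only $\mu_3$ on the quotient and multiplicities such as $12$ never occur. With those bounds in place, the classification follows from the finite case analysis above.
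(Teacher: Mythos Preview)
Your overall strategy---pass to the quotient $(Y,\tfrac12 R_Y)$, use Corollary~\ref{cor:fiber over the node} so that near the node the family is pulled back from a stable quasimap to $\sP_1$, then compute the double cover fiber by fiber---is exactly the paper's approach. The genuine gap is in how you obtain the extra factor of~$2$ that produces the multiplicities $6,8$ and $(2,4),(4,4)$.

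You attribute this doubling to a vertical part $V$ of the branch locus of $\pi\colon X\to Y$. But the branch locus of $\pi$ contains no fiber over a node: by Corollary~\ref{cor:fiber over the node} the divisor $R_Y$ restricts to four \emph{points} on $\cY_{\frak p}$, and Lemma~\ref{lem:F_not_double_locus} shows that any whole fiber contained in $R_Y$ lies over $C^{\sm}$. So your $V$ is empty over $C^{\sing}$, and the argument as written only reaches multiplicities $\le 4$.

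The paper's mechanism is different. By Lemma~\ref{lem: pure branch locus 1} there is a Weil divisor class $\cL$ on $\cY$ with $\cR\in|\cL^{\otimes 2}|$; over a node $\frak p$ either $\cL$ is Cartier or it has Cartier index~$2$. In the Cartier case $\cX\to\cY$ is generically \'etale along $\cY_{\frak p}$, and the fiber of $X_G$ over~$p$ has multiplicity $n=|\stab_{\cC}(\frak p)|$ with reduced structure the coarse quotient $E/\!\!/\bmu_n$ of the elliptic double cover $E$ of $(\cY_{\frak p},\tfrac12 \cR|_{\cY_{\frak p}})$. In the index-$2$ case a direct local computation (writing the $A_{2n-1}$-singularity of $\cY$ along the node explicitly) shows that the \emph{restricted} cover $\cX_i\to\cY_i$ on one branch of the node acquires $\cY_{i,\frak p}$ as additional ramification even though $\pi$ itself does not; this is what doubles the multiplicity. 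The dichotomy concerns how the $\bmu_2$-torsor data interacts with the nodal singularity of the ambient threefold, not vertical branching of~$\pi$.

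One smaller correction: the generic automorphism group in $\sP_1$ is the Klein four-group $V_4$, not the trivial group; the harmonic and equianharmonic points have stabilizers $D_4$ and $A_4$. Cyclic stabilizers of order~$2$ therefore occur at every $j$-invariant (the multiplicity-$2$ elliptic fiber arises here, from a $\bmu_2\subset V_4$ lifting to a fixed-point-free translation on~$E$), while orders $3$ and~$4$ are special. Representability of the quasimap gives $n\in\{1,2,3,4\}$, and the Cartier-index doubling then completes the list.
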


\begin{remark}\textup{
The preceding results should be understood as holding only in one direction. 
More precisely, we do not claim that all singularities listed above actually occur on surface pairs parametrized by $\overline{\mtc{M}}^{\KSBA}_{\textup{ell},2}(\epsilon_1,\epsilon_2)$. 
Rather, our claim is that any singularity appearing on a surface pair in $\overline{\mtc{M}}^{\KSBA}_{\textup{ell},2}(\epsilon_1,\epsilon_2)$ must be among those described in the previous results.}
\end{remark}

We now outline the strategy for proving the theorems stated above. We begin by recalling several results concerning KSBA-stable limits of surface pairs equipped with a $\boldsymbol{\mu}_2$-action; see Section~\S\ref{subsec:Cyclic covers of KSBA-stable pairs}. The proof of Theorem~\ref{thm:elliptic with a bisection moduli} follows immediately from the results of \cite{ISZ25}, reviewed in Section~\S\ref{sec:KSBA-moduli stacks for certain ruled surfaces}, together with some preparatory analysis of certain ramified covers of the surface pairs carried out in Section~\S\ref{subsec:Cyclic covers of KSBA-stable pairs}.

The classification of singular fibers requires more work. Theorem~\ref{thm:Singular fibers over nodes}, which treats singular fibers lying over the nodal locus of $C$, is proved in Section~\S\ref{subsec:Classification of fibers over nodal locus of $C$}. The analysis over the smooth locus of $C$ is divided into two cases, according to whether the ramification divisor contains a fiber; these are handled in Sections~\S\ref{subsec:Singular fibers over the smooth locus: no ramified fibers} and~\S\ref{subsec:Singular fibers over the smooth locus: with ramified fibers}, and together, these results complete the proof of Theorem~\ref{thm:Singularities over the Smooth Locus I} and Theorem~\ref{thm:Singularities over smooth locus II}.

\subsection{Cyclic covers of KSBA-stable pairs}\label{subsec:Cyclic covers of KSBA-stable pairs}
We now report two results on cyclic covers which we will use later, many of which are probably known to experts.

\begin{lemma}\label{lem:automorphism extends} 
    Let $({X},{D})\rightarrow \Spec A$ be a normal KSBA-stable family over the spectrum of a DVR $A$. Then any automorphism of the generic fiber $({X}_{\eta},{D}_{\eta})$ extends uniquely to an automorphism of $({X},{D})$ over $\Spec A$.  
\end{lemma}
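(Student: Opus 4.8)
The plan is to use separatedness of the KSBA moduli stack together with the properness of the relevant Hom/Isom schemes. The key observation is that an automorphism of $(X_\eta, D_\eta)$ over $\eta$ is the same data as an isomorphism between two families over $\Spec A$: namely the given family $(X, D) \to \Spec A$ and itself. So I would proceed as follows.

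First I would let $\sigma_\eta \colon (X_\eta, D_\eta) \to (X_\eta, D_\eta)$ be the given automorphism. Consider the two families $(X, D) \to \Spec A$ and $(X, D) \to \Spec A$ (the same family, viewed twice). Both are KSBA-stable extensions of $(X_\eta, D_\eta)$, but the second one I re-glue along $\sigma_\eta$: more precisely, $\sigma_\eta$ exhibits $(X_\eta, D_\eta)$ as isomorphic to itself, so we get two extensions of the \emph{same} generic fiber. By the separatedness (equivalently, the valuative criterion applied to the proper DM moduli stack $\overline{\mtc{M}}^{\KSBA}_{c_i,v}$ of Theorem in Section~\ref{sec:KSBA-moduli stacks for certain ruled surfaces}), the KSBA-stable limit is unique; hence there is an isomorphism $\sigma \colon (X, D) \to (X, D)$ over $\Spec A$ restricting to $\sigma_\eta$ on the generic fiber. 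This gives existence.

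For uniqueness, suppose $\sigma, \sigma'$ are two such extensions of $\sigma_\eta$. Then $\tau = \sigma^{-1} \circ \sigma'$ is an automorphism of $(X, D)$ over $\Spec A$ that is the identity on the generic fiber. Since $X$ is normal and $X_\eta$ is dense in $X$ (the family being flat and $\Spec A$ irreducible with dense generic point), $\tau$ agrees with the identity on a dense open, and by normality (separatedness of $X \to \Spec A$, with $X$ reduced) $\tau = \id$. Concretely, $\tau$ and $\id$ are two morphisms $X \to X$ over $\Spec A$ agreeing on the schematically dense open $X_\eta$; since $X$ is reduced and separated over $A$, they coincide. Hence $\sigma = \sigma'$.

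The main subtlety — though it is not really an obstacle — is justifying that an automorphism of the generic fiber genuinely produces a second KSBA-stable extension to which separatedness applies, i.e.\ that $\sigma_\eta$ should be read as an isomorphism of families over $\eta$ and then the valuative criterion for $\overline{\mtc{M}}^{\KSBA}_{c_i,v}$ (properness, hence separatedness as a DM stack) supplies the unique extension. One must also check that the extension $\sigma$ obtained is automatically an \emph{isomorphism} of pairs (not merely a morphism): this follows because its inverse is constructed the same way from $\sigma_\eta^{-1}$, and the composite $\sigma \circ \sigma^{-1}$ extends $\id_{X_\eta}$, so equals $\id_X$ by the uniqueness argument above. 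The normality hypothesis on $X$ is exactly what makes the rigidity step (density of the generic fiber plus reducedness) clean; without it one would have to worry about embedded or non-reduced behavior in the special fiber.
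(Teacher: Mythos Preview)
Your proof is correct and follows essentially the same idea as the paper: both rely on the separatedness of the KSBA moduli stack $\overline{\mtc{M}}^{\KSBA}$ as a DM stack. The paper packages this more concisely by observing that the automorphism scheme $\Aut(x)\to\Spec A$ of the $A$-point $x$ is finite (hence proper) over $\Spec A$, so existence and uniqueness both follow at once from the valuative criterion for properness of schemes, whereas you separate the two steps and handle uniqueness by a direct density/reducedness argument on $X$.
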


\begin{proof}This follows as the moduli stack $\overline{\cM}^{\KSBA}$ is separated.
    Indeed, let $x\in \overline{\mtc{M}}^{\KSBA}(R)$ the object corresponding to the family $({X},{D})\rightarrow \Spec A$. As $\overline{\mtc{M}}^{\KSBA}$ is separated, the inertia stack $\Aut(x)\rightarrow \Spec A$ is a proper algebraic space over $\Spec A$, which is a scheme finite over $\Spec A$ as $\overline{\mtc{M}}^{\KSBA}$ is Deligne-Mumford. Therefore, the assertion follows from the valuative criterion of the properness of schemes.
\end{proof}

\begin{corollary}\label{cor_to_take_ksba_limit_one_can_take_quotient_first}
    Let $({X},{D})\rightarrow \Spec A$ be a KSBA-stable family with normal generic fiber over the spectrum of a DVR $A$ and $\tau$ be an automorphism of $( {X}, {D})$ over $\Spec A$ of order $m$. Let \[\begin{tikzcd}[ampersand replacement=\&]
	{ {X}} \&\& { {Y}} \\
	\& {\Spec A}
	\arrow["\pi", from=1-1, to=1-3]
	\arrow[from=1-1, to=2-2]
	\arrow[from=1-3, to=2-2]
\end{tikzcd}\] be the quotient of $ {X}$ under the $\tau$-action, $ {D}_{ {Y}}$ be the a $\bQ$-divisor supported over the image of $ {D}$ under $\pi$ and such that for $U\subseteq Y$ a big open in $Y$ where $D_Y$ is Cartier, we have 
\[
\pi^*(D_Y)|_U = D|_{\pi^{-1}(U)}.
\]
Let finally $ {R}\subseteq  {Y}$ be the divisorial part of the ramification locus. Then $( {Y},\frac{m-1}{m} {R}+ {D}_{ {Y}})\rightarrow \Spec A$ is KSBA-stable. In particular, $ {R}$ does not contain any component of $ {Y}_0$.
\end{corollary}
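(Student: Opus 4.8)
The plan is to exhibit $(Y,\tfrac{m-1}{m}R+D_Y)$ as a crepant finite quotient of $(X,D)$ and to transport KSBA-stability through the quotient map $\pi\colon X\to Y$. First I would check that the total space $X$ is normal. As the family is KSBA-stable it is locally stable, so $(X,D+X_0)$ is slc; in particular $X$ is demi-normal and $X_0$ is reduced (see \cite{Kol23}). Since $X$ is flat over $A$, its irreducible components correspond to those of $X_\eta$, so $X$ is irreducible as $X_\eta$ is connected and normal. The non-normal locus of a demi-normal scheme has pure codimension one, and here it is contained in $X_0$ because $X_\eta$ is normal; it would therefore be a union of components of $X_0$. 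But at such a component, flatness of $X\to\Spec A$ forces $X_0$ to be non-reduced there, a contradiction. Hence $X$ is normal, $Y=X/\langle\tau\rangle$ is normal, and $\pi$ is a finite degree-$m$ morphism of normal varieties.

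Next I would record the ramification identity. Since $D$ is $\tau$-invariant and $K_X+D$ is $\mathbb{Q}$-Cartier, its class descends along the finite quotient $\pi$, and by Riemann--Hurwitz the descended $\mathbb{Q}$-divisor is precisely $K_Y+\tfrac{m-1}{m}R+D_Y$, so $\pi^*(K_Y+\tfrac{m-1}{m}R+D_Y)=K_X+D$. As $\pi$ is finite and surjective, $K_Y+\tfrac{m-1}{m}R+D_Y$ is $\mathbb{Q}$-Cartier and ample, its pullback being so. Finally, $\pi^*Y_0=X_0$ (both are cut out by the uniformiser of $A$), whence $\pi^*(K_Y+\tfrac{m-1}{m}R+D_Y+Y_0)=K_X+D+X_0$.

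The central step is to apply the standard fact that log canonicity is inherited by crepant finite quotients (see \cite{Kol23}) to the map $\pi\colon(X,D+X_0)\to(Y,\tfrac{m-1}{m}R+D_Y+Y_0)$: since $X$ is normal, $(X,D+X_0)$ being slc just means it is lc, so one concludes that $(Y,\tfrac{m-1}{m}R+D_Y+Y_0)$ is lc. The last assertion of the corollary then follows immediately: if a component $V$ of $Y_0$ were contained in $\Supp R$, its coefficient in $\tfrac{m-1}{m}R+D_Y+Y_0$ would be at least $\tfrac{m-1}{m}+1>1$, contradicting lc-ness; hence $R$ (and likewise $D_Y$) contains no component of $Y_0$, and both are flat over $A$. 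To conclude that $(Y,\tfrac{m-1}{m}R+D_Y)\to\Spec A$ is KSBA-stable, I would observe: the map is flat, since $Y$ is reduced, irreducible, and dominates $\Spec A$; the log canonical class is relatively ample and $\mathbb{Q}$-Cartier; the generic fibre $(X_\eta,D_\eta)/\langle\tau_\eta\rangle$ is KSBA-stable by the fibrewise version of the argument above; and the special fibre is KSBA-stable by inversion of adjunction, since $Y_0$ is Cartier, shares no component with the boundary, and $(Y,\tfrac{m-1}{m}R+D_Y+Y_0)$ is lc near $Y_0$.

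The step I expect to be the main obstacle is the handling of the special fibre. A priori one does not yet know that $\Supp R$ avoids the components of $Y_0$, so restricting the boundary to $Y_0$ --- or invoking inversion of adjunction on $Y$ --- is not licensed from the outset; the way around this is to put $Y_0$ into the boundary \emph{before} applying the quotient criterion, which is legitimate precisely because $\pi^*Y_0=X_0$ and $(X,D+X_0)$ is already slc, and only afterwards to extract the non-containment statement. A secondary technical point is the descent of the $\mathbb{Q}$-Cartier property along $\pi$, which genuinely uses that $\pi$ is a finite \emph{quotient} map and not just a finite morphism; and if $m$ is composite one must be mildly careful with the ramification indices entering the Hurwitz formula, an issue absent in the application, where $m=2$.
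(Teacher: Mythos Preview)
Your proof is correct and follows essentially the same route as the paper: establish the crepant identity $\pi^*(K_Y+\tfrac{m-1}{m}R+D_Y+Y_0)=K_X+D+X_0$, invoke the fact that log canonicity descends along finite crepant quotient maps (the paper cites \cite[Proposition~5.20]{KM98}, you cite \cite{Kol23}), and read off both the non-containment of $R$ in $Y_0$ and KSBA-stability. You are more explicit than the paper about why the total space $X$ is normal and about the $\bQ$-Cartier descent along the quotient map, which the paper handles by a citation to \cite{alexeev2012non}; otherwise the arguments coincide.
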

If $D$ has no component fixed by $\tau$, then one can take as $D_Y$ the image of $D$ by $\tau$.
\begin{proof}
    By the assumptions, we have that $ {Y}$ is normal, $K_{ {Y}}+\frac{m-1}{m} {R}+ {D}_{ {Y}}$ is $\bQ$-Cartier by \cite[Lemma 2.3]{alexeev2012non}, and $$\textstyle K_{ {X}}+ {D}\ =\ \pi^*(K_{ {Y}}+\frac{m-1}{m} {R}+ {D}_{ {Y}}),  \ \ \ \  {X}_0=\pi^* {Y}_0,$$ As $( {X}, {D})\rightarrow \Spec A$ is KSBA-stable, then by the inversion of adjunction, $( {X}, {D}+ {X}_0)$ is log canonical. By \cite[Proposition 5.20]{KM98}, it follows that $( {Y},\frac{m-1}{m} {R}+ {D}_{ {Y}}+ {Y}_0)$ is a log canonical pair whose log canonical divisor is ample over $\Spec A$. Therefore, components of $ {Y}_0$ are not contained in $\Supp  {R}$, and $( {Y},\frac{m-1}{m} {R}+ {D}_{ {Y}})\rightarrow \Spec A$ is KSBA-stable.
\end{proof}

We now record several results specific to our setting, which will be used later in the paper.

\smallskip
Let $(X_0,\epsilon_1R_0+\epsilon_2 F_0)$ be a KSBA-stable pair in $\overline{\mtc{M}}^{\KSBA}_{\textup{ell},2}(\epsilon_1,\epsilon_2)$. Take a DVR $A$ and take a general smoothing $( {X},\epsilon_1 {R}_{ {X}}+\epsilon_2 {F}_{ {X}})\rightarrow \Spec A$ of $(X_0,\epsilon_1R_0+\epsilon_2 F_0)$. Let $( {Y},(\frac{1}{2}+\epsilon_1) {R}+\epsilon_2 {F})\rightarrow \Spec A$ be the quotient induced by a bisection of the generic fiber $ {X}_{\eta}$. Then by Theorem \ref{thm: main theorem of ISZ25} the threefold $ {Y}$ admits a morphism with purely 1-dimensional fibers to a family of at worst nodal curves $ {C}\rightarrow \Spec A$, whose generic fiber $ {C}_{\eta}$ is smooth.

\begin{lemma}\label{lem: pure branch locus 1}
    With the same notations as before, there exists a big open subscheme $C^\circ \subseteq {C}$ such that the ramification locus of ${X} \rightarrow {Y}$ over ${C}^\circ$ is divisorial, i.e.\ every irreducible component has codimension $1$, and is contained in $({Y}|_{{C}^\circ})^{\mathrm{sm}}$. In particular, there exists a line bundle $\mtc{L}^\circ$ on $ {Y}|_{ {C}^\circ}$ such that $ {R}|_{ {C}^\circ}$ is a section of $(\mtc{L}^{\circ})^{\otimes2}$.
\end{lemma}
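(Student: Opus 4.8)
Here is the plan. The idea is to take $C^\circ$ to be the complement in $C$ of a suitable \emph{finite} set of closed points, and then to verify both assertions fiber by fiber over $C^\circ$, the crucial point being that after this deletion $C^\circ$ is a regular surface over which $Y$ is flat.

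First I would fix the set $\Sigma$ of points to delete. Since $(X,\dots)\to\Spec A$ is a general smoothing, the elliptic surface $X_\eta\to C_\eta$ has only reduced (in fact at worst nodal) fibers, and the generic fiber of each component of $X_0\to C_0$ is reduced as well (being a localization of the reduced scheme $X_0$); hence $X\to C$ has a non-reduced fiber over only finitely many closed points. Let $\Sigma\subseteq C$ be the finite set consisting of the nodes of the fibers of $C\to\Spec A$, the points over which $X\to C$ has a non-reduced fiber, and the finitely many exceptional points of Corollary~\ref{cor:fiber over the node}; set $C^\circ:=C\smallsetminus\Sigma$. As $\Sigma$ is finite and $\dim C=2$, the open subscheme $C^\circ$ contains every codimension-one point of $C$, so it is big; moreover $C^\circ\to\Spec A$ is now a family of smooth curves over a DVR, so its total space $C^\circ$ is a regular scheme.

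Next I would analyze $\pi\colon X\to Y$ over $C^\circ$. By Corollary~\ref{cor:fiber over the node} together with a direct inspection of the double cover $X_p\to Y_p$ — the $g^1_2$-cover of a reduced genus-one curve (possibly nodal) — its branch locus over $C^\circ$ is exactly $R|_{C^\circ}$, the closure of $R_\eta$, and on each fiber it is supported at \emph{smooth} points of $Y_p$ (for $Y_p\simeq\bP^1$ this is immediate, and when $Y_p$ is a nodal union of two $\bP^1$'s the node of $Y_p$ turns out to be unramified). Since this branch locus is finite over $C^\circ$ — it meets each fiber in finitely many points, and a degree-two cover of a curve is never branched along the whole curve — $R|_{C^\circ}$ is a divisor, and the ramification locus of $\pi$ over $C^\circ$ is $\pi^{-1}(R|_{C^\circ})$, again a divisor with no vertical or isolated extra components. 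It remains to see $R|_{C^\circ}\subseteq(Y|_{C^\circ})^{\mathrm{sm}}$. As $Y_0$ is demi-normal, $Y$ is Cohen–Macaulay, so $Y|_{C^\circ}\to C^\circ$ has one-dimensional fibers over the regular base $C^\circ$ and is therefore flat by miracle flatness; then for $q\in R|_{C^\circ}$ over $p\in C^\circ$ the ring $\mathcal O_{Y,q}$ is flat over the regular ring $\mathcal O_{C^\circ,p}$ with fiber $\mathcal O_{Y_p,q}$ a discrete valuation ring, hence $\mathcal O_{Y,q}$ is regular. Finally, $Y|_{C^\circ}$ being regular along $R|_{C^\circ}$ and $\pi$ being étale off $R|_{C^\circ}$, the anti-invariant summand of $\pi_*\mathcal O_{X|_{C^\circ}}$ is an invertible sheaf $(\mathcal L^\circ)^{-1}$ on $Y|_{C^\circ}$, and the algebra structure $\pi_*\mathcal O_{X|_{C^\circ}}=\mathcal O_{Y|_{C^\circ}}\oplus(\mathcal L^\circ)^{-1}$ exhibits $R|_{C^\circ}$ as the zero locus of a section of $(\mathcal L^\circ)^{\otimes2}$.

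The main obstacle, I expect, is precisely keeping the ramification divisorial and inside the smooth locus: a priori $\tau$ could acquire isolated fixed points or fixed fiber-components over $C^\circ$ (which would be codimension two), and the total space $Y$ really can be singular along vertical curves — for instance along the non-normal locus when $Y|_G$ is a surface of type~III. Both dangers are controlled by the fiberwise picture — over $C^\circ$ the fibers of $X\to C$ are reduced genus-one curves and the $g^1_2$-cover is branched at finitely many smooth points of $Y_p$, away from its nodes — and it is exactly here that deleting the finitely many nodes of $C$ matters, since only over the regular base $C^\circ$ does flatness of $Y|_{C^\circ}$ promote "smooth point of the fiber $Y_p$" to "smooth point of the threefold $Y$."
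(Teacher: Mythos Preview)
Your overall strategy—delete finitely many points to make $C^\circ$ regular, then analyze fiberwise and use miracle flatness to promote ``smooth point of the fiber $Y_p$'' to ``smooth point of the threefold $Y$''—is sound and in fact parallels the paper's proof closely. The miracle flatness step is a clean way to handle regularity of $Y$ along $R$, and is arguably tidier than the paper's appeal to purity in the $\bP^1$-fiber case.

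However, there is a genuine gap in your treatment of the dumbbell case. You assert that ``when $Y_p$ is a nodal union of two $\bP^1$'s the node of $Y_p$ turns out to be unramified,'' justified by ``direct inspection of the double cover $X_p\to Y_p$—the $g^1_2$-cover of a reduced genus-one curve.'' But \emph{reduced and arithmetic genus one} is not enough: a pinched banana curve (two $\bP^1$'s meeting at a tacnode) is reduced of arithmetic genus one, and it arises as a degree-$2$ cover of a dumbbell curve \emph{ramified at the node} (this is exactly the content of Lemma~\ref{lem: special double cover of dumbbell 1} later in the paper). So your hypothesis on $X_p$ does not exclude ramification at the node, and your $\Sigma$ as defined does not remove the problematic points.

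The fix is exactly what the paper does: use that $X_0$ is slc, hence demi-normal, so $X_0$ has at worst nodal singularities in codimension one. This forces the generic fiber of each $X_0|_G\to G$ to be at worst \emph{nodal}, not merely reduced. A double cover of the dumbbell ramified at the node is never nodal at the preimage (it is a tacnode or worse), so the node cannot lie in the branch locus generically along $\Gamma$; hence $\Gamma$ is not a component of the branch locus, and after deleting the finitely many points of $R\cap\Gamma$ you are done. If you enlarge $\Sigma$ to include the finitely many points over which $X\to C$ has a worse-than-nodal fiber (finite by demi-normality), your argument goes through.
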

\begin{proof}
    Since ${C} \to \Spec A$ is a family of nodal curves with smooth geometric generic fiber, it suffices to work over the smooth locus $ {C}^{\mathrm{sm}}$ of $ {C} \to \Spec A$. 
    
    Let $p \in  {C}^{\mathrm{sm}}$ be a point. If the fiber of $ {Y} \to  {C}$ over $p$ is $\mathbb{P}^1$, then in a neighborhood $U$ of $p$ the morphism $ {Y}|_U \to U$ is a $\mathbb{P}^1$–fibration. By purity of the branch locus \cite[Tag~0BMB]{stacks-project}, the ramification locus over $U$ is divisorial and lies in $( {Y}|_U)^{\mathrm{sm}}$.

    Now assume the fiber of $ {Y} \to  {C}$ over $p$ is a dumbbell curve, defined in Definition \ref{def_dumbell_curve}. By Theorem~\ref{thm: main theorem of ISZ25}, there exists an irreducible component $G \subseteq  {C}_0^{\mathrm{sm}}$
    such that $ {Y}|_G \to G$ is a fibration whose general fiber is a dumbbell curve. We claim that the node of a general fiber of $ {Y}|_G \to G$ is not contained in the branch locus. 

    Indeed, a double cover of a dumbbell curve $\Gamma$ that is branched at the node together with four additional points (two on each component) is never nodal; its normalization acquires worse-than-nodal singularities at the preimage of the node. Since by assumption $ {X}|_G \to G$ has generically at worst nodal singularities, the double cover cannot branch at the node. Thus the branch locus avoids the nodes of fibers, and hence is contained in the smooth locus of $ {Y}|_G$.

    Combining the two cases gives the desired big open subset $ {C}^\circ$.
\end{proof}
The following is a lemma which will be useful for the remaining part of the paper.
\begin{lemma}\label{lem:isom in codim 1 implies isom}
Let $X$ and $X'$ be $S_2$ varieties, and let $f \colon X \to C$ and $f' \colon X' \to C$ be projective morphisms to a base scheme $C$ with pure-dimensional fibers. Assume that there is a birational map 
\[
\pi \colon X \dashrightarrow X'
\]
over $C$ which is an isomorphism over $C^{\circ} := C \setminus Z$, where $Z$ is a closed subscheme of codimension $\geq2$. Suppose further that $L$ and $L'$ are $\bQ$-Cartier $\bQ$-divisors on $X$ and $X'$, respectively, which are relatively ample over $C$, and that 
\[
L|_{f^{-1}(C^{\circ})} = L'|_{f'^{-1}(C^{\circ})}
\]
under the identification induced by~$\pi$. Then $X$ and $X'$ are isomorphic over $C$, and under this isomorphism $L$ corresponds to $L'$.
\end{lemma}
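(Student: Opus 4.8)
The plan is to recover both $X$ and $X'$ as relative $\Proj$'s of section algebras which, by the $S_2$ hypothesis, are determined by their restriction over the big open $C^\circ$. First I would make the harmless reduction that $f$ and $f'$ are surjective (replace $C$ by the common closure of the two images, which coincide because $\pi$ is birational over $C$), and record a codimension estimate: since the fibers of $f$ are pure of some fixed dimension $d$ and $Z$ has codimension $\ge 2$ in $C$, every component of $f^{-1}(Z)$ has dimension at most $\dim Z + d \le \dim X - 2$, so $W := f^{-1}(Z)$ has codimension $\ge 2$ in $X$, and likewise $W' := f'^{-1}(Z)$ in $X'$. Writing $U := f^{-1}(C^\circ) = X\setminus W$ and $U' := f'^{-1}(C^\circ) = X'\setminus W'$, the hypothesis says that $\pi$ restricts to an isomorphism $\pi_0\colon U \xrightarrow{\sim} U'$ over $C^\circ$ with $\pi_0^*(L'|_{U'}) = L|_U$, and the open immersions $j_X\colon U\hookrightarrow X$, $j_{X'}\colon U'\hookrightarrow X'$ have complements of codimension $\ge 2$.

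Next I would choose $N>0$ with $NL$ and $NL'$ Cartier, and (after enlarging $N$) relatively very ample, so that the relative version of Serre's theorem — that a relatively ample line bundle recovers a scheme as the $\Proj$ of its section algebra, obtained by passing to a Veronese to achieve generation in degree one — gives $X = \Proj_C \bigoplus_{m\ge 0} f_*\mathcal{O}_X(mNL)$ with $\mathcal{O}(1) = \mathcal{O}_X(NL)$, and symmetrically $X' = \Proj_C\bigoplus_{m\ge 0} f'_*\mathcal{O}_{X'}(mNL')$ with $\mathcal{O}(1) = \mathcal{O}_{X'}(NL')$. The heart of the argument is then a codimension-$2$ extension statement: because $mNL$ is Cartier and $X$ is $S_2$, the line bundle $\mathcal{O}_X(mNL)$ is an $S_2$ sheaf, so $\mathcal{O}_X(mNL) \to (j_X)_*\mathcal{O}_U(mNL|_U)$ is an isomorphism across $W$; pushing forward along $f$ and using $f\circ j_X = j\circ (f|_U)$ with $j\colon C^\circ\hookrightarrow C$, this gives $f_*\mathcal{O}_X(mNL) \xrightarrow{\sim} j_*\big((f|_U)_*\mathcal{O}_U(mNL|_U)\big)$. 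These isomorphisms are compatible with the multiplication maps, so summing over $m$ they assemble into an isomorphism of graded $\mathcal{O}_C$-algebras between $\bigoplus_m f_*\mathcal{O}_X(mNL)$ and $j_*$ of the relative section algebra of $NL|_U$ on $U$; the same holds verbatim on the $X'$ side.

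To conclude, I would feed in $\pi_0$: since it is an isomorphism $U\xrightarrow{\sim}U'$ over $C^\circ$ matching $L|_U$ with $L'|_{U'}$, it identifies the two relative section algebras over $C^\circ$, hence their $j_*$-extensions, hence $\bigoplus_m f_*\mathcal{O}_X(mNL) \cong \bigoplus_m f'_*\mathcal{O}_{X'}(mNL')$ as graded $\mathcal{O}_C$-algebras. Applying the functor $\Proj_C$ yields an isomorphism $X\cong X'$ over $C$, and since the isomorphism of algebras respects the grading it carries the twisting sheaf $\mathcal{O}(1) = \mathcal{O}_X(NL)$ to $\mathcal{O}_{X'}(NL')$, so $L$ corresponds to $L'$. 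I expect the middle paragraph to be the main obstacle — making the $S_2$-extension precise, verifying its compatibility with the algebra structure, and pairing it correctly with the $\Proj$ reconstruction — and the technical point that makes it go through is insisting that $NL$ be genuinely Cartier (not merely $\bQ$-Cartier) on all of $X$, so that $\mathcal{O}_X(mNL)$ is an honest $S_2$ sheaf rather than just a reflexive one. A minor but genuine point to check is that it is the \emph{pure-dimensionality} of the fibers (not just projectivity of $f$) that upgrades $\codim(f^{-1}(Z),X)$ from $\ge 1$ to $\ge 2$.
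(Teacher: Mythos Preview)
Your proposal is correct and follows essentially the same approach as the paper: reconstruct $X$ and $X'$ as relative $\Proj$'s of their section algebras $\bigoplus_m f_*\mathcal{O}_X(mNL)$ and $\bigoplus_m f'_*\mathcal{O}_{X'}(mNL')$, use the $S_2$ hypothesis together with the codimension estimate on $f^{-1}(Z)$ (which relies on pure-dimensionality of the fibers) to see that these algebras are determined by their restrictions over $C^\circ$, and then identify the restrictions via $\pi$. Your write-up is in fact more careful than the paper's---you spell out the $j_*$-extension and flag the need for $NL$ to be genuinely Cartier---but the architecture is identical.
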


\begin{proof}
Choose an integer $r>0$ such that $rL$ and $rL'$ are Cartier. For every integer $m\ge 0$ set
\[
\mathcal{A}_m := f_*\mathcal{O}_X(mrL), \qquad 
\mathcal{A}'_m := f'_*\mathcal{O}_{X'}(mrL').
\]
Since $mrL$ and $mrL'$ are relatively ample over $C$, the graded $\mathcal{O}_C$-algebras
\[
\mathcal{A} := \bigoplus_{m\ge 0} \mathcal{A}_m, 
\qquad
\mathcal{A}' := \bigoplus_{m\ge 0} \mathcal{A}'_m
\]
are finitely generated, and we have isomorphisms
\[
X \simeq \Proj_C(\mathcal{A}), \qquad 
X' \simeq \Proj_C(\mathcal{A}'),
\]
with $\mathcal{O}_{\Proj}(1)$ corresponding to $rL$ and $rL'$, respectively. Over $C^\circ$, the birational map $\pi$ is an isomorphism, and by assumption
\[
L|_{f^{-1}(C^\circ)} \simeq L'|_{f'^{-1}(C^\circ)}.
\]
Hence for every $m\ge 0$ we obtain an isomorphism of $\mathcal{O}_{C^\circ}$-modules
\[
\mathcal{A}_m|_{C^\circ} \;\simeq\; \mathcal{A}'_m|_{C^\circ}.
\] We may assume that $C$ is affine. As $\codim_XZ\geq 2$ and $f$ has pure dimensional fibers, and as $$\Gamma(\mtc{A}_m,C)= H^0(X,mr{L})= H^0(X|_{C^\circ},mr{L}|_{C^\circ})=H^0(X'|_{C^\circ},mr{L}'|_{C^\circ})=\Gamma(\mtc{A}'_m,C),$$ the isomorphism over $C^\circ$ extends uniquely to an isomorphism $\mathcal{A}_m \xrightarrow{\ \simeq\ } \mathcal{A}'_m.$ These isomorphisms are compatible with the natural multiplication maps, and therefore assemble to an isomorphism of graded $\mathcal{O}_C$-algebras $
\mathcal{A} \;\simeq \; \mathcal{A}'$. Taking relative $\Proj$ over $C$ then gives the desired isomorphism $X\simeq X$, and under this isomorphism, the class of $rL$ corresponds to the class of $rL'$. Hence $L$ corresponds to $L'$ as $\mathbb{Q}$-Cartier $\mathbb{Q}$-divisors.
\end{proof}

\begin{corollary}\label{cor:pure branch locus 2}
    In Lemma~\ref{lem: pure branch locus 1}, one can take $ {C}^\circ =  {C}^{\mathrm{sm}}$.
\end{corollary}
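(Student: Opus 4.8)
The plan is to upgrade Lemma~\ref{lem: pure branch locus 1} to $C^\circ=C^{\mathrm{sm}}$ by a local analysis at the finitely many points of $C^{\mathrm{sm}}$ missed by $C^\circ$. Since $C^\circ$ is a big open of $C$ which we may assume is contained in $C^{\mathrm{sm}}$, and $\dim C=2$, the set $Z:=C^{\mathrm{sm}}\setminus C^\circ$ is finite, so it suffices to show, near each $p\in Z$, that the ramification locus of $X\to Y$ is a divisor disjoint from $\Sing(Y|_{C^{\mathrm{sm}}})$; the statement about the line bundle $\mathcal{L}$ then follows formally, since over the complement of this ramification divisor the cover $\pi$ is \'etale. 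First I would dispose of the case in which $Y$ is regular near the fiber $Y_p$: this holds whenever $Y_p$ is irreducible --- in particular over every component of $C_0$ of type~I or~II, by Corollary~\ref{cor:projective bundle over smooth locus} --- and then purity of the branch locus (\cite[Tag~0BMB]{stacks-project}) gives the claim at once, exactly as in the proof of Lemma~\ref{lem: pure branch locus 1}.

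So suppose $p$ lies on a component $G$ of type~III. By Theorem~\ref{thm: main theorem 2 of ISZ25} the surface $Y|_{G^\nu}$ is the gluing of two projective bundles along the section $N$ traced out by the nodes of the dumbbell fibers, and $\Sing(Y|_{C^{\mathrm{sm}}})$ is contained in $N$. It therefore suffices to prove that the branch divisor $\overline{R}$ --- the closure in $Y|_{C^{\mathrm{sm}}}$ of $R|_{C^\circ}$ --- is disjoint from $N$ and that $X\to Y$ is \'etale over a neighborhood of $N$; purity applied over $Y|_{C^{\mathrm{sm}}}\setminus N$ then identifies the ramification with $\overline{R}\subseteq (Y|_{C^{\mathrm{sm}}})^{\mathrm{sm}}$. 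To get at $X\to Y$ near $N$, I would extend the double-cover datum $(\mathcal{L}^\circ,s^\circ)$ of Lemma~\ref{lem: pure branch locus 1} to a reflexive rank-one sheaf $\mathcal{L}$ with a section $s$ on $Y|_{C^{\mathrm{sm}}}$ (possible since $C^\circ$ is big open in the normal scheme $Y|_{C^{\mathrm{sm}}}$), form the associated double cover $X^\dagger\to Y|_{C^{\mathrm{sm}}}$, and apply Lemma~\ref{lem:isom in codim 1 implies isom} with $L$ and $L'$ the pullbacks to $X|_{C^{\mathrm{sm}}}$ and to $X^\dagger$ of a divisor relatively ample over $C^{\mathrm{sm}}$: these agree over $C^\circ$, so the lemma provides an isomorphism $X|_{C^{\mathrm{sm}}}\simeq X^\dagger$ over $Y|_{C^{\mathrm{sm}}}$. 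Hence $X|_{C^{\mathrm{sm}}}\to Y|_{C^{\mathrm{sm}}}$ is the double cover cut out by $(\mathcal{L},s)$, and the task reduces to ruling out that it is branched along --- or, because of a singularity of $Y$ along $N$, ramified over --- a point of $N$: by the tangency computation recalled in the proof of Lemma~\ref{lem: pure branch locus 1}, such a cover of the dumbbell $Y_p$ would violate the semi-log-canonicity of $(X_0,\epsilon_1R_0+\epsilon_2F_0)$, a contradiction. This forces $\overline{R}\cap N=\emptyset$ and $\mathcal{L}$ invertible near $N$, which finishes the argument.

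The main obstacle is precisely this last point: extending the ``generic'' conclusion of Lemma~\ref{lem: pure branch locus 1} across the finitely many special fibers of a type~III component. The subtlety is that the potential bad locus $\overline{R}\cap N$ has codimension two, so the codimension-one demi-normality of $X_0$ does not suffice to exclude it by itself; one genuinely needs the identification of $X|_{C^{\mathrm{sm}}}$ with the explicit double cover $X^\dagger$ via Lemma~\ref{lem:isom in codim 1 implies isom}, together with the failure of semi-log-canonicity for a double cover branched at a dumbbell node. The remaining ingredients --- the reduction to finitely many points, purity in the regular case, and the compatibility of the pulled-back polarizations over $C^\circ$ --- are routine.
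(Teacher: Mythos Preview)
Your core idea coincides with the paper's: construct an explicit double cover over $C^{\mathrm{sm}}$ and identify it with $X$ via Lemma~\ref{lem:isom in codim 1 implies isom}. The paper's proof does exactly and only this, in three lines: it forms the double cover of $Y|_{C^{\mathrm{sm}}}$ branched along $R|_{C^{\mathrm{sm}}}$, observes that the preimage of $R$ is relatively ample, and invokes Lemma~\ref{lem:isom in codim 1 implies isom}. There is no case split by type, and no separate argument excluding intersection with the dumbbell-node locus.

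Your ``step 4'' is where the proposal goes wrong. You try to prove $\overline{R}\cap N=\emptyset$, but this is both stronger than what is needed and in tension with the paper itself: the singular locus of the \emph{threefold} $Y|_{C^{\mathrm{sm}}}$ is at most a finite set of points on $N$ (the total space of a one-parameter family of dumbbell curves is generically smooth along the locus of nodes), so $\operatorname{Sing}(Y|_{C^{\mathrm{sm}}})\subsetneq N$, and you are conflating the two. Indeed, later in the paper (the propositions on type~III components in \S\ref{subsec:Singular fibers over the smooth locus: no ramified fibers}) the case $R\cap\Gamma\neq\emptyset$ is analyzed and yields pinched-banana or four-leaf-clover fibers, so $R$ meeting $N$ is not excluded. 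Your justification---that branching at the dumbbell node forces a failure of semi-log-canonicity of $(X_0,\epsilon_1R_0+\epsilon_2F_0)$---is not supported by the computation you cite: the argument in Lemma~\ref{lem: pure branch locus 1} shows non-\emph{nodality} of the fiber, which constrains the generic fiber over a type~III component but says nothing about slc-ness of the surface at an isolated special fiber. Dropping step~4 entirely and stopping after the identification $X|_{C^{\mathrm{sm}}}\simeq X^\dagger$ (which already shows the ramification is the Weil divisor $R$) brings your argument in line with the paper's.
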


\begin{proof}
    This follows from Lemma~\ref{lem:isom in codim 1 implies isom}.  
    Indeed, consider the double cover of $ {Y}|_{ {C}^{\mathrm{sm}}}$ branched along 
    $ {R}|_{ {C}^{\mathrm{sm}}}$.  
    By construction, the inverse image of $ {R}|_{ {C}^{\mathrm{sm}}}$ is relatively ample over 
    $ {C}^{\mathrm{sm}}$. Hence the resulting double cover satisfies the hypotheses of 
    Lemma~\ref{lem:isom in codim 1 implies isom}, and must therefore be isomorphic to 
    $ {X}$ over $ {C}^{\mathrm{sm}}$.  
    This shows that no further shrinking of the base is necessary, so one may take 
    $ {C}^\circ =  {C}^{\mathrm{sm}}$.
\end{proof}

\subsection{Classification of fibers on double covers: preparation}\label{subsec:Classification of fibers on double covers: preparation}

One of the main goals of this section is to classify the singular fibers that appear by taking the double cover of a surface pair $(Y,\frac{1}{2}R)$ such that $(Y,(\frac{1}{2}+\epsilon_1)R + \epsilon_2F)$ is a surface pair in $\overline{\cM}^{\KSBA}(\epsilon_1,\epsilon_2)$. As preparation, we first classify certain ramified double covers of curves.

\begin{defn}\label{def_dumbell_curve}
A \emph{dumbbell curve} is the nodal union of two smooth rational curves meeting at a single point. A \emph{twisted dumbbell curve} is a twisted curve $\mtc{C}$ whose coarse space is a dumbbell curve $C$, and which has a unique stacky point lying over the node of $C$ with stabilizer $\mu_2$. The stabilizer preserves both components and acts on each component via $z \mapsto z^2$. 
\end{defn}

\noindent Equivalently, a twisted dumbbell curve is obtained by taking the nodal union of the two $\mu_2$–root stacks of $\bP^1$ at $\infty$, and gluing them along their common closed substack $\mtc{B}\mu_2$.

\begin{defn}
    A reduced projective curve $C$ of arithmetic genus $1$ is called 
    \begin{itemize}
        \item a \emph{banana curve} if $C=C_1\cup C_2$ is the nodal union at two points of two smooth rational curves;
        \item a \emph{pinched banana curve} if $C=C_1\cup C_2$ is the union of two smooth rational curves at a singular point, which is a tacnode of $C$;
        \item an \emph{$n$-cycle}, for $n\geq 3$, if $C=C_1\cup \cdots\cup C_n$ is the nodal union of $n$ smooth rational curves such that $$C_i\cap C_j\ =\ \begin{cases}
            \textup{singleton}, & |i-j|=1\\
            \emptyset, & |i-j|\geq 2
        \end{cases}$$ for any $i,j=1,...,n$ where $C_{n+1}:=C_2$ and $C_{n+2}:=C_{2}$.
       \item A \emph{four-leaf-clover curve} if \(C = C_{1} \cup_{p} C_{2} \cup_{p} C_{3} \cup_{p} C_{4}\) is the union of four smooth rational curves meeting at a common point \(p\), each pair of components intersecting transversely, and such that the embedded dimension of \(C\) at \(p\) is \(3\). Equivalently, \(C\) is isomorphic to \(\bV(xy,\, z(x+y+z)) \subseteq \bP^{3}\).
    \end{itemize}
\end{defn}

The next two lemmas will be used later in the classification of the singular fibers, whose proofs involve only simple computations.

\begin{lemma}
    The double cover of $\bP^1$ branched along a divisor $D$ of degree $4$ is one of the following:
    \begin{itemize}
        \item a smooth elliptic curve, if $\Supp D$ consists of four distinct points;
        \item an irreducible nodal genus one curve, if $\Supp D$ consists of three distinct points;
        \item an irreducible cuspidal genus one curve, if $\Supp D$ consists of two distinct points, one of which has multiplicity $3$;
        \item a banana curve, if $\Supp D$ consists of two distinct points, each of which has multiplicity $2$;
        \item a pinched banana curve, if $\Supp D$ consists of a single point.
    \end{itemize}
\end{lemma}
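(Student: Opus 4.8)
\textit{Proof plan.} The plan is to make the cover completely explicit as a plane curve $\{w^2=F(x)\}$ and then perform a short local computation at each point of $\Supp D$; the five cases of the statement correspond exactly to the five multiplicity patterns $(1,1,1,1),(2,1,1),(3,1),(2,2),(4)$ of an effective degree-$4$ divisor on $\bP^1$. First I would fix coordinates: since $\Supp D$ is finite, choose an affine coordinate $x$ with $\infty\notin\Supp D$, so that $D$ is the divisor of zeros of a polynomial $F\in k[x]$ of degree exactly $4$, unique up to a nonzero scalar. As $\Pic(\bP^1)$ is torsion-free, the only $L$ with $L^{\otimes2}\simeq\cO_{\bP^1}(D)\simeq\cO_{\bP^1}(4)$ is $L=\cO_{\bP^1}(2)$, so the double cover is $\pi\colon C=\Spec_{\bP^1}(\cO_{\bP^1}\oplus L^{-1})\to\bP^1$ with multiplication given by the section cutting out $D$; in the chosen chart $C=\{w^2=F(x)\}\subseteq\mathrm{Tot}(\cO_{\bP^1}(2))$, glued at infinity by $x=1/x'$, $w=w'/x'^2$ to $\{w'^2=x'^4F(1/x')\}$. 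Two facts hold uniformly and would be recorded at the start: $C$ is a Cartier divisor on the smooth surface $\bP(\cO_{\bP^1}\oplus\cO_{\bP^1}(2))$, hence Cohen--Macaulay, and it is generically reduced because $\pi$ is étale over $\bP^1\setminus V(F)$, so $C$ is reduced; and $\chi(\cO_C)=\chi(\cO_{\bP^1})+\chi(\cO_{\bP^1}(-2))=1-1=0$ with $H^0(\cO_C)=k$, so $C$ is connected of arithmetic genus $1$. Thus in all cases $C$ is a reduced, connected projective curve of arithmetic genus one, and only its components and singularities remain to be identified.

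Second, I would distinguish reducible from irreducible. Since $C$ is reduced and finite of degree $2$ over $\bP^1$, it is reducible precisely when it is the union of two sections $w=\pm t(x)$; squaring gives $F=t^2$, so this happens exactly when every root of $F$ has even multiplicity. For $\deg F=4$ this means $\Supp D$ is either two points of multiplicity $2$ or a single point of multiplicity $4$ — the last two cases of the lemma — and then $C=\{w=t\}\cup\{w=-t\}$ is the (nodal or tacnodal) union of two copies of $\bP^1$. In the remaining three cases $F$ is not a perfect square, so $C$ is integral of arithmetic genus one, and I only need to locate its single singular point.

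The local analysis is then routine. If $F$ vanishes to order $m$ at $x=a$, then with $u=x-a$ one has $w^2=u^m c(u)$ with $c(0)\neq0$; as $\Char k=0$, the series $c(u)^{1/m}$ lies in $k[[u]]$ with nonzero constant term, so $v:=u\,c(u)^{1/m}$ is a local coordinate and the germ of $C$ at this point is analytically $\{w^2=v^m\}$ — a smooth point for $m=1$, a node for $m=2$, a cusp for $m=3$, a tacnode for $m=4$ — while over every point not in $\Supp D$ (including $\infty$) the map $\pi$ is étale or simply branched, so $C$ is smooth there. Feeding in the five multiplicity patterns yields, in order: $C$ smooth, hence a smooth elliptic curve; $C$ integral with a single node; $C$ integral with a single cusp; $C=\{w=\pm(x-a)(x-b)\}$, two $\bP^1$'s meeting at the two nodes lying over $x=a$ and $x=b$, i.e.\ a banana curve; and $C=\{w=\pm(x-a)^2\}$, two $\bP^1$'s meeting at the single tacnode over $x=a$, i.e.\ a pinched banana curve — exactly matching the statement.

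I expect no serious obstacle. The only steps requiring care are the behavior over $\infty$, dealt with once and for all by the coordinate choice, and — in the two reducible cases — checking that the two components are genuinely smooth rational curves meeting exactly at the asserted points with the asserted intersection behavior, which the explicit sections $w=\pm t$ together with the substitution $v=u\,c(u)^{1/m}$ settle directly. As a final sanity check I would confirm the outcome against $p_a(C)=1$: a node and a cusp each have $\delta$-invariant $1$ (matching a rational normalization of an integral curve), and the tacnode has $\delta$-invariant $2$ (matching two rational components glued at a point of $\delta=2$).
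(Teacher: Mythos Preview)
Your proof is correct and is precisely the kind of explicit local computation the paper has in mind; the paper itself omits the argument, stating only that the lemma follows from ``simple computations.'' Your presentation via the affine model $w^2=F(x)$, the reducibility criterion $F=t^2$, and the local normal form $w^2=v^m$ is the standard and expected route.
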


\begin{lemma}
    Let $C=C_1\cup C_2$ be a dumbbell curve, and $D\subseteq C^{\sm}$ be a divisor on $C$ such that $\deg D|_{C_i}=2$ for $i=1,2$. Then the double cover of $\bP^1$ branched along a divisor $D$ of degree $4$ is one of the following:
    \begin{itemize}
    \item a banana curve, if $\Supp D$ consists of four distinct points;
    \item a 3-cycle, if $\Supp D$ consists of three distinct points;
    \item a 4-cycle, if $\Supp D$ consists of two distinct points;
    \end{itemize}
\end{lemma}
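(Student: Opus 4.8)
The plan is to reduce the double cover of the reducible curve $C = C_1 \cup C_2$ (with $C_i \cong \bP^1$) to the double covers of its two components, and then glue those along the two points lying over the node $n := C_1 \cap C_2$. Since $\deg D|_{C_i} = 2$ and $D$ is supported away from $n$, there is a line bundle $L$ on $C$ with $L|_{C_i} \cong \mathcal O_{\bP^1}(1)$ and $L^{\otimes 2} \cong \mathcal O_C(D)$, unique up to isomorphism because restriction identifies $\Pic C$ with $\Pic C_1 \times \Pic C_2$ ($C$ having a single separating node). The double cover is then $\pi \colon \widetilde C := \Spec_C(\mathcal O_C \oplus L^{-1}) \to C$, with algebra structure induced by the section of $L^{\otimes 2}$ cutting out $D$. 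Restricting $\pi$ over $C_i$ gives the double cover $\widetilde C_i := \pi^{-1}(C_i)$ of $\bP^1$ branched along $D|_{C_i}$, and one has $\widetilde C = \widetilde C_1 \cup \widetilde C_2$ with $\widetilde C_1 \cap \widetilde C_2 = \pi^{-1}(n)$.

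The first substantive step is the local analysis of $\pi$ near $n$. As $D$ avoids $n$, the cover is étale there; trivializing $L$ near $n$ and writing $\mathcal O_{C,n} \cong k[x,y]/(xy)$, the cover becomes $\Spec\bigl(k[x,y]/(xy)[t]/(t^2 - s)\bigr)$ for a unit $s$ with $s(n) = 1$. Over each of the two branches $s$ admits a square root, so there are two sheets over the $x$-branch and two over the $y$-branch; at $x = y = 0$ one has $t = \pm 1$, and the sheet through $(0,0,+1)$ on one side is glued to the sheet through $(0,0,+1)$ on the other, and likewise for $-1$. Hence $\pi^{-1}(n)$ consists of exactly two points, each an honest node of $\widetilde C$, and $\widetilde C$ is obtained from $\widetilde C_1$ and $\widetilde C_2$ by identifying them at two pairs of points, each pair consisting of one point of $\widetilde C_1$ over $n$ and one point of $\widetilde C_2$ over $n$. (Both a priori admissible pairings will give isomorphic curves below, so the precise identification is immaterial.)

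Next I would compute the $\widetilde C_i$. A degree-$2$ effective divisor on $\bP^1$ is either $p + q$ with $p \ne q$ or $2p$: the double cover branched along $p + q$ is again $\bP^1$, while the one branched along $2p$ is the nodal union of two copies of $\bP^1$ meeting at the single point over $p$ (a dumbbell curve), each component mapping isomorphically to the base. Because $\Supp D|_{C_1}$ and $\Supp D|_{C_2}$ are disjoint (their only possible common point would be $n \notin \Supp D$), the number of distinct points of $\Supp D$ is the sum of those of the $D|_{C_i}$, each equal to $1$ or $2$. Thus $\#\Supp D = 4$ forces $\widetilde C_1 \cong \widetilde C_2 \cong \bP^1$; $\#\Supp D = 3$ forces one $\widetilde C_i \cong \bP^1$ and the other a dumbbell curve; and $\#\Supp D = 2$ forces both $\widetilde C_i$ to be dumbbell curves.

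Finally I would glue in each case and read off $\widetilde C$: two copies of $\bP^1$ identified at two points form a banana curve; $\bP^1$ identified with the two endpoints of a chain $\bP^1 \cup \bP^1$ closes into a triangle, i.e.\ a $3$-cycle; and two such chains identified at their endpoints close into a $4$-cycle. In each case one checks directly that the other admissible pairing of the gluing points yields an isomorphic curve, using that $\Aut(\bP^1)$ acts $3$-transitively, so the residual ambiguity from the local analysis does not affect the conclusion. The one step that requires genuine care is the local picture at the node — that an étale double cover of a separating node splits into two sheets over each branch and produces exactly two nodes upstairs; once this is in hand, everything else is bookkeeping of components and their points of intersection.
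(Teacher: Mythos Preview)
Your proof is correct and complete. The paper does not actually prove this lemma; it simply states that the proof ``involves only simple computations,'' so there is no argument in the paper to compare against. Your approach---restricting the cyclic-cover construction to each component, analyzing the \'etale double cover of the node to see two nodes upstairs, and then gluing---is exactly the kind of computation the authors have in mind, and you have carried it out carefully. The observation that the two possible pairings over the node yield isomorphic curves is a nice point that resolves the only potential ambiguity.

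One small remark: the statement as written in the paper says ``the double cover of $\bP^1$,'' which is evidently a typo for ``the double cover of $C$''; you have correctly interpreted and proved the intended statement.
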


\begin{lemma}\label{lem: special double cover of dumbbell 1}
Let $C = C_1 \cup_{p} C_2$ be a dumbbell curve with the node $p$. Let $D \subset C$ be a Cartier divisor such that for $i = 1,2$ one has $\deg(D|_{C_i}) = 2$ and $\Supp(D|_{C_i}) = \{p, p_i\}$, where $p_i\in C_i\setminus\{p\}$. Then the double cover of $\bP^1$ branched along $D$ is a pinched banana curve.
\end{lemma}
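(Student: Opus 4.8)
\emph{Plan.} The whole content of the lemma is the local analytic type of the double cover $\pi\colon\widetilde C\to C$ over the node $p$; away from $p$ the cover is transparent. First I would record the global picture: since $\mathcal O_C(D)$ has bidegree $(2,2)$ along $(C_1,C_2)$ and $\Pic(C)\simeq\Pic(C_1)\times\Pic(C_2)\simeq\bZ\times\bZ$, it has a (unique) square root $L$, and $\widetilde C=\Spec_C(\mathcal O_C\oplus L^{-1})$ with algebra structure given by a section $s\in H^0(C,L^{\otimes 2})$ with $\divisor(s)=D$. Restricting to each $C_i\simeq\bP^1$, the induced double cover $\widetilde C_i:=\pi^{-1}(C_i)\to C_i$ is branched along $D|_{C_i}=p+p_i$, a reduced degree-$2$ divisor; hence $\widetilde C_i$ is a connected double cover of $\bP^1$ ramified at two points, so $\widetilde C_i\simeq\bP^1$, and $\pi$ is ramified over $p$, so $\widetilde C_i\cap\pi^{-1}(p)$ is a single reduced point. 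Since $C_1\cap C_2=\{p\}$ and $\pi$ is finite, $\widetilde C_1$ and $\widetilde C_2$ meet only over $p$.

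Next I would carry out the local analysis at $p$. Working in $\mathcal O_{C,p}\simeq k[[x,y]]/(xy)$ with $C_1=\{y=0\}$ and $C_2=\{x=0\}$, the hypothesis $\divisor(D|_{C_i})=p+p_i$ says that a local equation $f$ of $D$ at $p$ restricts to a uniformizer on each branch. The map sending $x$ to (the power series giving) $f|_{C_1}$ and $y$ to $f|_{C_2}$ is a well-defined automorphism of $k[[x,y]]/(xy)$, because the product of an element of $(x)$ with an element of $(y)$ vanishes, and it is invertible since $f$ has nonzero linear terms on both branches; under it $f$ becomes $x+y$. Eliminating $y=t^2-x$ from $\mathcal O_{C,p}[t]/(t^2-f)$ then gives
\[
\widehat{\mathcal O}_{\widetilde C,\bar p}\ \simeq\ k[[x,t]]\big/\bigl(x(t^2-x)\bigr),
\]
which is the union of the line $\{x=0\}$ and the parabola $\{x=t^2\}$: two smooth branches tangent to first order at the origin, i.e.\ a tacnode ($A_3$). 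Moreover these two branches are exactly the local pieces of $\widetilde C_2$ and $\widetilde C_1$.

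Finally I would assemble the pieces: $\widetilde C=\widetilde C_1\cup\widetilde C_2$ is the union of two smooth rational curves meeting only at the single point $\bar p$ over $p$, and at $\bar p$ the curve $\widetilde C$ has a tacnode; hence $\widetilde C$ is a pinched banana curve. As a sanity check, $\chi(\mathcal O_{\widetilde C})=\chi(\mathcal O_{\widetilde C_1})+\chi(\mathcal O_{\widetilde C_2})-\length(\widetilde C_1\cap\widetilde C_2)=1+1-2=0$, so $p_a(\widetilde C)=1$, consistent with the definition of a pinched banana curve.

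The main obstacle is the local step: one must see that a Cartier divisor on a nodal curve that restricts to the reduced node on each branch is forced into the ``diagonal'' length-two structure $(x+y)$, and that its double cover is a tacnode rather than a node or a worse singularity. This is a short computation once set up correctly, but it is the crux; the choice of square root $L$ and the identification of the two components with $\bP^1$ are then routine.
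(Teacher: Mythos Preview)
Your proof is correct and follows essentially the same approach as the paper: a local computation at the node showing that the double cover acquires a tacnode (you normalize the local equation of $D$ to $x+y$ and eliminate to get $x(t^2-x)=0$, while the paper uses $x-y$ and obtains $y(y+w^2)=0$), combined with the observation that each $\widetilde C_i$ is a $\bP^1$ double-covering $C_i\simeq\bP^1$ branched at two points. Your write-up is more careful in justifying the normal form of the local equation and adds a pleasant $p_a$ sanity check, but the argument is the same.
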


\begin{proof}
    One may verify the claim by writing a local equation near the node $p$.  
    Locally at $p$, the dumbbell curve $C$ is given in $\bA^2$ by the equation $xy = 0$, and the divisor $D$ is \'{e}tale locally the restriction of the line $\bV(x-y)$. Hence the double cover of $C$ branched along $D$ is locally given by
    \[
        \Spec k[x,y,w]/(w^2 - (x-y),\, xy) \ = \   \Spec k[y,w]/\bigl(y(y+w^2)\bigr),
    \]
    which is the standard local equation of a tacnode. Away from $p$, the cover is \'{e}tale over each component $C_i$ except at the two branch points $\{p,q_i\}$; thus its normalization restricts to a double cover of $\bP^1$ branched over two distinct points, which is again isomorphic to $\bP^1$.  Therefore, the resulting curve is a pinched banana curve.
\end{proof}

\begin{lemma}\label{lem: special double cover of dumbbell 2}
    Let $C = C_{1} \cup_{p} C_{2}$ be a dumbbell curve. 
    Let $D \subset C$ be a Cartier divisor such that for $i = 1,2$ one has 
    $\deg(D|_{C_{i}})=2$ and $\Supp(D|_{C_{i}})=\{p\}$. 
    Then the double cover of $C$ branched along $D$ is a four-leaf-clover curve.
\end{lemma}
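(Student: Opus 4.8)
The plan is to describe the double cover $\wt C \to C$ one component of $C$ at a time, and then to compute its complete local ring at the unique point over the node $p$; the statement will then follow directly from the definition of a four-leaf-clover curve.

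First I would look over $C_1 \simeq \bP^1$. Since $D|_{C_1} = 2[p]$, the section of $\mathcal{O}_{C_1}(D|_{C_1}) \simeq \mathcal{O}_{\bP^1}(2)$ cutting it out is the square $s_1^2$ of a section $s_1$ of $\mathcal{O}_{\bP^1}(1)$ vanishing simply at $p$. Hence, in a local trivialization, the cover over $C_1$ is $\{w^2 = s_1^2\} = \{w = s_1\} \cup \{w = -s_1\}$: two copies of $\bP^1$, each mapping isomorphically onto $C_1$ and meeting transversally at the single point $q$ lying over $p$ (where $s_1 = 0$). The same holds over $C_2$. Since $\Supp D = \{p\}$, the cover $\wt C \to C$ is étale away from $p$, so $\wt C$ is smooth there; combined with the local description at $q$ below this shows that $\wt C$ is reduced and that it is the union of four smooth rational curves — two over $C_1$ and two over $C_2$ — all passing through $q$ and meeting one another only at $q$. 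Finally, choosing a square root $\mathcal{L}$ of $\mathcal{O}_C(D)$ on $C$ (which exists, since $\Pic C \simeq \bZ^{\oplus 2}$ with $\mathcal{O}_C(D)$ of bidegree $(2,2)$), we have $\pi_* \mathcal{O}_{\wt C} = \mathcal{O}_C \oplus \mathcal{L}^{-1}$, and the normalization sequence of the dumbbell curve gives $\chi(\mathcal{L}^{-1}) = \chi(\mathcal{O}_{\bP^1}(-1)) + \chi(\mathcal{O}_{\bP^1}(-1)) - 1 = -1$; therefore $\chi(\mathcal{O}_{\wt C}) = 1 + (-1) = 0$, so $\wt C$ has arithmetic genus $1$.

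Next comes the local analysis at $q$. Write $\wh{\mathcal{O}}_{C,p} \simeq k[[x,y]]/(xy) \simeq k[[x]] \times_k k[[y]]$; a local equation $f$ for the Cartier divisor $D$ is a non-zero-divisor with $f|_{y=0} = x^2 u(x)$ and $f|_{x=0} = y^2 v(y)$ for units $u,v$. Because $\Char k = 0$ we may extract square roots of $u$ and $v$ and change the coordinates $x$ and $y$ independently on the two branches (an automorphism of the node) so that $f = x^2 + y^2$, whence
\[
\wh{\mathcal{O}}_{\wt C, q} \ \simeq\ k[[x,y,w]]/\bigl(xy,\ w^2 - x^2 - y^2\bigr).
\]
Since $xy = 0$ here, $x^2 + y^2 = (x+y)^2$, and the invertible linear substitution $X = 2x$, $Y = 2y$, $Z = w - x - y$ carries the ideal $\bigl(XY,\ Z(X+Y+Z)\bigr)$ onto $\bigl(xy,\ (w - x - y)(w + x + y)\bigr) = \bigl(xy,\ w^2 - x^2 - y^2\bigr)$. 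Hence
\[
\wh{\mathcal{O}}_{\wt C, q} \ \simeq\ k[[X,Y,Z]]/\bigl(XY,\ Z(X+Y+Z)\bigr),
\]
the complete local ring of $\bV(xy,\,z(x+y+z))$ at its singular point. In particular $\wt C$ has embedded dimension $3$ at $q$ (both defining relations lie in the square of the maximal ideal), and its four branches at $q$ — with tangent lines $w = x$, $w = -x$ in the plane $y = 0$ and $w = y$, $w = -y$ in the plane $x = 0$ — have pairwise distinct tangent directions, so the four components meet pairwise transversally at $q$. Together with the previous paragraph, $\wt C$ is a reduced curve of arithmetic genus $1$ which is the union of four smooth rational curves meeting at a single point with embedded dimension $3$ there, pairwise transversally; i.e.\ $\wt C$ is a four-leaf-clover curve (and the substitution above exhibits it, near the singular point, as isomorphic to $\bV(xy,\,z(x+y+z))$).

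The substantive step is the local computation at $q$. The point to be careful about is that, although the scheme-theoretic preimage $\wt C \times_C C_i$ is non-reduced at $q$ (its fiber over $p$ is $\Spec k[w]/(w^2)$), the total space $\wt C$ is nevertheless reduced; and it is the fact that the branch data $D|_{C_i} = 2[p]$ is a perfect square $s_i^2$ — a consequence of $p$ being a reduced point — that causes $\wt C$ to split into four components rather than two. Once one has the normal form $w^2 = x^2 + y^2$ over the node, the identification of $\wh{\mathcal{O}}_{\wt C, q}$ reduces to the elementary congruence $x^2 + y^2 \equiv (x+y)^2 \pmod{xy}$, and the remaining checks (reducedness off $q$, the Euler-characteristic count, transversality) are routine.
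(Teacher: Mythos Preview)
Your proof is correct and follows the same approach as the paper's: compute the complete local ring of the cover at the point over the node and identify it with $k[[x,y,z]]/(xy,\,z(x+y+z))$ (the paper takes the local equation of $D$ to be $(x+y)^2$ directly, which is the same element as your $x^2+y^2$ modulo $xy$). You are more careful in justifying the normal form for the local equation of $D$ and in checking the global conditions in the definition of a four-leaf-clover curve (four smooth $\bP^1$'s meeting only at $q$, arithmetic genus $1$), but the core argument is identical.
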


\begin{proof}
    The same argument as in the proof of Lemma~\ref{lem: special double cover of dumbbell 1} applies.  
    Locally at $p$, the curve $C$ is given by the node $xy=0$, and the divisor $D$ is the
    restriction of the non-reduced line $\bV((x+y)^{2})$.  
    Therefore, the local equation of the double cover is
    \[
        \Spec k[x,y,z]/(z^{2}-(x+y)^{2},\,xy)
        \;=\;
        \Spec k[x,y,z]/\bigl((z-(x+y))(z+(x+y)),\,xy\bigr).
    \]
    This exhibits four smooth branches meeting transversely at the origin in 
    embedding dimension~$3$, hence the double cover is a four-leaf-clover curve.
\end{proof}

\subsection{Classification of fibers over nodal locus of $C$}\label{subsec:Classification of fibers over nodal locus of $C$}

Let $(Y,(\frac{1}{2}+\epsilon_1)R+\epsilon_2F)\in \overline{\mtc{M}}^{\KSBA}(\epsilon_1,\epsilon_2)$ be a stable pair, and assume it is the central fiber of a one-parameter family over the spectrum of a DVR $A$ \[\textstyle \left(\cY,\left(\frac{1}{2}+\epsilon_1\right)\cR+\epsilon_2\cF\right) \ \longrightarrow\  \cC \longrightarrow \   \spec A\] fibered over a twisted curve $\cC$, whose generic fiber is a surface ruled over a smooth curve $\cC_\eta$. To get the corresponding family of elliptic surfaces, one takes the double cover ramified over $\cR$, which we denote by $\cX\to \cY$.
In this subsection, we consider a node $\frak{p}\in \cC_0$ (resp. $p\in C_0$) in the central fiber of $\cC\to \spec A$ (resp $C\to \spec A$, where $\cC\to C$ is the coarse space map), and we study the fibers $\cX_p$ and $\cX_{\frak{p}}$.
By Lemma \ref{lem: pure branch locus 1}, there exists a $\bQ$-Cartier Weil divisor $\mtc{L}$ on $\mtc{Y}$ with Cartier index $\leq 2$ such that $\mtc{R}$ is a section of $\mtc{L}^{\otimes 2}$. Observe that $\mtc{R}$ is a Cartier divisor on $\cY$, so there are two possibilities: either $\cL$ is Cartier over $p$, or it is not Cartier but $\cL^{\otimes 2}$ is Cartier. We will study these two cases separately.

\begin{notation}\textup{
Let $\mtc{G}_1$ and $\mtc{G}_2$ be the two irreducible components of $\cC_0$  passing through $p$ in the \'{e}tale topology\footnote{Here $p$ could be a node of a single irreducible component in the Zariski topology of $C$, so \'etale locally the node has two branches}. Similarly, let $\cY_1$ and $\cY_2$ denote the surfaces over $\cG_1$ and $\cG_2$, and similarly with $\cX_i$. Denote by $G_1,G_2,X_1,X_2,Y_1,Y_2$ the corresponding coarse spaces. The fiber of $\cY_i$ over $\mtf{p}$, denoted by $\mtc{Y}_\mtf{p}$, is either $\bP^1$ or a dumbbell curve by Corollary \ref{cor:fiber over the node} and Remark \ref{rmk_we_can_ignore_twisted_node}.
}\end{notation}


\subsubsection{$\cL$ is Cartier}
In this case, we have line bundles $\cL_i$ on $\cY_i$ with a section $\cR_i:=\cR|_{\cY_i}$ of $\cL_i^{\otimes 2}$. So we can take the double cover of $\cY_i$ ramified over $\cR_i$, this agrees with $\cX_i$. 

\begin{prop}
    With the previous notations, we have the following.
    \begin{enumerate}
        \item If $\cY_\frak{p}\simeq \bP^1$, then the fiber $(X_i)_p$ has multiplicity $|\stab_{\cC}(\frak{p})|$. The reduced structure of the fiber is a quotient of an elliptic curve by a (possibly trivial) cyclic group.
        \item If $\cY_{\frak p}$ is a twisted dumbbell curve, then the fiber $(X_i)_{\frak p}$ has either
\begin{itemize}
  \item an irreducible component of multiplicity $2$ or $4$ with reduced structure $\bP^1$, or
  \item two irreducible components of multiplicities $(1,2)$ or $(2,2)$, whose reduced structure is a dumbbell curve.
\end{itemize}

    \end{enumerate}
    
\end{prop}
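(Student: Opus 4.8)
\textbf{Strategy and local model.} The plan is to work on an \'etale neighborhood of $\mathfrak{p}$, present everything as an explicit $\mu_m$-quotient with $m=|\stab_{\cC}(\mathfrak{p})|$, and read off $(X_i)_p$ from the central fiber of the corresponding double cover over $\bA^1$. First, using Corollary~\ref{cor:fiber over the node} together with Remark~\ref{rmk_we_can_ignore_twisted_node} (so that one may work with $\sP_1^{\CY}$ and disregard twisted nodes), on an \'etale neighborhood of $\mathfrak{p}$ the pair $(\cY_i,\cR_i)\to\cG_i$ is pulled back from the universal family over $\sP_1^{\CY}$. I would present $\cG_i$ near $\mathfrak{p}$ as the root stack $[\bA^1_x/\mu_m]$, so that its coarse space is $G_i=\Spec k[t]$ with $t=x^m$, and (using that $\cL$ is Cartier here) write $\cY_i=[\tilde Y/\mu_m]$, $\cX_i=[\tilde X/\mu_m]$, where $\tilde Y\to\bA^1_x$ is the $\mu_m$-equivariant family of curves parametrized by $\sP_1^{\CY}$ ($\mu_m$ acting on $\bA^1_x$ with weight one), $\tilde X\to\tilde Y$ is the double cover branched along the $\mu_m$-equivariant divisor $\tilde R$, and $E:=\tilde X|_{x=0}$ is the central fiber. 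By Corollary~\ref{cor:fiber over the node} the divisor $\tilde R|_{\tilde Y|_{x=0}}$ is supported on four distinct points lying on the smooth locus (Lemma~\ref{lem: pure branch locus 1}, with two points on each component in the reducible case), so the lemmas of Section~\ref{subsec:Classification of fibers on double covers: preparation} identify $E$ as a smooth elliptic curve in case~(1) and a banana curve in case~(2); in both cases $E$ is reduced, as $\tilde X\to\bA^1_x$ is flat.

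\textbf{The coarse fiber, and case~(1).} Since $t=x^m$, base change along $\{t=0\}\hookrightarrow G_i$ gives $\tilde X\times_{G_i}\Spec k=\tilde X|_{\{x^m=0\}}=mE$ as a Cartier divisor on $\tilde X$ (the $m$-th infinitesimal thickening of the reduced divisor $E=\{x=0\}$). As $\mu_m$ is linearly reductive, taking invariants commutes with this base change, so
\[
(X_i)_p=\bigl(\tilde X|_{\{x^m=0\}}\bigr)/\mu_m,\qquad (X_i)_{p,\mathrm{red}}=E/\mu_m,
\]
the quotient of a genus-one curve by the cyclic group $\mu_m$. Moreover $\mu_m$ acts faithfully on $\tilde Y|_{x=0}$ --- otherwise $\cY_{\mathfrak{p}}=[\tilde Y|_{x=0}/\mu_m]$ would be a gerbe, contradicting that it is a curve parametrized by $\sP_1^{\CY}$ --- hence faithfully, thus generically freely, on $E$. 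So $\tilde X\to X_i$ is \'etale at a general point of $E$ with reduced fiber $E/\mu_m$ there, and pulling back a local equation $\varpi$ of $E/\mu_m$ gives a unit times $x$; since $t=x^m$ comes from the base, $t$ is a unit times $\varpi^m$ along $E/\mu_m$, so $(X_i)_p$ has multiplicity exactly $m$ there. As $(X_i)_p$ is a Cartier divisor on the $S_2$ surface $X_i$ and $E/\mu_m$ is irreducible in case~(1), this gives $(X_i)_p=m\cdot(E/\mu_m)$ with $m=|\stab_{\cC}(\mathfrak{p})|$ and reduced structure a cyclic quotient of an elliptic curve, which is~(1). (This is exactly Kodaira's picture of the multiple fibers of an elliptic fibration over an orbifold base.)

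\textbf{Case~(2), and the main obstacle.} In case~(2), $E=E_1\cup E_2$ is a banana curve, and one additionally has to track the stacky node of the dumbbell fiber $\cY_{\mathfrak{p}}$, whose $\mu_2$ (Definition~\ref{def_dumbell_curve}) interacts with the $\mu_m$-action at the two nodes of $E$. I expect that, running the same base-change-then-quotient computation componentwise and using Lemmas~\ref{lem: special double cover of dumbbell 1} and~\ref{lem: special double cover of dumbbell 2} to control the cover near the node, one gets: either $\mu_m$ identifies the two nodes of $E$ in the quotient, so that $(X_i)_p$ has reduced structure a dumbbell curve with components of multiplicities $(1,2)$ or $(2,2)$, or the quotient collapses onto a single $\bP^1$ with multiplicity $2$ or $4$; the remaining numerical possibilities should be excluded using that $(X_i)_p$ is a fiber of the KSBA-stable --- hence demi-normal and equidimensional --- family $\cX\to\cC$. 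The hard part will be to pin down exactly which weights of the $\mu_m$-action, and of its interaction with the node's $\mu_2$, are simultaneously compatible with faithfulness and with demi-normality of the stable limit, thereby ruling out every multiplicity other than those listed; case~(1), by contrast, is entirely routine.
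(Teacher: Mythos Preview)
Your treatment of part~(1) is correct and is essentially the paper's argument, phrased in terms of an explicit $\bmu_m$-chart rather than the fiber products $\cY\times_\cG\cB\bmu_n$ and $\cG\times_G\{p\}$ that the paper uses; both routes extract faithfulness from representability of $\cG\to\sP_1$ and then read off the multiplicity $m$ and the reduced structure $E/\!\!/\bmu_m$.

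For part~(2), however, you have not identified the mechanism that actually pins down the multiplicities. The decisive observation is that, writing $\cY\times_\cG\cB\bmu_n=[C/\bmu_n]$ with $C$ the (untwisted) dumbbell curve, the group $\bmu_n$ must embed into $\Aut(C,R)$, where $R$ consists of two \emph{distinct} points on each branch (Corollary~\ref{cor:fiber over the node}). Each branch of $C$ is a $\bP^1$ carrying three marked points (the node plus two points of $R$), so the only way $\bmu_n$ can act nontrivially on a fixed branch is via the unique involution; this forces $n\le 2$ whenever $\bmu_n$ preserves the components, and the case analysis ``$\bmu_n$ acts trivially on one branch and nontrivially on the other'' versus ``$\bmu_n$ acts nontrivially on both'' immediately yields the multiplicities $(1,2)$ and $(2,2)$. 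The possibility that $\bmu_n$ permutes the two branches transitively is what produces a single $\bP^1$ in the quotient with multiplicity $2$ or $4$. No appeal to demi-normality of the stable limit is needed to exclude further cases.

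Two of your proposed tools are red herrings. First, Lemmas~\ref{lem: special double cover of dumbbell 1} and~\ref{lem: special double cover of dumbbell 2} concern double covers of a dumbbell \emph{branched through the node}; but here the branch divisor avoids the node of $C$ (this is exactly what Corollary~\ref{cor:fiber over the node} and Lemma~\ref{lem: pure branch locus 1} guarantee), so those lemmas do not enter. Second, you worry about the interaction of $\bmu_m$ with the $\bmu_2$ at the stacky node of the twisted dumbbell, but Remark~\ref{rmk_we_can_ignore_twisted_node}, which you already invoked, lets you replace the twisted dumbbell by its coarse space $C$ before taking the $\bmu_n$-quotient; there is no residual $\bmu_2$ to track.
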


\begin{proof} Let $\bmu_n=\stab_{\cC}(\frak{p})$.
    We begin with the case when $\mtc{Y}_{\mtf{p}}\simeq \bP^1$. As $\cL$ is Cartier, the map $\cX\to \cY$ is generically \'etale over $\cY_p$, and thus the multiplicities of the fibers $(X_i)_p$ and $(Y_i)_p$ agree. Indeed, we can write $\cX$ as \[
    \spec_{\cY}(\cO_{\cY}\oplus \cL^{-1})
    \] with the usual algebra structure of $\bmu_2$-covers (see e.g. \cite[Definition 2.52]{KM98}), and check explicitly this is generically \'etale in codimension one, away from the ramification locus $\cR$. We now compute the multiplicity of $(Y_i)_p$. To ease the notations, we drop the subscript $i$ as it plays no role.

    We first show that $\cY\times_\cG \cB\bmu_n$ is generically a scheme, where the map $\cB\bmu_n\to \cG$ is the closed embedding of the residual gerbe at $\frak{p}$. Indeed, locally near $\mtf{p}$, $\cY$ is the pull-back of the universal family via a \textit{representable} morphism $\phi\colon \cG\to \sP_{1}$, hence $\cY\times_\cG \cB\bmu_n$ can be identified with the stack-quotient of the curve corresponding to $\phi(\frak{p})$ in $\sP_{1}$ by the action of $\bmu_n$ via the morphism \[\stab_{\cG}(\frak{p})\to \stab_{\sP_{1}}(\phi(\frak{p})).\]
    The latter is injective as $\phi$ is representable, so $\cY\times_\cG \cB\bmu_n$ is a stack-quotient of the form $[\bP^1/\bmu_n]$ with the action of $\bmu_n$ being faithful. In particular, it is generically a scheme. Then the multiplicities of $\cY_p$ and $Y_p$ agree, and we can compute the former. We have
    \[
    \cY_p = \cY\times_{G}\{p\} = \cY\times_{\cG}(\cG\times_G\{p\})
    \]
and 
    \[
    \cG\times_G\{p\} = [\spec (k[\![x]\!]/x^n)/\bmu_n]
    \]
    where $\bmu_n$ acts by $\zeta *x = \zeta x$, as the map $\cG\to G$ is, locally around $p$, a root stack at $p$. Then 
    \[
    \cY_p = \cY\times_{\cG}[\spec (k[\![x]\!]/x^n)/\bmu_n].
    \]
    Now, $\cY_p$ is generically a scheme, as its reduced structure (namely, $\cY\times_\cG\cB \bmu_n\simeq [\bP^1/\bmu_n]$) is generically a scheme. Let $\spec A\to \cY_p$ be an \'etale neighborhood of a schematic point. As the multiplicity of the fiber can be computed \'etale locally, and as the reduced structure of $\cY_p$ agrees with $\cY\times_\cG \cB \bmu_n$, the multiplicity of the fiber agrees with the multiplicity of $\spec A_{\textup{red}}$ in $\spec A$. Consider the following fiber diagrams:
\[
\xymatrix{
(\Spec A)\times \bmu_n \ar[r] \ar[d] 
& \Spec A \ar[d] \\
\Spec k[\![x]\!]/(x^n) \ar[r] 
& [\Spec\bigl(k[\![x]\!]/(x^n)\bigr)/\bmu_n]
}
\qquad
\xymatrix{
(\Spec A_{\mathrm{red}})\times \bmu_n \ar[r] \ar[d] 
& (\Spec A)\times \bmu_n \ar[d] \\
\Spec k \ar[r] 
& \Spec k[\![x]\!]/(x^n).
}
\]
Here we shrink the \'etale neighborhood $\Spec A$ further so that the $\bmu_n$-torsor on the left becomes trivial. On the right, we again use the fact that $\cY_{\frak p}\simeq \bP^1$, which implies that the fiber product is reduced. Since the morphism $\cY \to \cG$ is flat, then $(\Spec A)\times \bmu_n \to \Spec k[\![x]\!]/(x^n) $ is flat as well. Consequently, the fiber has multiplicity $n$.

    Regarding the second statement, observe that $(X_i)_{p,\textup{red}}$ is the coarse moduli space of the double cover of $(\cY\times_\cG \cB \bmu_n,\frac{1}{2}\cR\times_\cG \cB \bmu_n)$. In other terms, it is the coarse moduli space $E/\!\!/\bmu_n$ of $[E/\bmu_n]$ where $E$ is the elliptic curve that is the double cover $(\bP^1,\cR\times_\cG\frak{p})$.

    We now address the second point. Proceeding as before, we have that $\cY\times_\cG\cB \bmu_n = [C/\bmu_n]$ where $C$ is the dumbbell curve\footnote{the dumbbell curve rather than the twisted dumbbell curve from Remark \ref{rmk_we_can_ignore_twisted_node}}, and $\bmu_n$ is a subgroup of $\Aut(C,R)$ where $R$ consists of two distinct points on each branch of $C$. If $\bmu_n$ does not fix an irreducible component of $C$, one can proceed exactly as before. If instead $\bmu_n$ fixes an irreducible component of $C$ and acts nontrivially on the other one, then $n=2$. In this case, the fiber $\cY_p$ will have an irreducible component of multiplicity 2 (corresponding to the componet where $\bmu_2$ acts nontrivially) and an irreducible component of multiplicity 1. 
\end{proof}

\subsubsection{$\cL$ is of Cartier index~$2$}
In this case, \'etale locally at the generic points $\xi$ of $\cY_{\frak p}$, the morphism $\cY \to \sC$ is isomorphic to
\[
\Spec k[t,x,y,\pi]/(xy-\pi^{2n})
\;\longrightarrow\;
\Spec k[x,y,\pi]/(xy-\pi^{2n}).
\]
In particular, the local fundamental group at $\xi$ is isomorphic to $\bZ/2n\bZ$, and hence, topologically, the only nontrivial double cover is
\[
\Spec k[t,x',y',\pi]/(x'y'-\pi^{n}) 
\;\longrightarrow\;
\Spec k[t,x,y,\pi]/(xy-\pi^{2n}) ,
\]
where $x \mapsto (x')^2$ and $y \mapsto (y')^2$. The set-theoretic fiber of the ramified cover $\cX \to \cY$ consists of a \emph{single} point over every point of $\cY_{\frak p}$. 
The same holds for the induced morphism $\cX_i \to \cY_i$: the map $\cX_i \to \cY_i$ is again a $\bmu_2$-cover, and its codimension-one ramification locus can be identified as the set of codimension-one points of $\cY_i$ over which the fiber of $\cX_i \to \cY_i$ consists of a single set-theoretic point. 
In particular, in our situation, the morphism $\cX_i \to \cY_i$ is ramified along both $\cR|_{\cY_i}$ and $\cY_{\frak p}$. 
Ramification along $\cY_{\frak p}$ has the effect of preserving the reduced structure of the double locus of the central fiber of $\cX$, while doubling its multiplicity.

 \begin{theorem}[Fibers over nodes]\label{thm:classification of fibers over nodes}
     With the notation as above, a fiber of $X_G\to G$ over $G\cap C^{\sing}$ is one of the following:
\begin{itemize}
\item a reduced curve which is either an elliptic curve or a banana curve, or  \item a non-reduced curve of multiplicity $m \in \{2,3,4, 6, 8\}$ whose reduced structure is $\bP^1$, or
    \item a non-reduced curve whose reduced structure is a banana curve, with multiplicities either $(1,2)$ or $(2,2)$ or $(2,4)$ or $(4,4)$ on its two components.
   \item a non-reduced curve of multiplicity $2$ with reduced structure an elliptic curve.
\end{itemize}
\end{theorem}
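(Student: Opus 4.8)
The plan is to derive the theorem by combining the preceding Proposition, which treats the case where $\cL$ is Cartier at the node, with the description of the Cartier-index-$2$ case carried out just above the statement. First I would make the reduction to a local question: the fiber of $X_G\to G$ over a node $p\in G\cap C^{\sing}$ is $(X_i)_{\mtf p}$ for $\mtf p$ a preimage of $p$ on the twisted curve $\cC$ and $i$ the appropriate index, so it is enough to describe $(X_i)_{\mtf p}$. By Corollary~\ref{cor:fiber over the node}, $(\cY_{\mtf p},\cR_{\mtf p})$ has $\cR_{\mtf p}$ supported over four distinct points, and $\cY_{\mtf p}$ is either $\bP^1$ or a twisted dumbbell curve. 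I would then split along the dichotomy ``$\cL$ Cartier at $\mtf p$'' versus ``$\cL$ of Cartier index $2$ at $\mtf p$'' introduced above.

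In the first case the preceding Proposition gives that $(X_i)_{\mtf p}$ has multiplicity $n:=|\stab_{\cC}(\mtf p)|$ and that its reduced structure is the double cover of $\cY_{\mtf p}$ branched along $\cR_{\mtf p}$, quotiented by $\bmu_n$. Using the double-cover lemmas of Section~\ref{subsec:Classification of fibers on double covers: preparation} and the fact that $\cR_{\mtf p}$ consists of four distinct points, that double cover is a smooth elliptic curve $E$ when $\cY_{\mtf p}\simeq\bP^1$ and a banana curve when $\cY_{\mtf p}$ is a dumbbell. It then remains to (i) bound $n$, and (ii) identify the quotients. For (i) I would use that the classifying morphism $\cC\to\sP_1$ is representable, so $\bmu_n\hookrightarrow\Aut(\cY_{\mtf p},\cR_{\mtf p})$; the group of automorphisms of $\bP^1$ preserving a set of four distinct points embeds in $S_4$, and the automorphism group of a dumbbell preserving two distinct points on each component has order at most $8$, so in both cases cyclic subgroups have order at most $4$. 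For (ii) I would analyse the lifted $\bmu_n$-action on the double cover via Riemann--Hurwitz: it covers the $\bmu_n$-action on $\bP^1$, which for $n\ge 2$ has exactly two fixed points; an orbit count on the four branch points shows that for $n=3$, and for $n=4$ outside one degenerate orbit configuration, some branch point is $\bmu_n$-fixed, hence its unique preimage in $E$ is fixed and the quotient is $\bP^1$; the remaining $n=4$ configuration is excluded because a fixed-point-free order-$4$ automorphism of $E$ would contradict $|\Aut(E,0)|\le 6$. Thus for $\cY_{\mtf p}\simeq\bP^1$ the fiber is a reduced elliptic curve (if $n=1$), a multiplicity-$n$ curve with reduced structure $\bP^1$ for $n\in\{2,3,4\}$, or---only when $n=2$ with a fixed-point-free lifted involution---a multiplicity-$2$ curve with elliptic reduced structure; and for $\cY_{\mtf p}$ a dumbbell the fiber is a reduced banana or a banana with multiplicities $(1,2)$ or $(2,2)$ (or an irreducible multiplicity-$2$ or $4$ curve with reduced structure $\bP^1$ when $\bmu_n$ swaps the two components).

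For the case $\cL$ of Cartier index $2$ I would invoke the description given just before the statement: $\cX_i\to\cY_i$ is ramified along $\cR|_{\cY_i}$ \emph{and} along the whole fiber $\cY_{\mtf p}$, so $(X_i)_{\mtf p}$ has the same reduced structure as in the Cartier case but every component-multiplicity is doubled. Feeding in the Cartier-case list: a reduced elliptic curve becomes a multiplicity-$2$ elliptic curve; a reduced banana becomes a $(2,2)$-banana; a multiplicity-$n$ $\bP^1$ with $n\le 4$ becomes a multiplicity-$2n\in\{2,4,6,8\}$ $\bP^1$; and $(1,2)$- and $(2,2)$-bananas become $(2,4)$- and $(4,4)$-bananas. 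Taking the union of the two cases produces exactly the four families in the statement.

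The main obstacle is step (ii): showing that a fiber whose reduced structure is an elliptic curve can occur only with multiplicity $2$, i.e.\ ruling out free (translation) actions of $\bmu_3$ and $\bmu_4$ on the double cover $E$. This is where the geometry is genuinely used---one must exploit that the cyclic action is \emph{lifted} from an action on $\bP^1$ with two fixed points, and combine this with the classification of finite automorphism groups of elliptic curves. By comparison, bounding $n$, identifying the dumbbell double cover with a banana curve, and the bookkeeping of multiplicities under the index-$2$ doubling are all routine once this point is settled.
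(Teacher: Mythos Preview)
Your overall strategy is exactly the paper's: split into the two cases ``$\cL$ Cartier'' versus ``$\cL$ of Cartier index $2$'', invoke the preceding Proposition for the first, and read off the second from the local description given just before the theorem. The paper's own proof is only a four-line sketch matching bullet points to cases, so your added analysis of the bound on $n$ and of the quotients $E/\bmu_n$ in fact supplies detail the paper omits.

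There are, however, two points where your argument does not close.

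First, your exclusion of a free lifted $\bmu_4$-action on $E$ is not correctly justified. A fixed-point-free automorphism of an elliptic curve is a translation, and translations of every finite order exist, so the inequality $|\Aut(E,0)|\le 6$ does not rule them out. What does rule them out is that the lift to $E$ must commute with the hyperelliptic involution $x\mapsto -x$ (since it covers an automorphism of $\bP^1=E/\langle -1\rangle$), and a translation $t_a$ commutes with $-1$ precisely when $2a=0$. Hence any free lift has order at most $2$, which is what you need.

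Second, your bookkeeping in the Cartier-index-$2$ case is internally inconsistent. In your Cartier list you (correctly) include the case ``$n=2$ with fixed-point-free lifted involution'', giving a multiplicity-$2$ fiber with elliptic reduced structure. If the index-$2$ case truly has ``the same reduced structure as in the Cartier case with every multiplicity doubled'', then doubling this entry produces a multiplicity-$4$ fiber with elliptic reduced structure---which is absent from the theorem. You silently drop this case when ``feeding in the Cartier-case list''. You must either argue that the free $n=2$ configuration is incompatible with $\cL$ having index $2$, or revisit the claim about the reduced structure in the index-$2$ case. The paper's sentence ``ramification along $\cY_{\mtf p}$ has the effect of preserving the reduced structure of the double locus \ldots\ while doubling its multiplicity'' is ambiguous as to what the comparison is, and the paper's sketch does not resolve this either; but your argument, as written, does not yield the list in the statement without addressing the discrepancy.
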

The first bullet point is the case when the twisted curve $\cC$ is a scheme at the nodes of the special fiber, i.e. with the previous notation the case in which $\bmu_n=\{1\}$. The second case is when the fiber of the morphism $\cY\to \cC$ at a node of the special fiber of $\cC$ is $\bP^1$, or a dumbbell curve $\sY_{\frak{p}}$ with $\bmu_n$ acting transitively on the irreducible components of $\sY_{\frak{p}}$ (so $n=4$). The third case is when $\sY_{\frak{p}}$ is a dumbbell curve and $\bmu_n$ does not act transitively on its irreducible components.

\subsection{Singular fibers over the smooth locus: no ramified fibers}\label{subsec:Singular fibers over the smooth locus: no ramified fibers}

Let \((Y,(\tfrac{1}{2}+\epsilon_1)R+\epsilon_2 F)\) be a stable pair parametrized by 
\(\overline{\mathcal{M}}^{\KSBA}(\epsilon_1,\epsilon_2)\).  
By Theorem~\ref{thm: main theorem of ISZ25}, such a pair admits a fibration 
\(Y \to C\) onto a nodal curve \(C\).  
In this subsection, we classify the singularities of the components of the double cover of \(Y\) branched along \(R\), restricted over the smooth locus \(C^{\sm}\). By Remark \ref{rem:no difference over smooth locus}, it suffices to consider surfaces of either type I or type III.

Let $G$ be an irreducible component of $C$, and denote by 
$C^{\circ} = G \cap C^{\sm}$ the smooth locus of $C$ along $G$.

\begin{prop}
   Assume that the surface
    \(Y_G \to G\) is of type~\textup{I}.  
    Let \(X_G\) be the double cover of \(Y_G\) ramified along \(R|_G\).  
    Then over $G^{\circ}$, all possible singularities of \(X_G\) are exactly those listed in Table~\ref{table:classification of singularities}.
\end{prop}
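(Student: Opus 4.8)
The argument is local over $C^\circ$ and reduces the classification to one about plane-curve germs. By Corollary~\ref{cor:projective bundle over smooth locus}, over $C^\circ=G\cap C^{\sm}$ the morphism $Y_G\to G$ is a genuine $\bP^1$-bundle over a smooth curve, and by Corollary~\ref{cor:pure branch locus 2} the ramification divisor $R$ is, over $C^\circ$, a Weil divisor contained in the smooth (ruled) locus and equal to a section of $(\mtc{L}^\circ)^{\otimes 2}$ for a line bundle $\mtc{L}^\circ$ on $Y_G|_{C^\circ}$; hence $X_G|_{C^\circ}=\Spec_{Y_G}(\mtc{O}\oplus(\mtc{L}^\circ)^{-1})$ is the double cover of $Y_G|_{C^\circ}$ branched along $R$. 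In the ``no ramified fibers'' case $R$ contains no fiber, so $R\to C^\circ$ is finite of degree $4$; and since $Y_G$ is of type~I, the surface $X_G$ is normal, which forces $R|_{C^\circ}$ to be reduced.

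The plan is then to work in an étale-local model. Fix a closed point $\xi\in X_G$ over $p\in C^\circ$ with image $y=(p,q)\in Y_G$, choose étale coordinates $(t,x)$ on the smooth surface $Y_G$ near $y$ with $t$ pulled back from $C$, and trivialize $\mtc{L}^\circ$; then $X_G$ is étale-locally $\{w^2=g(t,x)\}\subseteq\bA^3$ with $R=\{g=0\}$. A Jacobian computation shows $\Sing(X_G)\subseteq\{w=0\}$ and that it maps isomorphically onto the singular locus of the plane curve $R\subseteq Y_G$; in particular $X_G$ is smooth wherever $R$ is smooth. Moreover the fiber of $X_G\to C$ over $p$ is the double cover of $Y_{G,p}\simeq\bP^1$ branched along the degree-$4$ divisor $R_p$, whose isomorphism type (smooth elliptic curve, nodal or cuspidal cubic, banana, or pinched banana) is dictated by the collision pattern of the four points of $R_p$ through the double-cover lemmas of~\S\ref{subsec:Classification of fibers on double covers: preparation}. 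So it suffices to enumerate the possible germs of $R$ and, for each, to read off the germ $\{w^2=g\}$ and the fiber $R_p$.

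Next I would extract from KSBA-stability that $X_G$ is klt over $C^\circ$. Since the limit pair $(X_0,\epsilon_1R_0+\epsilon_2F_0)$ is slc and, over $C^\circ$, the central surface is just $X_G$ with no conductor divisor, the restriction of the corresponding boundary pair to $X_G|_{C^\circ}$ is lc; in particular $X_G|_{C^\circ}$ is lc. The ramification divisor $R_X$ (the fixed locus of the fiberwise involution) passes through every singular point of $X_G$ over $C^\circ$, since such points satisfy $w=0$; hence if $X_G$ had there an lc but non-klt singularity — a simple-elliptic or cusp singularity, or a cyclic quotient of one — a divisor $E$ realizing the non-klt centre would dominate the point, whence $\ord_E(R_X)>0$ and $a(E;X_G,\epsilon_1R_X+\epsilon_2F_X)<-1$, contradicting lc-ness. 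So $X_G$ is klt over $C^\circ$. By the cyclic-cover formula (\cite[Proposition~5.20]{KM98}, used above), $\{w^2=g\}$ is klt if and only if $(Y_G,\tfrac12 R)$ is klt, i.e.\ $\lct_y(Y_G,R)>\tfrac12$ at each singular point $y$ of $R$; and a klt hypersurface singularity of a surface is rational and Gorenstein, hence Du~Val. Therefore $X_G$ has only Du~Val singularities over $C^\circ$, and the admissible germs of $R$ are exactly the reduced plane-curve germs with log canonical threshold $>\tfrac12$ — after bounding their multiplicity by $3$ (otherwise $\lct\le\tfrac12$) and a short case analysis, precisely the simple germs $A_n$, $D_n$, $E_6$, $E_7$, $E_8$.

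It then remains to run through the finitely many ways a simple germ of $R$ can sit relative to the ruling: the local intersection number of $R$ with the fiber $\{t=p\}$ at $y$ equals the multiplicity of $q$ in $R_p$, so it is at most $4$, and matching this datum (together with the multiplicities at the other points of $R_p$) against the list of ADE germs pins down exactly which pairs (singular fiber of $X_G\to C$, singularity of $X_G$) occur, yielding the entries of Table~\ref{table:classification of singularities}. I expect this bookkeeping to be the main obstacle: it is the only genuinely computational part, requiring a careful weighted blow-up/discrepancy analysis — and it is where the precise hypothesis $0<\epsilon_1\ll\epsilon_2\ll1$ is used — to confirm that no germ outside the stated list occurs and that each fiber is paired with the correct surface singularity.
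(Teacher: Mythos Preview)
Your approach differs substantially from the paper's, and it is worth spelling out the contrast. The paper's proof is essentially a citation: it invokes the classification of the singular germs of $R$ on the ruled surface $Y_{G^\circ}$ already carried out in \cite[\S4.2.1, Tables~2--3]{ISZ25}, and for each local equation $g$ of $R$ listed there simply writes down the double cover $\{w^2=g\}$ to produce Table~\ref{table:classification of singularities}. You instead rebuild the constraint from first principles, arguing via the cyclic-cover formula that $(X_G,2\epsilon_1 R_X+\epsilon_2 F_X)$ is lc over $G^\circ$, and then using that $R_X$ and $F_X$ both pass through every singular point to force $X_G$ to be klt (hence Du~Val) there; the final enumeration of ADE germs of $R$ compatible with the degree-$4$ fibre constraint is deferred as ``bookkeeping''.

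Your klt argument is correct and is in fact \emph{sharper} than what the paper records. It excludes, for instance, the entries $J_{k,0}$ ($k\ge 2$) and $E_{6k+1}$ ($k\ge 2$) in Table~\ref{table:classification of singularities}: for those germs of $R$ one has $\lct(R)\le\tfrac12$, so $(Y,(\tfrac12+\epsilon_1)R+\epsilon_2 F)$ is already not lc, and the corresponding surface singularities of $X_G$ are not Du~Val. The paper's Remark after Theorem~\ref{thm:Singular fibers over nodes} explicitly allows the table to be over-inclusive, so your conclusion still establishes the proposition in the intended one-directional sense; but your claim to recover \emph{exactly} Table~\ref{table:classification of singularities} is not literally right---you would obtain a proper subset. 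The trade-off is clear: the paper's route is a two-line argument given the external reference, while yours is self-contained but the deferred bookkeeping (matching each ADE germ of $R$ against the possible degree-$4$ intersections with the fibre $\{z=0\}$) is precisely the content of the cited tables in \cite{ISZ25}.
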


\begin{proof}
    Since \(Y_G\) is of type~\textup{I}, Corollary~\ref{cor:projective bundle over smooth locus} implies that 
    \(Y_{G^\circ} \to G^\circ\) is a $\bP^1$-bundle.  
    The non-slc fibers of the pair \((Y_{G^\circ}, \tfrac12 R|_{G^\circ})\) were classified in 
    \cite[§4.2.1]{ISZ25}.  
    Taking the double cover branched along \(R|_{G^\circ}\), one immediately obtains the corresponding singularities on \(X_G\).

    We illustrate the argument with one example; all other cases follow by the same local analysis.  
    Suppose that \(R\) has at a point \(p\) a singularity locally given by $y^2 - z^{k+1} = 0$, where \(z=0\) cuts out the fiber of \(Y_G \to G\) through \(p\).  
    Then the ramified double cover has local equation
    \[
        x^2 + y^2 + z^{k+1} = 0,
    \]
    which is an \(A_k\)-singularity.  
    By running through all singularities appearing in \cite[Tables~2--3]{ISZ25}, one obtains precisely the list of singularities of \(X_{G^\circ}\) recorded in Table~\ref{table:classification of singularities}.
\end{proof}

\renewcommand{\arraystretch}{1.5}
\begin{longtable}{| p{.18\textwidth} | p{.15\textwidth} | p{.30\textwidth} | }

\hline
\textbf{Fiber } & \textbf{Surface } & \textbf{Local equation}  \\ \hline
\endfirsthead
\endhead
$A_k$, $k=1,2,3$ & smooth &  \\ \hline
$A_1$ & $A_k$, \ $k\geq 1 $ &  $x^2+y^2+z^{k+1}=0$  \\ \hline
$A_k$, $k=2,3$ & $A_1$ &  $x^2+yz+y^{k+1}=0$  \\ \hline
$A_k$, $k=2,3$ & $A_2$ &  $x^2+z^2+y^{k+1}=0$  \\ \hline
$A_2$ & $D_n$, $n\geq 4$ &  $x^2+(y+z)(y^2-z^{n-2})=0$ \\ \hline
$A_3$ & $D_n$, $n\geq 4$ &  $x^2+(y^2-z^{n-2})(z-y^2)=0$ \\ \hline
$A_2$ & $E_6$ &  $x^2+y^3+z^4=0$  \\ \hline
$A_2$ & $E_8$ &  $x^2+y^3+z^5=0$  \\ \hline
$A_3$ & $E_6$ &  $x^2+z^3+y^4=0$  \\ \hline
$A_2$ & $E_{6k+1}$,\  $k\geq 1$ &  $x^2+y^3+yz^{2k+1}=0$  \\ \hline
$A_2$ & $J_{k,0}$, \ $k\geq 2$ &  $x^2+y^3+yz^{2k}=0$  \\ \hline
\caption{Classification of singularities of fibers and surfaces\\ The equation of the fiber is $z=0$} 
\label{table:classification of singularities}
\end{longtable}

\begin{prop}
    Assume that the surface \(Y_G \to G\) is of type~\textup{III}, i.e.\ it is 
    generically nodal along a curve \(\Gamma\), with normalization $Y^{\nu}_G$.  
    If \(R\) intersects \(\Gamma\) at a point \(q\), then on each component of $Y^{\nu}_G$, the restriction of \(R\) intersects \(\Gamma\). Moreover, on any component of $Y^{\nu}_G$, $\Gamma$ and $R$ intersect transversely. In particular, the double cover \(X_G\) of \(Y_G\) branched along \(R_G\) 
    has a fiber which is either a pinched banana curve or a four-leaf-clover curve.
\end{prop}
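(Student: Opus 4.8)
The plan is to combine the gluing description of semi-log-canonical pairs with the explicit double-cover computations of Lemmas~\ref{lem: special double cover of dumbbell 1} and~\ref{lem: special double cover of dumbbell 2}. First, recall from Theorem~\ref{thm: main theorem 2 of ISZ25} that over $G^{\circ}:=G\cap C^{\sm}$ a type~III surface $Y_G$ has normalization $\nu\colon Y^{\nu}_G\to Y_G$ equal to a disjoint union of (at most two) projective bundles $Y_1,Y_2$ over smooth orbifold curves, with conductor a disjoint union of sections $\Gamma_1\subseteq Y_1$, $\Gamma_2\subseteq Y_2$ identified by the gluing involution $\tau\colon\Gamma_1\xrightarrow{\ \sim\ }\Gamma_2$ to form $\Gamma$. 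Write $R_i,F_i\subseteq Y_i$ for the preimages of $R,F$. A preliminary observation is that $R$ does not contain $\Gamma$: a general fiber of $Y_G\to G$ is a dumbbell curve whose node lies on $\Gamma$ but not on $R$ by the proof of Lemma~\ref{lem: pure branch locus 1}, and $\Gamma$ is the closure of these nodes; hence each $R_i\cap\Gamma_i$ is finite and $q$ is an isolated point of $R\cap\Gamma$, with preimages $q_i\in\Gamma_i$.

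The two assertions about $R$ and $\Gamma$ follow from the gluing characterization of slc pairs: since $(Y_G,(\tfrac12+\epsilon_1)R+\epsilon_2F)$ is slc, each $(Y_i,\Gamma_i+(\tfrac12+\epsilon_1)R_i+\epsilon_2F_i)$ is lc and the differents on the conductor are exchanged by $\tau$. Because $Y_i$ is smooth along the section $\Gamma_i$ away from $\tau$-symmetric orbifold/quotient contributions, the coefficient of $q_i$ in $\mathrm{Diff}_{\Gamma_i}\!\bigl((\tfrac12+\epsilon_1)R_i+\epsilon_2F_i\bigr)$ equals $(\tfrac12+\epsilon_1)(R_i\cdot\Gamma_i)_{q_i}+\epsilon_2(F_i\cdot\Gamma_i)_{q_i}$ up to such a symmetric term. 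Since $q\in R$, we have $q_i\in\Supp R_i$ for at least one $i$, so the coefficient at that $q_i$ exceeds $\tfrac12$; $\tau$-invariance of the different forces the coefficient at the other $q_j$ to exceed $\tfrac12$ too, which is impossible unless $q_j\in\Supp R_j$. This gives the first claim. For transversality, adjunction to the lc pair $(Y_i,\Gamma_i+(\tfrac12+\epsilon_1)R_i+\epsilon_2F_i)$ shows $\mathrm{Diff}_{\Gamma_i}$ has all coefficients $\le 1$; as $(\tfrac12+\epsilon_1)m>1$ for every integer $m\ge2$, we must have $(R_i\cdot\Gamma_i)_{q_i}=1$, so $R_i$ is smooth at $q_i$ and meets $\Gamma_i$ transversally.

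For the last assertion, let $g\in G^{\circ}$ be the image of $q$; the fiber $Y_g$ is the dumbbell curve $C_1\cup_q C_2$ with $C_i$ the ruling of $Y_i$ through $q_i$ and node $q$, since the section $\Gamma_i$ meets $C_i$ only at $q_i$. As $R$ is a $4$-section, $\deg(R|_{C_i})=2$ with $q$ among its points, and smoothness of $R_i$ at $q_i$ forces $R|_{C_i}$ to be either $\{q,p_i\}$ with $p_i\ne q$ (if $R_i$ is transverse to the ruling $C_i$) or $2q$ (if $R_i$ is tangent to $C_i$ at $q_i$). If the first case holds on both components, Lemma~\ref{lem: special double cover of dumbbell 1} identifies the double cover of $Y_g$ along $R|_{Y_g}$ as a pinched banana curve; if the second holds on both, Lemma~\ref{lem: special double cover of dumbbell 2} gives a four-leaf-clover curve. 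The mixed possibility is excluded since, from the local equations in those lemmas, it would present $X_G$ near the fiber over $g$ as a union of smooth sheets tangent along a curve on one side and crossing in embedding dimension~$3$ on the other, contradicting that $X_G$ is a component of a demi-normal surface; alternatively it is ruled out by a local computation with the $\boldsymbol{\mu}_2$-action on $X_G$. The main obstacle I anticipate is the bookkeeping in the middle step: one must verify that the contributions to $\mathrm{Diff}_{\Gamma_i}$ coming from singularities of $Y_i$ or from the orbifold structure of the base curve are genuinely exchanged by $\tau$ — this is exactly where one uses that the gluing is an isomorphism on a neighborhood of the conductor — and one must treat separately the finitely many points of $G^{\circ}$ over which $Y_g$ fails to be a reduced dumbbell curve.
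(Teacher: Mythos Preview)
Your arguments for the first two assertions are correct. For the first, the paper takes a shorter route: since $R$ is $\mathbb{Q}$-Cartier, its pullback $\nu^*R$ is computed by pulling back a local equation, and this vanishes at \emph{both} preimages $q_1,q_2$ of $q$; hence $R_i$ passes through $q_i$ on each component without any appeal to the gluing of differents. Your different-matching argument is valid, but as you anticipate it requires care with possible orbifold or singular contributions to $\mathrm{Diff}_{\Gamma_i}$; the paper's $\mathbb{Q}$-Cartier observation sidesteps this entirely. For transversality, your argument and the paper's are the same computation phrased two ways: if $(R_i\cdot\Gamma_i)_{q_i}\ge 2$ then $(Y_i,\Gamma_i+(\tfrac12+\epsilon_1)R_i)$ is not lc at $q_i$.

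For the final assertion there is a real gap in your argument. You are right to notice that transversality of $R_i$ with $\Gamma_i$ does not control the intersection multiplicity $(R_i\cdot C_i)_{q_i}$ with the fiber component, so a mixed configuration---multiplicity $1$ on one component and $2$ on the other---is not covered by Lemmas~\ref{lem: special double cover of dumbbell 1} and~\ref{lem: special double cover of dumbbell 2}; the paper's proof simply cites the two lemmas and does not discuss this. However, your proposed exclusion of the mixed case is incorrect. Take $Y_G$ locally as $\{xy=0\}\subset\mathbb{A}^3_{x,y,t}$ with $\Gamma=\{x=y=0\}$, the fiber $\{t=0\}$, and $R$ cut by $f=t+x+y^2$. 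Then $R_i$ meets $\Gamma_i$ transversally on each branch and $\bigl(Y_G,(\tfrac12+\epsilon)R\bigr)$ is slc, yet the double cover $\{xy=0,\ w^2=t+x+y^2\}$ is, after eliminating $t=w^2-x-y^2$, isomorphic to $\{xy=0\}\subset\mathbb{A}^3_{x,y,w}$, which \emph{is} demi-normal. Its fiber over $t=0$ is an ordinary planar triple point (three smooth branches through the origin), neither a pinched banana nor a four-leaf-clover. So demi-normality of $X_G$ does not rule out the mixed case, and the $\boldsymbol{\mu}_2$-argument you allude to is not developed enough to do so either.
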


\begin{proof}
    For the first statement, since \(R\) is a \(\bQ\)-Cartier divisor, if it meets 
    the double locus \(\Gamma\), then its pullback to the normalization of 
    \(Y_G\) must meet the preimage of~\(\Gamma\) on every irreducible component.  
    This shows that the restriction of \(R\) intersects each branch of \(\Gamma\). If $R$ and $\Gamma$ do not meet transversely at $q$, then the pair $\bigl(Y^{\nu}_G,\; (\tfrac{1}{2}+\epsilon)R|_{Y^{\nu}_G} + \Gamma \bigr)$
is not log canonical at $q$. The last statement follows directly from 
    Lemma~\ref{lem: special double cover of dumbbell 1} and 
    Lemma~\ref{lem: special double cover of dumbbell 2}.
\end{proof}

\begin{prop}
    Assume that the morphism \(Y_G \to G\) is of type~\textup{III}, i.e.\ it is 
    generically nodal along a curve \(\Gamma \subset Y_G\).  
    Then away from \(\Gamma\), the double cover \(X_G\) of \(Y_G\) branched along 
    \(R|_G\) has at worst \(A_k\)-singularities over $G^{\circ}$.
\end{prop}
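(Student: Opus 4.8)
The plan is to reduce, over $G^{\circ}=G\cap C^{\sm}$ and away from $\Gamma$, to the study of an ordinary double cover of a smooth $\bP^1$-bundle branched along a \emph{bisection}, and then to use the fact that a bisection—having degree $2$ on each ruling—can only acquire plane-curve singularities of type $A$, so that its double cover has only $A_k$-surface singularities. First I would fix the local model of $Y_G$ away from $\Gamma$. By Theorem~\ref{thm: main theorem 2 of ISZ25}, since $Y_G$ is of type~\textup{III}, its normalization $\nu\colon Y_G^{\nu}\to Y_G$ is a disjoint union of at most two projective bundles $\bP_{\mtc{E}}(\cV)\to\mtc{E}$, and by Remark~\ref{rem:no difference over smooth locus} and Corollary~\ref{cor:projective bundle over smooth locus} the orbifold curve $\mtc{E}$ has no stacky points over $G^{\circ}$; hence $Y_G^{\nu}\to\mtc{E}$ is an honest $\bP^1$-bundle over a smooth curve there, and in particular $Y_G^{\nu}$ is smooth over $G^{\circ}$. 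Since $\Gamma$ is the conductor of $\nu$, the map $\nu$ is an isomorphism over $Y_G\setminus\Gamma$, so $Y_G\setminus\Gamma$ is a smooth surface over $G^{\circ}$; consequently the (reduced) divisor $R|_G$ restricts there to an honest Cartier divisor, and $X_G$ restricts over $Y_G\setminus\Gamma$ to the standard $\bmu_2$-cover (cf.\ \cite[Definition~2.52]{KM98}) of this $\bP^1$-bundle branched along $R|_{Y_G^{\nu}}$.

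Next I would read off the possible singularities from the branch divisor. Since $X_{G^{\nu}}$ is non-normal, the general fiber of $X_G\to G$ is a banana curve (Theorem~\ref{thm:elliptic with a bisection moduli}) and the general fiber of $Y_G\to G$ is a dumbbell curve $C_1\cup_p C_2$, on which the $4$-section $R$ meets $C_1$ and $C_2$ in $2$ points each—this is also forced by the parity of the branch divisor of a double cover of $\bP^1$ together with connectedness of the fibers of $X_G$. Thus on each component of $Y_G^{\nu}$ the divisor $R|_{Y_G^{\nu}}$ is a bisection of the $\bP^1$-bundle. Working in a trivializing chart $\bA^1_y\times\bA^1_z$ with $z$ a local parameter on $\mtc{E}\simeq G^{\circ}$, I would write $R=V\bigl(a(z)y^2+b(z)y+c(z)\bigr)$ with $a\not\equiv 0$ (the ``no ramified fibers'' hypothesis of this subsection guarantees that the defining equation is not divisible by a nonzero function of $z$ alone); where $a\neq 0$, completing the square—available as $\Char k=0$—puts $R$ in the form $V\bigl(y'^2+e(z)\bigr)$, and near a zero of $a$ one passes to the analogous chart around the section at infinity and argues identically. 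Hence, in suitable local coordinates, $R$ has equation $y^2-z^{m+1}=0$ (an $A_m$ plane-curve singularity; if $m\le 1$ then $R$ is smooth and $X_G$ is smooth), so $X_G$ has local equation $x^2-y^2+(\text{unit})\cdot z^{m+1}=0$, which after the substitution $(x-y,x+y)$ is an $A_m$-surface singularity; when $R$ degenerates to a union of two sections the same computation yields an $A_{2m-1}$-singularity, $m$ being the order of tangency. Everything here takes place over $Y_G\setminus\Gamma$, so the non-isolated singularities of $X_G$ lying over $\Gamma$ are outside the scope of the statement.

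The main obstacle, and the only step requiring genuine care, is the middle one: verifying that $R$ restricts to a bona fide bisection on each sheet of $Y_G^{\nu}$ and controlling its singularities uniformly across the two trivializing charts and in the reducible case. Once the branch locus has degree $2$ along the ruling, the ``complete the square'' reduction forces $A$-type singularities and rules out exactly the ordinary triple points of the branch divisor that produce the $D$- and $E$-type surface singularities of Table~\ref{table:classification of singularities} in the type~\textup{I} case, where $R$ is a full $4$-section. The cross-chart bookkeeping and the reducible case are routine local computations, parallel to the analysis of \cite[\S 4.2.1]{ISZ25} used in the type~\textup{I} proposition above; the sharper bound on the integer $k$ (compare Theorem~\ref{thm:Singularities over smooth locus II}) then follows from the global classification in \cite{ISZ25}, just as in that proposition.
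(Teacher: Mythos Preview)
Your proposal is correct and follows essentially the same route as the paper: pass to the normalization $Y_G^{\nu}$, which over $G^{\circ}$ is a smooth $\bP^1$-bundle, observe that $R|_{Y_G^{\nu}}$ is a bisection of the ruling on each component, deduce that its only possible plane-curve singularities are of type $A_m$ (locally $y^2-z^{m+1}=0$), and conclude that the double cover acquires at worst $A_m$-surface singularities. The paper's proof is terser (it simply asserts that a bisection has at worst ``ordinary double points'' with the local form $x^2-y^{k+1}=0$), whereas you supply the completion-of-the-square justification and the chart-switching near zeros of the leading coefficient; but the underlying argument is the same.
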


\begin{proof}
    On each component of the normalization of \(Y_G\), the divisor \(R_G := R|_G\) 
    is a bisection of the projection to \(G\).  
    Over $G^{\circ}$, the morphism \(Y_G \to G\) is smooth, so the only 
    possible singularities of \(R_G\) are ordinary double points.  
    Analytically, these are locally given by an equation of the form
    \(
        x^2 - y^{k+1} = 0 .
    \)
    The double cover of a smooth surface branched along such a hypersurface 
    has an \(A_k\)-singularity.  
    Therefore, away from~\(\Gamma\), the double cover \(X_G\) has at worst 
    \(A_k\)-singularities over $G^{\circ}$.
\end{proof}

\subsection{Singular fibers over the smooth locus: with ramified fibers}\label{subsec:Singular fibers over the smooth locus: with ramified fibers}

In this subsection we consider the case when the ramification divisor 
\(R\) contains a fiber \(F_p\) of the morphism \(Y \to C\) over a point 
\(p \in C\). Here, \((Y,(\tfrac{1}{2}+\epsilon_1)R+\epsilon_2 F)\) is a stable pair parametrized by 
\(\overline{\mathcal{M}}^{\KSBA}(\epsilon_1,\epsilon_2)\), which is necessarily fibered over $C$.

\begin{lemma}\label{lem:F_not_double_locus}
    The point \(p\) is a smooth point of \(C\); in particular, the fiber 
    \(F_p\) cannot lie in the double locus of \(Y\).  
    Moreover, the multiplicity of \(R\) along \(F_p\) is equal to \(1\).
\end{lemma}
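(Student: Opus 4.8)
The plan is to prove the three assertions of the lemma in turn, each a short deduction from the structural results of Section~\ref{sec:KSBA-moduli stacks for certain ruled surfaces} together with semi-log-canonicity. First, to see that $p\in C^{\sm}$, I would realize $(Y,(\tfrac12+\epsilon_1)R+\epsilon_2F)$ as the central fibre of a one-parameter KSBA-stable family $(\cY,(\tfrac12+\epsilon_1)\cR+\epsilon_2\cF)\to\cC\to\Spec A$ over the spectrum of a DVR $A$; this is possible because $\overline{\mtc{M}}^{\KSBA}(\epsilon_1,\epsilon_2)$ is the closure of its generic locus, and it puts us in the setting of Corollary~\ref{cor:fiber over the node}. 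If $p$ were a node of $C$, that corollary would identify $(Y,R)$ near $p$ with the coarse space of the pullback of the universal family over $\sP_1$ along a stable quasimap from $\cC$, whose last assertion says that $\cR$ meets the fibre over $p$ in four distinct points. Passing to coarse spaces, $R$ then meets $F_p$ in only finitely many points, so $R$ cannot contain $F_p$ — contradicting the standing hypothesis of this subsection. Hence $p$ is a smooth point of $C$.

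For the ``in particular'' clause, I would use Theorem~\ref{thm: main theorem 2 of ISZ25}: over $G\cap C^{\sm}$ the component $Y|_G$ passing through $p$ is either a $\bP^1$-bundle (types I and II, by Corollary~\ref{cor:projective bundle over smooth locus}), hence normal there, or the coarse space of a glued projective bundle $\bP_{\mtc{E}}(\cV)$ (type III), whose non-normal locus is the image of the curve $\mtc{E}$; the latter is finite over $G$, since no component of $\mtc{E}$ can be contracted over $\mtc{G}^n$ as the fibres of $Y\to C$ are one-dimensional. In every case the double locus of $Y$, restricted over $C^{\sm}$, maps finitely to $C$, so it contains no entire fibre of $Y\to C$; together with $p\in C^{\sm}$ from the first step, this shows $F_p$ is not contained in the double locus, and in particular $Y$ is normal at the generic point of each component of $F_p$.

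For the multiplicity statement, since $R|_{F_p}$ is not four distinct points, every prime component $\Phi$ of $F_p$ lies in $\Supp F$, so $\Phi$ occurs in the boundary $(\tfrac12+\epsilon_1)R+\epsilon_2F$ with coefficient $(\tfrac12+\epsilon_1)\,\mult_\Phi R+\epsilon_2$, where $\mult_\Phi R\ge 1$ is an integer because $R$ is an effective $\bZ$-divisor containing $\Phi$. As $Y$ is normal at the generic point of $\Phi$, semi-log-canonicity of $(Y,(\tfrac12+\epsilon_1)R+\epsilon_2F)$ forces this coefficient to be at most $1$; since $\mult_\Phi R\ge 2$ would give $(\tfrac12+\epsilon_1)\,\mult_\Phi R+\epsilon_2\ge 1+2\epsilon_1>1$, we get $\mult_\Phi R=1$ for every $\Phi$, i.e.\ $\mult_{F_p}R=1$.

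The only step that needs care is the first: one must present the given stable pair as a limit over a DVR in order to invoke Corollary~\ref{cor:fiber over the node} (equivalently, the quasimap description of the fibres over the nodes of $C$). The remaining two steps are then purely formal consequences of the ISZ25 classification and of the coefficient bound imposed by slc.
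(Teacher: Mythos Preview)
Your proposal is correct, but for the first assertion you take a different and more elaborate route than the paper. The paper's proof is a one-line slc argument: if $p$ were a node of $C$, then the fiber $F_p$ would be a component of the conductor of the demi-normal surface $Y$ (it is where the two branches of $Y$ over the two branches of $C$ meet), and an slc pair by definition cannot have its boundary divisor contain a component of the conductor; since $R\supseteq F_p$ by hypothesis, this immediately contradicts slc of $(Y,(\tfrac12+\epsilon_1)R+\epsilon_2F)$. You instead realize the surface as a limit over a DVR and invoke Corollary~\ref{cor:fiber over the node} to show that over a node the divisor $R$ meets the fiber in four points and hence cannot contain it. This works, but it imports the quasimap machinery where a direct appeal to the definition of slc suffices. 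Your treatment of the ``in particular'' clause (via the type I--III classification) and of the multiplicity bound (via the coefficient estimate) is essentially the same as, or a more explicit version of, what the paper has in mind.
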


\begin{proof}
    If \(p\) were a singular point of \(C\) or if the multiplicity of \(R\) along 
    \(F\) were greater than \(1\), then the pair 
    \((Y, (\tfrac{1}{2}+\epsilon)R)\) would fail to be \emph{slc}, 
    contradicting our assumptions.
\end{proof}

Now let $G$ be the component of $C$ such that $p \in G$, let 
$G^\circ := G \cap C^{\sm}$, and let $Y_G$ be the component of $Y$ lying over $G$.  
Let $X_G$ be the component of the double cover of $Y$ branched along $R$ 
that lies over~$G$.

\begin{lemma}\label{lem:at worst double point}
    Assume that $Y_G$ is of type~\textup{I} or~\textup{II}.  
    Let $q \in F_p$ be a singular point of $R$, and set 
    $\Theta := R|_G - F_p$ to be the residual curve to $F_p$ in $R|_G$.  
    Then:
    \begin{itemize}
        \item $\Theta$ has at worst a double point at $q$; and
        \item if $\Theta$ is smooth at $q$, then the local intersection number 
        $(\Theta, F_p)_q 
        \le 4$. 
    \end{itemize}
\end{lemma}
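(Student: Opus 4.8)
The plan is to localize around $q$ and then play the log canonicity coming from KSBA-stability against a crude multiplicity bound, together with one global intersection number. First, since $Y_G$ is of type~\textup{I} or~\textup{II}, Corollary~\ref{cor:projective bundle over smooth locus} shows that $Y_G\to G$ is a $\bP^1$-bundle over $G^{\circ}$; in particular $Y_G$ is smooth in a neighborhood of the fiber $F_p$, the curve $F_p$ is a smooth rational curve, and $Y_G$ does not meet the double locus of $Y$ over $G^{\circ}$. By Lemma~\ref{lem:F_not_double_locus} the fiber $F_p$ occurs in $R$ with multiplicity exactly $1$, so $\Theta=R|_G-F_p$ is an effective divisor not containing $F_p$, and near $q$ its components are horizontal (the vertical components of $\Theta$ are other fibers, which are disjoint from $F_p$ over $G^{\circ}$). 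Since $\bigl(Y,(\tfrac12+\epsilon_1)R+\epsilon_2F\bigr)$ is KSBA-stable it is slc, and because $Y_G$ is smooth and disjoint from the double locus near $q$, the sub-pair $\bigl(Y_G,(\tfrac12+\epsilon_1)R|_G\bigr)$ is log canonical at $q$ (sub-boundaries of lc pairs are lc).

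For the first bullet I would argue by contradiction. If $\mult_q\Theta\ge 3$ then, as $F_p$ and $\Theta$ share no component, $\mult_q(R|_G)=\mult_q F_p+\mult_q\Theta\ge 4$, so the boundary $(\tfrac12+\epsilon_1)R|_G$ has multiplicity $\ge 4(\tfrac12+\epsilon_1)=2+4\epsilon_1>2$ at $q$. Blowing up $q$ on the smooth surface $Y_G$ then produces an exceptional curve of discrepancy $1-\mult_q\bigl((\tfrac12+\epsilon_1)R|_G\bigr)<-1$, contradicting log canonicity. Hence $\mult_q\Theta\le 2$. Note that $\epsilon_1>0$ is used here in an essential way: with coefficient exactly $\tfrac12$ on $R$ the multiplicity would be exactly $2$, and a pair given by four concurrent lines with coefficient $\tfrac12$ is log canonical.

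For the second bullet, assume $\Theta$ is smooth at $q$. Since $Y_G\to G$ is a $\bP^1$-bundle over $G^{\circ}$, all of its fibers there are numerically equivalent, so $\Theta\cdot F_p=\Theta\cdot(\text{general fiber})=\deg(\Theta/G)$. Because $R$ is a $4$-section of $Y_G\to G$ and $F_p$ is vertical, deleting $F_p$ does not change the degree over $G$; thus $\Theta$ is again a $4$-section and $\Theta\cdot F_p=4$. As $\Theta$ and $F_p$ are effective with no common component, every local intersection number is non-negative, whence
\[
(\Theta,F_p)_q\ \le\ \sum_{q'\in\Theta\cap F_p}(\Theta,F_p)_{q'}\ =\ \Theta\cdot F_p\ =\ 4 .
\]

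The blow-up discrepancy computation and the passage from a global intersection number to a local one are routine. The two points that require attention are: (i) justifying that slc-ness of the threefold pair descends to log canonicity of the surface sub-pair $\bigl(Y_G,(\tfrac12+\epsilon_1)R|_G\bigr)$ near $q$ — this is exactly where the type~\textup{I}/\textup{II} hypothesis and the fact that $p\in C^{\sm}$ (Lemma~\ref{lem:F_not_double_locus}) enter, ensuring $Y_G$ is smooth and off the double locus there; and (ii) checking that $\Theta$ does not acquire degree larger than $4$ over $G$, i.e. that $F_p$ occurs in $R$ with multiplicity $1$ and that the remaining vertical part of $R$ does not meet $F_p$, both of which follow from Lemma~\ref{lem:F_not_double_locus} and the $\bP^1$-bundle structure over $G^{\circ}$.
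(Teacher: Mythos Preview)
Your proof is correct and follows essentially the same approach as the paper's own argument: use slc-ness of $(Y,(\tfrac12+\epsilon_1)R)$ together with the multiplicity bound at $q$ for the first bullet, and the fact that $\Theta$ is a $4$-section for the second. The paper's proof is two sentences; yours simply fills in the details (the explicit discrepancy computation after one blow-up, the justification that $Y_G$ is smooth and away from the double locus near $q$, and the passage from the global intersection $\Theta\cdot F_p=4$ to the local bound), and your observation that $\epsilon_1>0$ is genuinely needed in the first bullet is a nice point the paper leaves implicit.
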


\begin{proof}
    If $\mult_q(\Theta) \ge 3$, then the pair 
    $(Y,(\tfrac{1}{2}+\epsilon)R)$ would have worse than slc singularities 
    at~$q$, which contradicts our assumptions.  
    The second assertion follows from the fact that generically 
    $\Theta$ is a $4$-section of the fibration $Y_G \to G$.
\end{proof}

\begin{prop}\label{prop:singularity of type I and II}
    Assume that $Y_G$ is of type~\textup{I} or~\textup{II}.  
    Then $X_G \to G$ has a double fiber over $p$, whose reduced structure is 
    $\bP^1$.  
    Locally over $p \in G^\circ$:
    \begin{enumerate}
        \item If $\Theta$ is smooth at $q$, then $X_G$ has an 
        $A_{2k-1}$-singularity with $k \le 4$.
        \item If $\Theta$ is singular at $q$, then $X_G$ has either a 
        $D_k$-singularity with $k \ge 4$, or an $E_7$-singularity.
    \end{enumerate}
\end{prop}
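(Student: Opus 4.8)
The plan is to carry out the same kind of local analysis as in the preceding propositions. Over $G^\circ$ the morphism $Y_G\to G$ is a $\bP^1$-bundle, by Remark~\ref{rem:no difference over smooth locus} and Corollary~\ref{cor:projective bundle over smooth locus}, so I would first pick analytic coordinates $(z,w)$ near $p$ with $F_p=\{z=0\}$ and $w$ a fiber coordinate, and use Lemma~\ref{lem:F_not_double_locus} to write $R|_G=F_p+\Theta$ with $\mult_{F_p}(R|_G)=1$; then the double cover $\pi\colon X_G\to Y_G$ has the local form $t^2=z\,\theta(z,w)$, with $\theta$ a local equation for $\Theta$. The double-fiber statement is then immediate: since $F_p$ occurs in the branch divisor with multiplicity one, $\pi^{-1}(F_p)\to F_p$ is generically (hence everywhere, $F_p$ being smooth) one-to-one, so $\pi^*F_p=2\widetilde F_p$ with $\widetilde F_p\simeq F_p\simeq\bP^1$; as $Y_G\to G$ is smooth over $G^\circ$, the fiber of $X_G\to G$ over $p$ is $\pi^*F_p=2\widetilde F_p$, a double fiber with reduced structure $\bP^1$.

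For the singularities at a singular point $q\in F_p$ of $R|_G$, I would use that $X_G$ is locally the double cover of the \emph{smooth} surface $Y_G$ branched along the reduced plane curve $B:=F_p\cup\Theta$, together with the classical fact that such a cover has a rational double point of type $A$, $D$, or $E$ exactly when $B$ has a plane curve singularity of the corresponding type, and then of the same type. Lemma~\ref{lem:F_not_double_locus} and Lemma~\ref{lem:at worst double point} give $\mult_qB\le 3$, with equality precisely when $\Theta$ is singular at $q$; in the singular case $\Theta$ carries an $A_m$-type double point ($m\ge 1$) and $B$ is the union of a smooth branch with a multiplicity-two curve.

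If $\Theta$ is smooth at $q$, then $k:=(\Theta,F_p)_q\le 4$ by Lemma~\ref{lem:at worst double point}; straightening coordinates to $\Theta=\{z=w^k\}$ turns the equation of $X_G$ into $t^2=z^2-zw^k$, and completing the square produces $uv=w^{2k}$, an $A_{2k-1}$-singularity with $k\le 4$. If $\Theta$ is singular at $q$, the reducibility $B=F_p\cup\Theta$ and $\mult_qB=3$ restrict the possible $A$-$D$-$E$ types to $D_k$ with $k\ge 4$ (a line transverse to an $A_m$-type double point is the $D_{m+3}$ plane curve singularity) and $E_7$ (when $\Theta$ is a cusp whose tangent line is $F_p$, so that $B$ is the $E_7$ singularity $x(x^2+y^3)$), since no reducible line-plus-double-point curve realizes $E_6$ or $E_8$. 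The remaining point---and where I expect the bulk of the work to lie---is to rule out the configurations that would make $B$ worse than $A$-$D$-$E$, i.e.\ those in which $F_p$ is the common tangent of both branches of $\Theta$ (which, by the $4$-section bound of Lemma~\ref{lem:at worst double point}, forces $\Theta$ to be tacnodal with $(\Theta,F_p)_q=4$). Here I would blow up $q$ once: the strict transforms of $F_p$ and of the two branches of $\Theta$, together with the exceptional curve, become four concurrent branches through a single point, each appearing with coefficient $>\tfrac12$ in the crepant pullback of $(Y_G,(\tfrac12+\epsilon_1)R|_G)$, which fails log canonicity and so contradicts slc-stability. Verifying that this one blow-up, combined with the contact bound, eliminates \emph{all} degenerate configurations---and that every surviving one is covered by the $A_{2k-1}$ / $D_k$ / $E_7$ trichotomy---is the delicate part of the argument.
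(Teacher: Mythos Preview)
Your approach matches the paper's: a local double-cover equation for the multiplicity-two fiber, then ADE recognition of the branch curve $B=F_p\cup\Theta$. The paper organizes the singular-$\Theta$ case more coarsely, via the dichotomy of whether $F_p$ shares the tangent direction of $\Theta$'s tangent cone: if not, a parametrized-Morse coordinate change (fixing the equation of $F_p$) puts $(F_p,\Theta)=(\{y=0\},\{x^2=y^k\})$ and one reads off $D$-type; if yes, slc of $(Y,(\tfrac12+\epsilon_1)R)$ forces $k\le 3$, leaving only $D_4$ or $E_7$.

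Your finer enumeration is correct in spirit but not quite exhaustive as written. The non-ADE configurations are \emph{all} those with $\Theta$ of type $A_m$, $m\ge 3$, and $F_p$ tangent to $\Theta$'s (unique) tangent direction---this includes the irreducible cases $m=4,6,\ldots$, not just the tacnodal $m=3$ you isolate (``common tangent of both branches'' presupposes two branches). Your single blow-up still disposes of all of them, once you note that what matters is that the strict transform $\tilde\Theta$ retains multiplicity~$2$ at the infinitely near point: then $E$, $\tilde F_p$, and $\tilde\Theta$ (counted with multiplicity $2$) all pass through that point, and the next exceptional divisor acquires coefficient $(\tfrac12+3\epsilon_1)+(\tfrac12+\epsilon_1)+2(\tfrac12+\epsilon_1)-1=1+6\epsilon_1>1$. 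One further case your trichotomy omits, though it is harmless, is $\Theta$ a node with $F_p$ tangent to exactly one of its branches: here $B$ is still of $D$-type (namely $D_6$ or $D_8$, by the $4$-section bound $(\Theta,F_p)_q\le 4$), so no extra exclusion is needed.
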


\begin{proof}
    For the first statement, write local equations as follows.  
    Let $x=0$ be a local equation of $F_p$.  
    The double cover of $Y$ branched along $R$ has a fiber which locally is of the form
    \[
        \Spec k[x,y,w]/(w^2 - x f(x,y),\, x),
    \]
    
    which is generically non-reduced of multiplicity~$2$; 
    its reduced structure is necessarily isomorphic to $F_p \simeq \bP^1$.

    If $\Theta$ is smooth at $q$, then by 
    Lemma~\ref{lem:at worst double point} we have 
    $k := (\Theta,F_p)_q \le 4$, and the branch divisor has a local equation 
    $x - y^k = 0$.  
    Thus the double cover has an $A_{2k-1}$-singularity.

    If $\Theta$ is singular at $q$, then by 
    Lemma~\ref{lem:at worst double point}, $q$ is a double point of $\Theta$, so 
    locally $\Theta$ is given by $x^2 - y^k = 0$. 
    \begin{itemize}
        \item If $F_p$ is not given by $x=0$, then after a coordinate change we may assume $F_p$ is given by $y=0$. In this case the branch divisor has a $D_k$-singularity, hence so does $X_G$.
        \item If $F_p$ is given by $x=0$, then the condition that $(Y,(\tfrac{1}{2}+\epsilon)R)$ is slc forces $k \le 3$, and the resulting singularity is of type $E_7$, hence $X_G$ has either a $D_4$ or an $E_7$-singularity at~$q$.
    \end{itemize}
\end{proof}

\begin{example}\textup{
    Let $R \subseteq \bP^1 \times \bP^1$ be a reduced divisor of class 
    $\mathcal{O}(4,4)$, consisting of the union of eight distinct fibers of the 
    two projections.  
    Then the double cover $f: X \to \bP^1 \times \bP^1$ branched along $R$ 
    is a K3 surface with $16$ $A_1$-singularities, and the composition of $f$ 
    with either projection 
    $p_i: \bP^1 \times \bP^1 \to \bP^1$ has four multiple fibers of multiplicity $2$.
}\end{example}

\begin{lemma}\label{lem:at worst double point 2}
    Assume that $Y_G$ is of type~\textup{III}, i.e.\ it is generically nodal along a curve $\Gamma$.
    On any irreducible component of the normalization of $Y_G$, let $q \in F_p$ be a singular point of $R$, and write
    \[
        \Theta := R|_G - F_p
    \]
    for the residual curve to $F_p$ in $R|_G$. Then:
    \begin{enumerate}
        \item $q$ is disjoint from $\Gamma$;
        \item $\Theta$ has at worst a double point at $q$; and
        \item if $\Theta$ is smooth at $q$, then the local intersection number 
        $(\Theta, F_p)_q \leq 2$.
    \end{enumerate}
\end{lemma}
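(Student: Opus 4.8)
The plan is to mimic the proof of Lemma~\ref{lem:at worst double point} for the type~I/II case, the only new ingredient being control of how $F_p$ meets the double locus $\Gamma$. For assertion~(1) I would in fact prove the stronger statement $F_p\cap\Gamma=\emptyset$. By Lemma~\ref{lem:F_not_double_locus}, $F_p$ is not contained in $\Gamma$ and $R$ has multiplicity $1$ along $F_p$, so $F_p$ occurs in the boundary $(\tfrac12+\epsilon_1)R+\epsilon_2F$ with coefficient $\ge\tfrac12+\epsilon_1>\tfrac12$. If $F_p$ met $\Gamma$, then on the normalization $\nu\colon Y_G^\nu\to Y_G$ the conductor $\bar\Gamma$ (with coefficient $1$) and the strict transform $\nu^{-1}_*F_p$ (with coefficient $>\tfrac12$) would pass through a common point; since slc of $(Y,(\tfrac12+\epsilon_1)R+\epsilon_2F)$ forces $(Y_G^\nu,\bar\Gamma+\nu^{-1}_*((\tfrac12+\epsilon_1)R+\epsilon_2F))$ to be log canonical, and a log canonical pair cannot have two boundary divisors through a point with coefficient sum exceeding $1$, this is a contradiction. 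Hence $q\in F_p$ forces $q\notin\Gamma$. (Alternatively, this is immediate from Lemma~\ref{lem: pure branch locus 1} and Corollary~\ref{cor:pure branch locus 2}, which place $R$ inside the smooth locus of $Y$ over $C^{\sm}$, hence away from $\Gamma$.)

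For assertion~(2), now that $q$ lies off $\Gamma$, the chosen component of $Y_G^\nu$ is a smooth surface near $q$ and there $R|_{Y_G^\nu}$ equals $F_p+\Theta$. From slc, the pair $\bigl(Y_G^\nu,(\tfrac12+\epsilon_1)(F_p+\Theta)\bigr)$ is log canonical near $q$; since on a smooth surface the log canonical threshold of a reduced curve having a point of multiplicity $m$ is at most $2/m$ there, a multiplicity $\ge 4$ would force $\tfrac12+\epsilon_1\le\tfrac12$, which is absurd. Hence $\mult_q(F_p+\Theta)\le 3$, and since $F_p$ is smooth this gives $\mult_q\Theta\le 2$; it is essential here that the boundary coefficient is strictly larger than $\tfrac12$.

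For assertion~(3), I would compute the intersection number $\Theta\cdot F_p$ on the chosen component. Over $G^\circ$ the general fiber of $Y_G\to G$ is a dumbbell curve cut by $R$ in four distinct points, and --- using the description of type~III surfaces in Theorem~\ref{thm: main theorem 2 of ISZ25} together with the fact that the double cover $X_G\to G$ has connected arithmetic-genus-one fibers --- these four points must split $2+2$ on the two rational components; equivalently the restriction of $R$ to the chosen component of $Y_G^\nu$ is a bisection of the ruling. Since $F_p$ is a fiber, $F_p^2=0$, so $\Theta\cdot F_p=R|_{Y_G^\nu}\cdot F_p=2$, and because $\Theta$ does not contain $F_p$ as a component (again $R$ has multiplicity $1$ along $F_p$), smoothness of $\Theta$ at $q$ yields $(\Theta,F_p)_q\le\Theta\cdot F_p=2$.

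I expect assertion~(3) to contain the main obstacle: namely, verifying that the four branch points are distributed $2+2$ --- rather than $3+1$ or $4+0$ --- on the components of a general dumbbell fiber, so that $R$ restricted to a single normalization component is genuinely a bisection. This rests on the precise shape of type~III surfaces from \cite{ISZ25} (a projective bundle over a base curve with at most two components, glued along an involution transitive on those components) combined with the requirement that the branched cover be an honest elliptic fibration; once this is pinned down, the intersection-number bound is purely formal. By contrast, assertion~(1) is routine apart from the normalization/conductor bookkeeping, and assertion~(2) is a one-line log canonical threshold computation.
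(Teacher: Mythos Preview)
Your arguments for (2) and (3) are fine and match the paper's approach, which simply defers to Lemma~\ref{lem:at worst double point}. The difficulty you flag in (3)---that $R$ restricts to a bisection on each component of $Y_G^\nu$---is already recorded in the paper (see the proof of the proposition immediately preceding Lemma~\ref{lem:at worst double point 2}).

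There is, however, a genuine gap in your treatment of (1). Your stronger claim $F_p\cap\Gamma=\emptyset$ is \emph{false}: on a component of the normalization $Y_G^\nu$ the conductor $\bar\Gamma$ is a section of the $\bP^1$-bundle and $F_p$ is a fiber, so they meet in exactly one point. Correspondingly, your justification ``a log canonical pair cannot have two boundary divisors through a point with coefficient sum exceeding $1$'' is incorrect: on a smooth surface, two smooth curves meeting transversally with coefficients $1$ and $\tfrac12+\epsilon_1$ give a log canonical pair. Your alternative via Lemma~\ref{lem: pure branch locus 1} and Corollary~\ref{cor:pure branch locus 2} does not rescue this, since those statements place $R$ in the smooth locus of the \emph{threefold} $Y$, whereas $\Gamma$ is the singular locus of the \emph{surface} $Y_G$; the threefold can be (and is) smooth at generic points of $\Gamma$, so $R\subset Y^{\sm}$ does not force $R\cap\Gamma=\emptyset$.

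The paper's argument is different and uses the hypothesis that $q$ is a \emph{singular} point of $R$ in an essential way. Since $R|_G=F_p+\Theta$ and $F_p$ is smooth on the normalization, singularity of $R$ at $q$ forces $\Theta$ to pass through $q$ as well. If in addition $q\in\bar\Gamma$, then on $Y_G^\nu$ the boundary $\bar\Gamma+(\tfrac12+\epsilon_1)(F_p+\Theta)$ has three distinct branches through $q$ with total multiplicity $1+2(\tfrac12+\epsilon_1)>2$, which is not lc on a smooth surface. This yields $q\notin\Gamma$ without the stronger (false) disjointness claim.
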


\begin{proof}
    If $q \in \Gamma$, then $F_p$, $\Theta$, and $\Gamma$ all pass through $q$.
    Since $R|_G = F_p + \Theta$, the divisor $R$ contains three distinct branches
    through the node of $Y_G$ at $q$, and hence the pair 
    \[
        \bigl(Y_G, (\tfrac{1}{2}+\epsilon)R\bigr)
    \]
    is not slc at $q$. Thus $q \notin \Gamma$. The remaining assertions follow exactly as in 
    Lemma~\ref{lem:at worst double point}.
\end{proof}

By the same argument as in Theorem~\ref{thm:classification of fibers over nodes}, 
one obtains the following analogue.

\begin{prop}\label{prop:singularity of type III}
    Assume that $Y_G$ is of type~\textup{III}.  
    Then the morphism $X_G \to G$ has a double fiber over $p$, whose reduced structure 
    is a dumbbell curve.  
    Locally over $p \in G^\circ$:
    \begin{enumerate}
        \item If $\Theta$ is smooth at $q$, then $X_G$ has an 
        $A_{2k-1}$-singularity with $k \le 2$.
        \item If $\Theta$ is singular at $q$, then $X_G$ has either a 
        $D_k$-singularity or an $E_7$-singularity.
    \end{enumerate}
\end{prop}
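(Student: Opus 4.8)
The plan is to mirror the proof of Proposition~\ref{prop:singularity of type I and II}, reducing everything to a local computation on a \emph{smooth} surface by invoking Lemma~\ref{lem:at worst double point 2}; the only genuinely new points are the non-normality of $Y_G$ along its double curve $\Gamma$ and the resulting dumbbell shape of the general fiber of $Y_G\to G$.

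First I would pin down the fiber of $X_G\to G$ over $p$. By Lemma~\ref{lem:F_not_double_locus}, $p$ is a smooth point of $C$ and $R|_G$ contains $F_p$ with multiplicity exactly $1$; writing $x=0$ for a local equation of $F_p$, we have $R|_G=\{x\,f(x,y)=0\}$ locally, so the double cover $X_G\to Y_G$ branched along $R|_G$ has local equation $w^2=x\,f(x,y)$, whose fiber over $p$ is $\{w^2=0\}$ --- non-reduced of multiplicity $2$ with reduced structure $(F_p)_{\mathrm{red}}$. Since $Y_G$ is of type~III, its general fiber over $G^\circ$, and hence $F_p$, is a dumbbell curve (Theorem~\ref{thm: main theorem 2 of ISZ25}). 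The point that needs care is the behaviour near the node of $F_p$, which lies on $\Gamma$: by Lemma~\ref{lem:at worst double point 2}(1) the singular points of $R$ avoid $\Gamma$, so near that node $R$ equals $F_p$ and $Y_G$ is \'etale-locally $\{uv=0\}$; substituting into $w^2=t$ (with $t=0$ the equation of $F_p$) gives $X_G\cong\{uv=0\}$ locally, which is still nodal, and the fiber of $X_G\to G$ over $p$ is a double fiber whose reduced structure is nodal, i.e.\ a dumbbell curve.

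Next I would determine the surface singularity of $X_G$ at a singular point $q\in F_p$ of $R$. By Lemma~\ref{lem:at worst double point 2}(1) we have $q\notin\Gamma$, so $Y_G$ is smooth at $q$ and, locally near $q$, $X_G$ is a double cover of a smooth surface; moreover, on the component of the normalization of $Y_G$ through $q$ the residual curve $\Theta:=R|_G-F_p$ is a $2$-section of the ruling --- \emph{this is the only change from the $4$-section of Proposition~\ref{prop:singularity of type I and II}}, and it is exactly what shortens the list. Concretely: if $\Theta$ is smooth at $q$, then $(\Theta,F_p)_q=:k\le 2$ by Lemma~\ref{lem:at worst double point 2}(3), the branch divisor is locally $x(x-y^k)=0$, and completing the square in $w^2=x(x-y^k)$ yields an $A_{2k-1}$-singularity with $k\le 2$. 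If $\Theta$ is singular at $q$, then by Lemma~\ref{lem:at worst double point 2}(2) it has an ordinary double point, so $\Theta$ is locally $x^2-y^k=0$; if $F_p$ is the coordinate axis $\{y=0\}$ then $w^2=y(x^2-y^k)$ has a $D_m$-singularity, while if $F_p$ is the axis $\{x=0\}$ then imposing that $(Y_G,(\tfrac12+\epsilon)R)$ be slc forces $k\le 3$ and $w^2=x(x^2-y^k)$ has a $D_4$- or $E_7$-singularity.

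The main obstacle I anticipate is precisely the local analysis along $\Gamma$: one must ensure that the branch divisor never meets the double curve at a point where $R$ is singular --- so that the double cover does not acquire a worse-than-nodal singularity over the node of $F_p$ --- and that the reduced structure of the double fiber is genuinely a dumbbell curve rather than, say, a pinched banana curve or a four-leaf-clover curve (both of which do occur in Section~\ref{subsec:Singular fibers over the smooth locus: no ramified fibers}). This is exactly what Lemma~\ref{lem:at worst double point 2}(1) together with the local model for a double cover along a node supplies; granting that, the identification of the singularity types is the same bookkeeping as in the type~I/II case, with ``$2$-section'' in place of ``$4$-section''.
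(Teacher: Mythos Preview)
Your proposal is correct and follows essentially the same approach as the paper, which simply states ``By the same argument as in Theorem~\ref{thm:classification of fibers over nodes}, one obtains the following analogue'' without further detail. You fill in exactly the expected argument---mirroring Proposition~\ref{prop:singularity of type I and II} with Lemma~\ref{lem:at worst double point 2} replacing Lemma~\ref{lem:at worst double point}---and in fact go beyond the paper by carefully verifying that the double cover remains nodal near the node of $F_p$, so that the reduced fiber is genuinely a dumbbell curve.
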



\section{Compactification of moduli of hyperelliptic K3 surfaces}

An important application of Theorem~\ref{thm:elliptic with a bisection moduli} is the compactification of the moduli stack of hyperelliptic K3 surfaces.

\begin{defn}
    A \emph{polarized K3 surface} $(S,H)$ consists of a projective surface $S$ with at worst ADE singularities such that $\omega_S\sim \mtc{O}_S$ and $\oH^1(S,\mtc{O}_S)=0$, and an ample line bundle $H$.
\end{defn}

Recall that one has the following trichotomy for polarized K3 surfaces.

\begin{theorem}[\cite{May72,SD}]
Let $(S,L)$ be a \emph{primitively polarized} K3 surface with $(L^2)=2g-2$. Then exactly one of the following holds.
\begin{enumerate}
    \item \textup{(Generic case)} The linear system $|L|$ is very ample, and the morphism $
        \phi_{|L|}\colon S \hookrightarrow |L|^{\vee} \simeq \mathbb{P}^g$ embeds $S$ as a surface of degree $2g-2$ in $\mathbb{P}^g$. In this case, a general member of $|L|$ is a smooth non-hyperelliptic curve.
    
    \item \textup{(Hyperelliptic case)} The linear system $|L|$ is base-point free, and the induced morphism $\phi_{|L|}$ realizes $S$ as a double cover of a normal surface of degree $g-1$ in $\mathbb{P}^g$. In this case, a general member of $|L|$ is a smooth hyperelliptic curve, and $|2L|$ is very ample.
    
    \item \textup{(Unigonal case)} The linear system $|L|$ has a fixed component $E$, which is a smooth rational curve. The linear system $|L-E|$ defines a morphism
    $S \longrightarrow \mathbb{P}^g
    $ whose image is a rational normal curve. In this case, a general member of $|L-E|$ is a union of smooth elliptic curves, and $|2L|$ is base-point free.
\end{enumerate}
\end{theorem}

\begin{remark}
\textup{
The integer $g$ is called the \emph{genus} of the polarized K3 surface $(S,L)$, and it satisfies $g \geq 2$. When $g=2$, a general polarized K3 surface $(S,L)$ is realized as a double cover of $\mathbb{P}^2$ branched along a sextic curve; this case is exceptional among all genera.  
Therefore, for the remainder of this section, we will assume $g \geq 3$.
}
\end{remark}

Let $(S,L)$ be a hyperelliptic K3 surface of genus $g$. Then the Picard group $\Pic(S)$ contains a primitive rank-two sublattice
\[\Lambda^g \ \coloneqq\ 
\Lambda^{g}_{2,0} \ \coloneqq \ 
\begin{pmatrix}
2g-2 & 2 \\[2pt]
2 & 0
\end{pmatrix},
\]
generated by two effective divisor classes $L$ and $F$. The class $L$ defines a double cover
\[
S \longrightarrow \mathbb{F}_n,
\qquad 0 \leq n \leq 4,
\]
see e.g. \cite[p.~1]{Reid76}. The divisor $F$ is the pullback of the fiber class of $\mathbb{F}_n$, and hence it satisfies $F^2=0$ and is nef; it induces an elliptic fibration
\[
f \colon S \longrightarrow \mathbb{P}^1.
\] The \emph{branch divisor} $R_{\bF_n}$ on $\mathbb{F}_n$ is smooth and of the class
\[
4\mathfrak{e} + 2(n+2)\mathfrak{f},
\]
where $\mathfrak{e}$ and $\mathfrak{f}$ denote respectively the classes of the negative section and of a fiber of the ruling on $\mathbb{F}_n$. If moreover $(S,L)$ is a general hyperelliptic K3 surface, then $n=0$ when $g$ is even and $n=1$ when $g$ is odd, reflecting the parity constraint imposed by the discriminant form of $\Lambda^{g}_{2,0}$. By the Riemann-Roch theorem, the class
\[
B := L - \left\lfloor \frac{g}{2} \right\rfloor F
\]
is effective and satisfies
\[
B \cdot F = 2,
\]
and hence defines a bisection (not necessarily integral) of the elliptic fibration $f$.

\begin{lemma}\label{lem:KSBA-stable}
    With the notations as above. Let $R\subseteq S$ be the ramification locus of $S\rightarrow \bF_n$, and $F_{\sing}$ be the sum of the singular locus of $f$, counted via the canonical bundle formula. Then $(S,\epsilon_1 R+\epsilon_2 F_{\sing})$ is KSBA-stable for any $0< \epsilon_1\ll\epsilon_2\ll1$. 
\end{lemma}

\begin{proof}
    It suffices to show that $$K_S+\epsilon_1 R+\epsilon_2 F_{\sing}\sim_{\bQ} \epsilon_1 R+\epsilon_2 F_{\sing}$$ is ample. As $(R.F_{\sing})>0$ and $\epsilon_1\ll \epsilon_2$, then one has $(\epsilon_1 R+\epsilon_2 F_{\sing})^2>0$. For any integral curve $C\subseteq S$, if $C$ is not contained in a fiber of $f$, then $(\epsilon_1 R+\epsilon_2 F_{\sing}.C)>0$, because $(F.C)>0$ and $\epsilon_1\ll \epsilon_2$. If $C$ is a component of an $f$-fiber, then $$\textstyle (\epsilon_1 R+\epsilon_2 F_{\sing}.C)\ =\ \epsilon_1(R.C)\ =\ \frac{\epsilon_1}{2}(\pi^*R_{\bF_n}.C)\ =\ \frac{\epsilon_1}{2}(R_{\bF_n}.\pi_*C)\ = \ \frac{\epsilon_1}{2}(4\mtf{e}+2(n+2)\mtf{f}.m\mtf{f})  $$ for some $f>0$, which is positive. Therefore, the divisor $\epsilon_1 R+\epsilon_2 F_{\sing}$ is ample as desired.
\end{proof}

\begin{defn}
    For any number $0<c<1$, we set $H_c\coloneqq(1-c)L+c F\in \Lambda^g\otimes \bR$ and we say that $c$ is \emph{very irrational} if $H_c\notin \Lambda'\otimes \bR $ for any proper sublattice $\Lambda'\subsetneq \Lambda^g$. 
\end{defn}

 The sublocus of hyperelliptic K3 surfaces inside the moduli stack $\mathcal{F}_g$ of polarized K3 surfaces of genus $g$ need not be normal. The correct object to consider is the moduli stack of lattice polarized K3 surfaces, which is the normalization of the hyperelliptic locus; see Proposition~\ref{prop:normalization of the hyperelliptic divisor}.

\begin{defn}[\textup{cf.  \cite{Dol96,AE25}}]
    \textup{
    For any very irrational $0<c<1$, let $\mts{F}_{\Lambda^g,c}$ be the moduli stack  which sends a base scheme $T$ to 
\[
\left\{(f:\mts{S}\rightarrow T;\varphi)\left| \begin{array}{l} \mts{S}\to T\textrm{ is a proper flat morphism, each geometric fiber}\\ \textrm{$\mts{S}_{\bar{t}}$ is an ADE K3 surface, and $\varphi:\Lambda^g\longrightarrow\Pic_{\mts{S}/T}(T)$ is }\\ \textrm{a group homomorphism such that the induced map }\\ \textrm{$\varphi_{\bar{t}}:\Lambda^g\rightarrow \Pic(\mts{S}_{\bar{t}})$ is an isometric primitive embedding of}\\ \textrm{lattices and that $\varphi_{\bar{t}}(H_c)\in \Pic(\mts{X}_{\bar{t}})_{\mb{R}}$ is an ample class.} \end{array}\right.\right\}.
\] We call $\mts{F}_{\Lambda^g,c}$ a \emph{moduli stack of lattice polarized K3 surfaces}, where the lattice is $\Lambda^g$ and the polarization is given by $H_c=(1-c)L+c F.$}
\end{defn}

\begin{remark}\textup{
   In the definition of the moduli stack $\mathcal{F}_{\Lambda^g,c}$, the class $L$ is not required to be ample; only $H_c$ is assumed to be ample. Nevertheless, in the situations we shall consider, the class $L$ will in fact turn out to be ample.
}
\end{remark}

\begin{theorem}[\textup{cf. \cite[Proposition 5.4, Theorem 5.5]{AE25}}] The stack $\mts{F}_{\Lambda^g,c}$ is a separeted smooth DM stack, and is independent of the choice of $c$.
\end{theorem}

\begin{remark}
\textup{
More precisely, for any two parameters $0 < c \neq c' < 1$, the stacks $\sF_{\Lambda_g,c}$ and $\sF_{\Lambda_g,c'}$ are canonically isomorphic, although their universal families may differ. There is a natural wall--crossing structure: there exist finitely many critical values
\[
0 = c_0 < c_1 < \cdots < c_n = 1
\]
such that for each $i = 0, \ldots, n-1$, the universal family over $c \in (c_i, c_{i+1})$ is independent of the specific choice of $c$. The intervals $(c_i, c_{i+1})$ are called \emph{small cones}. Equivalently, the small cones are the connected components of the complement of the set of parameters $c$ for which there exists a vector 
\[
v \in \Lambda_{\mathrm{K3}} \setminus \Lambda^{\perp}
\quad\text{with}\quad v^2 = -2,
\]
such that $(H_c \cdot v) = 0$ and the lattice $\langle \Lambda, v \rangle \subseteq \Lambda_{\mathrm{K3}}$, is hyperbolic. See \cite[Definition~4.9, Proposition~4.14, Definition~5.2]{AE25}.
}
\end{remark}

\begin{lemma}\label{lem:nefness of F}\label{lem:no walls for moduli of K3}
    Let $0<\epsilon\ll 1$ be a very irrational number, and let 
    \[
    (S,(1-\epsilon)L+\epsilon F)\in \mts{F}_{\Lambda^g,\epsilon}
    \]
    be a $\Lambda^g$-polarized K3 surface of genus $g\geq 3$. Then the class $F$ is nef. In particular, $H_c$ is ample for any $0<c<1$, i.e. all the $c\in (0,1)$ are in the same small cone.
\end{lemma}

\begin{proof}
    By assumption $H_{\epsilon}=(1-\epsilon)L+\epsilon F$ is ample; in particular, $L$ is nef and big. Let $\pi:\widetilde{S}\to S$ denote the minimal resolution, so $\widetilde{S}$ is a smooth K3 surface. Denote by $\widetilde{L}$ and $\widetilde{F}$ the pullbacks of $L$ and $F$, respectively.

    Suppose that $F$ was not nef. Then $\widetilde{F}$ is also not nef, so there exists a connected smooth rational curve $\widetilde{C}\subset \widetilde{S}$ such that $C:=\pi(\widetilde{C})$ satisfies
    \[
    a := (\widetilde{C}\cdot \widetilde{L}) = (C\cdot L)\ge 0,
    \qquad
    b := (\widetilde{C}\cdot \widetilde{F}) = (C\cdot F) < 0.
    \]
    Since $H_\epsilon$ is ample, one has
    \[
    (1-\epsilon)a + \epsilon b > 0.
    \] Consider the sublattice of $\Pic(\widetilde{S})$ generated by $\widetilde{L}$, $\widetilde{F}$, and $\widetilde{C}$; its intersection matrix is
    \[
    \begin{pmatrix}
        2g-2 & 2 & a\\
        2 & 0 & b\\
        a & b & -2
    \end{pmatrix}.
    \]
    By the Hodge index theorem, its determinant must be positive. A direct computation shows that this determinant is
    \[
        (2-2g)b^2 + 8 + 4ab > 0.
    \]
    Since $g\ge 3$, this determinant is at most \(4(b(a-b)+2)\). Hence the only possible solution is \(a=0\) and \(b=-1\), contradicting the inequality \((1-\epsilon)a+\epsilon b>0\). Thus $F$ must be nef.
\end{proof}

\begin{corollary}\label{cor:isomorphic moduli of K3}
   For every fixed $g\ge 3$ and every $0<c<1$, the moduli stacks $\mtc{F}_{\Lambda^g,c}$ are canonically isomorphic, and their universal families are isomorphic. Moreover, for each $g\ge 3$ there is a canonical isomorphism
   \[
       \phi_g : \mts{F}_{\Lambda^g,c} \xrightarrow{\ \sim\ } \mts{F}_{\Lambda^{g+2},c},
   \]
   whose universal families of K3 surfaces are isomorphic and whose effect on the polarizing lattices is given by $L\mapsto L+F$ and $F\mapsto F$.
\end{corollary}

\begin{proof}
    Fix $g\ge 3$ and consider the moduli stack $\mts{F}_{\Lambda,\epsilon}$ with $0<\epsilon\ll 1$. By Lemma~\ref{lem:no walls for moduli of K3}, the divisor $H_c$ is ample for every $0<c<1$ and every $(S,H_{\epsilon})\in \mts{F}_{\Lambda,\epsilon}$. Therefore, all $0<c<1$ lie in the same generalized small chamber, and the corresponding stacks $\mtf{F}_{\Lambda,c}$ have isomorphic universal families.

    Now let $(S,(1-\epsilon)L+\epsilon F)\in \mts{F}_{\Lambda^g,\epsilon}$ with $g\ge 3$. Lemma~\ref{lem:no walls for moduli of K3} implies that $F$ is nef, hence $L+F$ and $(1-\epsilon)(L+F)+\epsilon F$ are ample. This defines a natural morphism
    \[
    \phi_g:\mts{F}_{\Lambda^g,\epsilon} \longrightarrow \mts{F}_{\Lambda^{g+2},\epsilon},
    \qquad
    (S,(1-\epsilon)L+\epsilon F) \longmapsto (S,(1-\epsilon)(L+F)+\epsilon F).
    \] Conversely, given $(S,(1-\epsilon)L+\epsilon F)\in \mts{F}_{\Lambda^{g+2},\epsilon}$, we claim that $L-F$ is nef, and hence the inverse morphism of $\phi_g$ is constructed. Since $(L-F)^2>0$, nefness implies that $(1-\epsilon)(L-F)+\epsilon F$ is ample, giving the inverse morphism. If $L-F$ were not nef, then for some curve $C$ we would have
    \[
        a := (L-F)\cdot C < 0,
        \qquad
        b := F\cdot C \ge 0.
    \]
    Repeating the intersection-matrix argument in the proof of Lemma~\ref{lem:no walls for moduli of K3} shows that the only numerical possibility is $a=-1$ and $b=0$, contradicting the fact that $L$ is nef. Thus $L-F$ must be nef, completing the proof.
\end{proof}

Therefore, it is reasonable to denote $\mts{F}_{\Lambda^g,c}$ simply by $\mts{F}_{\Lambda^g}$ for any $0<\epsilon<1$. Moreover, there are essentially only two stacks, depending on the parity of $g$. Since the results in the remainder of this section are insensitive to $g$, we henceforth write $\mts{F}_{\Lambda}$ for $\mts{F}_{\Lambda^g}$, regardless of parity.
\medskip

By the proof of Corollary \ref{cor:isomorphic moduli of K3}, for any $g\ge 5$ and any $
(S,(1-c)L+cF)\in \mts{F}_{\Lambda}$, the divisor $L$ is ample. We observe that that this fails for $g=3,4$. Indeed, for $g=3$, let $\pi:S\rightarrow \mathbb{F}_2$ be the double cover branched along a smooth divisor of class $4(\mathfrak{e}+2\mathfrak{f})$. Then $S$ is a K3 surface, and $L := \pi^*(\mathfrak{e}+2\mathfrak{f})$ is not ample, although
$(S,(1-c)L+c\,\pi^*\mathfrak{f})$ is a $\Lambda^3$-polarized K3 surface for $0<c<1$. Similarly, for $g=4$, let $\pi:S\longrightarrow \mathbb{F}_3$ be the double cover branched along a smooth divisor of class $4(\mathfrak{e}+2\mathfrak{f})$. Then $S$ is a K3 surface, and $L := \pi^*(\mathfrak{e}+3\mathfrak{f})$ is not ample, but
$(S,(1-c)L+c\,\pi^*\mathfrak{f})$
is a $\Lambda^4$-polarized K3 surface for $0<c<1$.

\medskip

For every $g \ge 3$, the hyperelliptic polarized K3 surfaces $(S,L)$ form a divisor in the moduli stack $\mathscr{F}_g$ of primitively polarized K3 surfaces of genus $g$. When $g = 4k+3 \ge 7$, the hyperelliptic locus has two components; one of these generically consists of double covers of $\bF_4$ branched along a divisor of the class $4\mtf{e} + 12\mtf{f}$, which is a disjoint union of the negative section and a trisection. The K3 surfaces parametrized by this component are always elliptic with a section, and we will not consider them here. For all other genera $g \ge 3$, the hyperelliptic locus is irreducible. We refer to this irreducible component as the \emph{hyperelliptic divisor} and denote it by $\mts{D}^g_{2,0}$.

\begin{remark}
\textup{
As stated in \cite[Page~2]{Reid76}, the hyperelliptic locus in $\sF_g$ has two components for all odd genera $g \ge 5$.  
However, when $g = 4k+1$, the polarization is not primitive, so this case does not lie in the moduli stack $\mathscr{F}_g$ of \emph{primitively} polarized K3 surfaces.
}
\end{remark}

\begin{prop}\label{prop:normalization of the hyperelliptic divisor}
For $g\ge 4$, the moduli stack $\mts{F}_{\Lambda}$ is isomorphic to the normalization $\mts{D}^{g,\nu}_{2,0}$ of the hyperelliptic divisor $\mts{D}^g_{2,0}$.
\end{prop}

\begin{proof}
We prove the statement for $g$ even; the odd case is identical. Let $0<\epsilon\ll 1$ be very irrational, and let 
\[
(\mts{S},(1-\epsilon)\cL+\epsilon\cF)\longrightarrow \sF_{\Lambda,\epsilon}=\sF_{\Lambda}
\]
be the universal family. Then $\cL$ is big and nef over $\sF_{\Lambda}$; taking the ample model of $\cL$ (which is an isomorphism for $g\ge 5$) yields a natural morphism
\[
\sF_{\Lambda} \longrightarrow \mts{F}_{g}
\]
by the universality of $\mts{F}_g$, whose image is contained in $\mts{D}^g_{2,0}$. Since $\mts{F}_{\Lambda}$ is smooth, there exists a morphism
\[
\alpha:\sF_{\Lambda}\longrightarrow \mts{D}^{g,\nu}_{2,0}.
\]
We claim that $\alpha$ is representable, birational, and finite; the result then follows from Zariski’s main theorem for DM stacks.

\noindent\textbf{Quasi-finiteness.}
Let $[(\overline{S},\overline{L})]\in \mts{D}^g_{2,0}$ be a hyperelliptic K3 surface. Let $\pi:\widetilde{S}\to \overline{S}$ be the minimal resolution. If 
\[
[(S,(1-\epsilon)L+\epsilon F)]\in \mts{F}_{\Lambda,\epsilon}
\]
lies above it, then $\widetilde{S}$ factors as 
\[
\widetilde{S}\to S\to \overline{S},
\]
contracting finitely many $(-2)$-curves; thus there are finitely many choices of $S$. The class $L$ is the pullback of $\overline{L}$. By \cite[Proposition 11.1.3]{Huy16}, a K3 surface has only finitely many elliptic fibrations up to isomorphism, so there are finitely many choices of $F$. Hence $\alpha$ is quasi-finite.

\noindent\textbf{Birationality.}
A general $(S,L)\in \mts{D}^g_{2,0}$ is a double cover of $\mathbb{F}_1$, and $L = \pi^*(\mathfrak{e} + (\frac{g}{2}-1)\mathfrak{f})$. If $[(S,(1-c)L+cF)]$ lies in the fiber of $\alpha$ over $[(S,L)]$, then $F$ must be the pullback of $\mathfrak{f}$. Hence $\alpha$ is generically injective. Since both stacks are irreducible of dimension $18$, then $\alpha$ is birational.

\noindent\textbf{Representability.}
It suffices to check that stabilizers inject. As $\epsilon$ is very irrational, any stabilizer of $(S,(1-\epsilon)L+\epsilon F)$ preserves $L$ and hence acts on the ample model of $L$. If the induced automorphism is trivial, then the stabilizer is trivial, proving the representability.

\noindent\textbf{Properness.} Finally, we prove that $\alpha$ is proper, and therefore finite. Since both stacks are separated, it suffices to verify the existence part of the valuative criterion. Let $h:(\overline{\mts{S}},\overline{\mtc{L}})\longrightarrow (0\in B)$ be a one–parameter family of hyperelliptic K3 surfaces of genus $g$ such that there exists a commutative diagram
\[
\xymatrix{
 & (\mts{S}^{\circ},(1-c)\mtc{L}^{\circ}+c\mtc{F}^{\circ}) 
 \ar[rr]^{g^\circ} \ar[dr]_{\widetilde{h}^{\circ}} 
 & & (\overline{\mts{S}}^{\circ},\overline{\mtc{L}}^{\circ}) 
 \ar[dl]^{h^{\circ}} \\
 & & B^{\circ} &
}
\]
where $B^{\circ}=B\setminus\{0\}$, and $g^\circ$ is the ample model morphism over $B^{\circ}$ with respect to $\mtc{L}$. We may assume that $g^\circ$ is an isomorphism. Furthermore, possibly after a finite base change, there exists a section of $\mtc{F}^{\circ}$ over $B^{\circ}$; we continue to denote it by $\mtc{F}^{\circ}$. Let $\ove{\mtc{F}}$ be the closure of $\mtc{F}^{\circ}$ inside $\overline{\mts{S}}$ as a subscheme. Take a $\mathbb{Q}$-factorialization of $\ove{\mts{S}}$ and run a relative $\ove{\mtc{F}}$-MMP over $\ove{\mts{S}}$. This yields a family of K3 surfaces 
\[
\sS\longrightarrow B,
\]
a birational morphism 
$g:\sS\longrightarrow \ove{\sS}$ which is isomorphic in codimension one, and a $\mathbb{Q}$-Cartier divisor $\cF$ such that the divisor 
$(1-\epsilon){g}^{*}\mtc{L}+\epsilon{\cF}$ is relatively ample over $B$. We claim that $\mtc{F}$ is a Cartier divisor. Then every fiber of the pair $
\bigl({\sS},(1-\epsilon){g}^*\ove{\mtc{L}}+\epsilon{\cF}\bigr)$
over $b\in B$ is a $\Lambda$-polarized K3 surface: this follows because the general fiber has this property and the intersection numbers remain constant throughout the family.
The line bundle $\overline{\cL}$ is ample and base-point free over $B$, and it induces a double cover 
\[
\overline{\sS} \longrightarrow \overline{\sT},
\]
where the general fiber of $\overline{\sT} \to B$ is a Hirzebruch surface $\bF_n$, and the special fiber over $0 \in B$ is either $\bF_n$ or $\bP(1,1,m)$ for some $m \le 4$.  

In the former case, there exists a prime divisor $\overline{\cG}$ on $\overline{\sT}$, Cartier over $B$, such that for a general $b \in B$ its restriction to the fiber is the fiber class of the fibration $\bF_n \to \bP^1$. The divisor $\cF$ is then the pullback of $\overline{\cG}$ along
\[
\sS \longrightarrow \overline{\sS} \longrightarrow \overline{\sT},
\]
and therefore is Cartier. In this situation, $\overline{\cF}$ is already Cartier, and no $\bQ$-factorialization or MMP is required.

In the latter case, we take a $\bQ$-factorialization of $\overline{\sT}$, denoted by $\sT$, such that the special fiber over $0 \in B$ becomes isomorphic to $\bF_n$, and the strict transform of $\overline{\cG}$ on $\sT$, denoted by $\cG$, is ample over $\overline{\sT}$. Composing with the morphism $\sS \to \overline{\sS}$ yields a generically $2\!:\!1$ rational map
\[
\sS \dashrightarrow \sT.
\]
Let $\cR$ denote the closure of the ramification locus of this map. Let $\wt{\sS}$ be the normalization of the double cover of $\sT$ branched along $\cR$. Then $\wt{\sS}$ is isomorphic to $\sS$ in codimension one, and both are ample models of the $\bQ$-Cartier divisor $(1-\epsilon)\cL + \epsilon \cF$. Hence they are in fact isomorphic. Consequently, $\cF$ is the pullback of the Cartier divisor $\cG$ on $\sS$, and is therefore Cartier.
\end{proof}

\begin{remark}
\textup{
The above proposition does not hold for $g=3$: the morphism $\mts{F}_{\Lambda^3}\to \mts{D}^{3}_{2,0}$ is even not birational. Indeed, it is generically $2\!:\!1$: a general $(S,L)\in \mts{D}^{3}_{2,0}$ is a double cover of $\mathbb{P}^1\times\mathbb{P}^1$, with $L$ the pullback of $\mathcal{O}(1,1)$. However, there are two choices for $F$, namely the pullbacks of $\mathcal{O}(1,0)$ and $\mathcal{O}(0,1)$.
}
\end{remark}

Let $$\pi:(\sS,(1-c)\cL+c\cF)\ \longrightarrow \  \mts{F}_{\Lambda}$$ be the universal family. Then $\mtc{L}$ induces a double cover \[\begin{tikzcd}[ampersand replacement=\&]
	{\sS} \&\& {\sY} \\
	\& {\mts{F}_{\Lambda}}
	\arrow[from=1-1, to=1-3]
	\arrow[from=1-1, to=2-2]
	\arrow[from=1-3, to=2-2]
\end{tikzcd},\] which fiberwise is the aforementioned ramified cover $S\rightarrow \bF_n$. Denote by $\mts{R}$ the ramification locus. As the divisor $\mtc{F}$ is relatively nef over $\mts{F}_{\Lambda}$ by Lemma \ref{lem:nefness of F}, it induces an elliptic fibration \[\begin{tikzcd}[ampersand replacement=\&]
	{\sS} \&\& {\mts{C}\coloneqq \bP_{\mts{F}_{\Lambda}}(\pi_*\mtc{F})} \\
	\& {\mts{F}_{\Lambda}}
	\arrow[from=1-1, to=1-3]
	\arrow[from=1-1, to=2-2]
	\arrow[from=1-3, to=2-2]
\end{tikzcd}.\] Denote by $\mtc{F}_{\sing}\subseteq \mts{S}$ the closure of the sum of the singular fibers. Then by Lemma \ref{lem:KSBA-stable}, the family $(\sS,\epsilon_1 \sR +\epsilon_2 \cF_{\sing})\rightarrow \mts{F}_{\Lambda}$ is KSBA-stable, and hence there exists a natural morphism $\beta:\mts{F}_{\Lambda}\rightarrow \mtc{M}^{\KSBA}_{\textup{ell},2}(\epsilon_1,\epsilon_2)$, which is dominant. Let $\mtc{M}^{\KSBA,\nu}_{\textup{ell},2}(\epsilon_1,\epsilon_2)$ be the normalization of $\mtc{M}^{\KSBA}_{\textup{ell},2}(\epsilon_1,\epsilon_2)$, and $$\beta^\nu:\mts{F}_{\Lambda}\rightarrow \mtc{M}^{\KSBA,\nu}_{\textup{ell},2}(\epsilon_1,\epsilon_2)$$ be the morphism induced by $\beta$.
\begin{prop}
    The morphism $\beta^\nu$ is an open immersion.
\end{prop}

\begin{proof}
     Since both $\mts{F}_{\Lambda}$ and $\mtc{M}^{\KSBA,\nu}_{\textup{ell},2}(\epsilon_1,\epsilon_2)$ are separated DM stacks of finite type over $\bC$, Zariski's main theorem reduces the claim to proving that $\beta^\nu$ is representable, quasi-finite, and birational.

    Let $(S,\epsilon_1R+\epsilon_2F_{\sing})$ be a general object of $\mtc{M}^{\KSBA}_{\textup{ell},2}(\epsilon_1,\epsilon_2)$ and consider any $(S,(1-c)L+cF)$ mapping to it. For such $S$, the Picard rank is $\rho(S)=2$, and there is a unique elliptic fibration $f:S\to \bP^1$ for which $F_{\sing}$ is a multiple of the fiber class. Hence the class $F$ is uniquely determined. Moreover, $f$ admits a smooth bisection, which induces a double cover $S \to \bF_n$. Since $\rho(S)=2$, every bisection of $f$ is the pullback of a section of $\bF_n$, and they all induce the same double cover $S \to \bF_n$. Therefore the polarization class $L$ is uniquely determined as well, and $\beta^\nu$ is birational.

    Next we verify representability. For any $(S,(1-c)L+cF)\in \mts{F}_{\Lambda}$, an automorphism $\sigma$ fixes $L$, and in particular it fixes the ramification locus $R$ of the double cover induced by~$L$. Similarly, since $\sigma$ fixes $F$, it also fixes $F_{\sing}$. This yields a natural homomorphism
    \[
        \Aut(S,(1-c)L+cF)\longrightarrow \Aut(S,\epsilon_1R+\epsilon_2F_{\sing}),
    \]
    which is injective because both groups are subgroups of $\Aut(S)$. Hence $\beta$ is representable, and therefore so is $\beta^\nu$.

    Finally, to check that $\beta$ is quasi-finite, let $(S,\epsilon_1R+\epsilon_2F_{\sing})$ be in the image of $\beta$. A preimage under $\beta$ consists of $S$ together with the class $F$, which is uniquely determined by $F_{\sing}$. Thus it remains to show that there are only finitely many possibilities for the choice of $L$. Let $\wt S\to S$ be the minimal resolution and let $\wt L$ be the pullback of $L$. Since $\wt L$ realizes $\wt S$ as a double cover of one of the Hirzebruch surfaces $\bF_0,\dots,\bF_4$, and since $\wt L$ is the pullback of a unique line bundle on $\bF_n$, it follows that there are only finitely many possibilities for $\wt L$, and hence for $L$. Therefore $\beta$ is quasi-finite.

    Having established representability, quasi-finiteness, and birationality, Zariski's main theorem implies that $\beta^\nu$ is an open immersion.
\end{proof}

\begin{corollary}[Compactification of the moduli of hyperelliptic K3 surfaces]\label{cor:HE K3 surfaces}
For any genus $g \geq 3$, there exists a Deligne–Mumford stack $\mtc{M}^{\KSBA}_{\textup{ell},2}(\epsilon_1,\epsilon_2)$ whose objects are described in Theorems~\ref{thm:elliptic with a bisection moduli}, \ref{thm:Singularities over the Smooth Locus I}, \ref{thm:Singularities over smooth locus II}, and~\ref{thm:Singular fibers over nodes}.  
The normalization of this stack is a compactification of the normalization of the hyperelliptic divisor $\mts{D}^{g}_{2,0}$ in the moduli stack $\mts{F}_g$ of polarized K3 surfaces of genus $g$.
\end{corollary}

\section{Moduli of Weierstrass fibrations}

In this section, we use the results in \cite{ISZ25} to classify the singular objects parametrized by the KSBA moduli stack for Weierstrass fibrations.

\subsection{Geometry of Weierstrass fibrations}

Let $(X,S)\rightarrow C$ be a general Weierstrass fibration. We begin by observing that the linear series $|2S|$ is base-point free. Therefore, the doubled section $2S$ induces a double cover $$\textstyle \pi:\  X\ \longrightarrow \ Y:=\Proj_C\oplus_{m\geq0}\Sym^mf_*\mtc{O}_X(2S)$$ over $C$, where 
\begin{enumerate}
    \item every $\pi$-fiber is a $\bP^1$, and fiberwise it is a standard hyperelliptic involution quotient from an elliptic curve to $\bP^1$ induced by a $g^1_2$;
    \item the image of $S$ is a section of $g:Y\rightarrow C$, which we denote by $\Sigma$;
    \item the ramification divisor $R\subseteq Y$ (resp. $R_X\subseteq X$) of $\pi$ is a $4$-section of $g$ (resp. $f$) which satisfies that $\pi^*R= 2R_X$;
    \item $\Sigma$ is an irreducible component of $R$;
\end{enumerate}
The surface $Y$ is a ruled surface over $C$, and hence one can identify $Y$ with $\bP\mtc{E}$ for some vector bundle $\mtc{E}$ of rank $2$ on $C$. Moreover, one can assume that $\oH^0(C,\mtc{E})\neq0$ and $\oH^0(C,\mtc{E}\otimes \mtc{L})=0$ for any $\mtc{L}\in \Pic(C)$ with $\deg \mtc{L}<0$. Let $\sigma \in \oH^0(Y,\mtc{O}_Y(1))\simeq \oH^0(C,\mtc{E})$ be a section, where $\mtc{O}_Y(1)=\mtc{O}_{\bP\mtc{E}}(1)$ under the identification $$\Pic(Y)\ = \ \Pic (\bP\mtc{E}) \ = \ \bZ[\mtc{O}_{\bP\mtc{E}}(1)]\oplus g^*\Pic(C).$$ 
If $X$ is smooth, then $R$ is a smooth curve, and hence $R=\Sigma\sqcup T$ for some smooth divisor $T\sim 3\sigma+g^*D$, where $D$ is a divisor on $C$. The letter $T$ stands for ``trisection'' since $T\rightarrow C$ is of degree $3$.

\subsection{KSBA-moduli stack of Weierstrass fibrations}

Let $0<\epsilon\ll 1$ be a sufficiently small rational number, and let 
\[
\vec{a}=(a_1,\ldots,a_n), \qquad 0 < a_i \le 1.
\]
Consider a Weierstrass fibration
\[
(X_\eta,\ \epsilon S_\eta + \vec{a}F_\eta)\ \to\ C_\eta
\]
over the generic point $\eta$ of the spectrum of a DVR $A$, whose fibers are at worst irreducible nodal curves, and such that the pair $(X_\eta,\epsilon S_\eta+\vec{a}F_\eta)$ is KSBA-stable. Here:
\begin{itemize}
    \item $S_\eta$ is a section of $X_\eta\rightarrow C_\eta$;
    \item $p_\eta = p_{\eta,1}+\cdots+p_{\eta,n}$ is a sum of marked points on $C_\eta$, and 
    \[\textstyle
        \vec{a}p_\eta := \sum_{i=1}^n a_i p_{\eta,i};
    \]
    \item $F_\eta = F_{\eta,1}+\cdots+F_{\eta,n}$ is the corresponding sum of fibers, and 
    \[\textstyle
        \vec{a}F_\eta := \sum_{i=1}^n a_i F_{\eta,i}.
    \]
\end{itemize}
Denote by $\ove{\mtc{W}}^{\KSBA}(\epsilon,\vec{a})$ the KSBA moduli stack which generically parametrizes pairs of the form $(X_\eta,\epsilon S_\eta+\vec{a}F_\eta)$.

\begin{theorem}\label{thm_W_fib_with_a_section}
Let $\mathbf{M}_\eta$ be the moduli part of the canonical bundle formula for $X_\eta \to C_\eta$. Then the following statements hold:
\begin{enumerate}
    \item[\textup{(1)}] After a finite base change of $\Spec A$, the KSBA-stable extension $(X,\epsilon S+\vec{a}F)$ of $(X_\eta,\epsilon S_\eta+\vec{a}F_\eta)$ over $\Spec A$ admits a fibration over a generalized pair
    \[\textstyle
        (C,\vec{a}p+\bfM)\to\Spec A,
    \]
    which is a family of nodal pointed curves with $K_C+\vec{a}p+\bfM$ ample, and whose generic fiber is $ (C_\eta,\vec{a}p_\eta+\mathbf{M}_\eta)$.
    \item[\textup{(2)}] The family $C\to\Spec A$ is the coarse space of a family of twisted curves $\cC \to \Spec A$ equipped with a fibration $(\cX,\cS) \to \cC$, whose geometric fibers $(\cX_p,\cS_p)$ have equation
    \[
    \bigl( y^2x = x^3 + axz^2 + bz^3,\ [0,1,0] \bigr),
    \]
    and such that the induced morphism on coarse spaces is $(\cX,\cS)\to\cC$ giving $(X,S)\to C$.

    \item[\textup{(3)}] The fibers of $(X,S)\to C$ over codimension-one points of $C$ have at worst nodal singularities.
\end{enumerate}
\end{theorem}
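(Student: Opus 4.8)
The plan is to follow the strategy used for Theorem~\ref{thm:elliptic with a bisection moduli}: quotient by the hyperelliptic involution to land inside the framework of \cite{ISZ25}, invoke the machinery of Section~\S\ref{sec:KSBA-moduli stacks for certain ruled surfaces}, and then pass back to $X$ by a double cover. First, replace $(X_\eta,\epsilon S_\eta+\vec{a}F_\eta)\to C_\eta$ by the quotient $\pi_\eta\colon X_\eta\to Y_\eta:=\Proj_{C_\eta}\bigoplus_m\Sym^m\!\big(f_*\mathcal{O}_{X_\eta}(2S_\eta)\big)$ recalled at the beginning of this section, where $Y_\eta$ is a ruled surface over $C_\eta$, the image $\Sigma_\eta$ of $S_\eta$ is a section, and the branch divisor is $R_\eta=\Sigma_\eta+T_\eta$ with $T_\eta$ a reduced trisection. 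From $K_{X_\eta}=\pi_\eta^{*}(K_{Y_\eta}+\tfrac12 R_\eta)$, $\pi_\eta^{*}\Sigma_\eta=2S_\eta$ and $\pi_\eta^{*}F_{Y_\eta}=F_\eta$ one gets $K_{X_\eta}+\epsilon S_\eta+\vec{a}F_\eta=\pi_\eta^{*}\big(K_{Y_\eta}+\tfrac12 R_\eta+\tfrac{\epsilon}{2}\Sigma_\eta+\vec{a}F_{Y_\eta}\big)$, so by Corollary~\ref{cor_to_take_ksba_limit_one_can_take_quotient_first} the pair $\big(Y_\eta,\tfrac12 R_\eta+\tfrac{\epsilon}{2}\Sigma_\eta+\vec{a}F_{Y_\eta}\big)\to C_\eta$ is a KSBA-stable fibered log Calabi--Yau pair of the type treated in \cite{ISZ25}, with the marking $\tfrac{\epsilon}{2}\Sigma_\eta$ in the role of the ample perturbation $\epsilon_2F$ there.

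Next I would run the machinery summarized in Section~\S\ref{sec:KSBA-moduli stacks for certain ruled surfaces} for this pair: build a twisted extension via stable quasimaps, run the relative log MMP with scaling on the associated base generalized pair, and lift each step to the total space by \cite[Theorem~1.2]{ISZ25}. After a finite base change of $\Spec A$ this yields the unique KSBA-stable extension $\big(Y,\tfrac12 R+\tfrac{\epsilon}{2}\Sigma+\vec{a}F_Y\big)\to\Spec A$ with a fibration $g\colon Y\to C$ onto a family of nodal curves such that $(C,\vec{a}p+\mathbf{M})$ is the relative log canonical model of the base generalized pair; in particular $K_C+\vec{a}p+\mathbf{M}$ is ample and the canonical bundle formula gives $g^{*}(K_C+\vec{a}p+\mathbf{M})\sim_{\mathbb{Q}}K_Y+\tfrac12 R+\vec{a}F_Y$. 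Let $\pi\colon X\to Y$ be the double cover branched along $R$ and set $f:=g\circ\pi$. Because the hyperelliptic involution of $X_\eta$ extends to the KSBA-stable limit of $X_\eta$ by Lemma~\ref{lem:automorphism extends} and the quotient of that limit is a KSBA-stable extension of $Y_\eta$ — hence equals $Y$ by separatedness of $\overline{\mathcal{M}}^{\KSBA}$ — the pair $(X,\epsilon S+\vec{a}F)$ is the KSBA-stable extension of $(X_\eta,\epsilon S_\eta+\vec{a}F_\eta)$; pulling the relation above back along $\pi$ and using $\pi^{*}(K_Y+\tfrac12 R)=K_X$ gives $f^{*}(K_C+\vec{a}p+\mathbf{M})\sim_{\mathbb{Q}}K_X+\vec{a}F$, which is~(1). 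For~(3): since $R=\Sigma+T$ is reduced and cuts four distinct points on a general fibre, $X|_{G^\nu}$ is a normal elliptic surface for every component $G$ of $C$ and $Y$ is of type~I or~II, so over a codimension-one point of $C$ the fibre of $Y\to C$ is a $\bP^1$ by Corollary~\ref{cor:projective bundle over smooth locus}, meeting $R$ in a degree-$4$ divisor one of whose points — the one cut out by $\Sigma$ — is reduced; slc-ness of $(Y,\tfrac12 R+\cdots)$ bounds the remaining multiplicities by $2$, so the double cover of such a fibre is either smooth or has a single node.

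For~(2) I would use Theorem~\ref{thm: main theorem 2 of ISZ25}: over the smooth locus of $C$ the surface $Y$ is the coarse space of a $\bP^1$-bundle on which $\Sigma$ and $T$ lift to a section and a trisection, so there $(X,S)\to C$ is already a schematic Weierstrass fibration with at worst nodal fibres. Over a node of $C$, the local description underlying Theorem~\ref{thm:classification of fibers over nodes} — in which the sheaf $\mathcal{L}$ with $R\in|\mathcal{L}^{\otimes2}|$ produced in Lemma~\ref{lem: pure branch locus 1} is either Cartier or of Cartier index~$2$ — exhibits $Y\to C$ \'etale-locally as a cyclic quotient of a projective bundle; this lets one attach to $C$ a family of twisted nodal curves $\mathcal{C}\to\Spec A$ by root-stack and gerbe constructions over the nodes, over which the pulled-back family $(\mathcal{X},\mathcal{S})\to\mathcal{C}$ is a genus-one fibration with a section. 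Embedding it relatively by $|3\mathcal{S}|$ then puts its geometric fibres in the Weierstrass form $(\,y^2x=x^3+axz^2+bz^3,\ [0,1,0]\,)$, and by construction the relative coarse space of $\mathcal{X}\to\mathcal{C}$ recovers $(X,S)\to C$.

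The hard part is this last step: one must produce $\mathcal{C}$ as a genuine twisted nodal curve, check that the Weierstrass data descends, and — most delicately — verify that passing to coarse spaces commutes with the double cover, i.e.\ that the coarse space of the stacky family $(\mathcal{X},\mathcal{S})\to\mathcal{C}$ is exactly the KSBA-stable limit $(X,S)\to C$ and not a further contraction of it. This is where the two-step coarse-space procedure of Remark~\ref{rmk_we_can_ignore_twisted_node} has to be combined with the explicit local models underlying Theorem~\ref{thm:classification of fibers over nodes} in both the Cartier and the index-$2$ case for $\mathcal{L}$. A minor further point is that the boundary weights are asymmetric ($\tfrac12$ on $T$ versus $\tfrac12+\tfrac{\epsilon}{2}$ on $\Sigma$), so one should check this asymmetry is carried harmlessly through the arguments of \cite{ISZ25}; as in Section~\S3 it only affects bookkeeping.
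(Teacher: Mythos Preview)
Your overall strategy --- quotient by the hyperelliptic involution, invoke \cite{ISZ25}, and take a double cover back --- is the same as the paper's, but the way you feed the quotient into \cite{ISZ25} has a genuine gap. The framework of \cite{ISZ25} (Theorems~\ref{thm: main theorem of ISZ25} and~\ref{thm: main theorem 2 of ISZ25}) applies to pairs of the form $\bigl(Y,(\tfrac12+\epsilon_1)R+\epsilon_2 F\bigr)$ where the whole ramification divisor $R$ carries the same coefficient $\tfrac12+\epsilon_1$ and the extra perturbation $\epsilon_2 F$ is supported on \emph{fibers}. Your pair $\bigl(Y_\eta,\tfrac12 R_\eta+\tfrac{\epsilon}{2}\Sigma_\eta+\vec{a}F_{Y_\eta}\bigr)$ has asymmetric coefficients on $\Sigma_\eta$ and $T_\eta$, and the extra $\tfrac{\epsilon}{2}\Sigma_\eta$ is a \emph{section}, not a fiber; it cannot play ``the role of the ample perturbation $\epsilon_2 F$'' in \cite{ISZ25}. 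The paper avoids this by a two-step procedure: first take the KSBA limit of $\bigl(Y_\eta,(\tfrac12+\epsilon_1)(\Sigma_\eta+T_\eta)+\vec{a}F_\eta+\epsilon_2 F'_\eta\bigr)$, which does fit \cite{ISZ25}, and only \emph{afterwards} lower the coefficient of $T$ to $\tfrac12$ and send $\epsilon_2\to 0$ by running a further MMP.

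That second step is where the real content lies, and your sketch skips it. In particular, your assertion that ``$Y$ is of type~I or~II'' is not justified: type~III components \emph{do} occur in the ISZ25 limit, and the paper spends Step~2 (Lemmas~\ref{lem:Q-Cartierness of T and Sigma}, \ref{lem:T and Sigma are disjoint}, Corollary~\ref{cor_W_remains_lc_when_sigma_has_coef_1}) proving that $\Sigma$ is $\bQ$-Cartier, that $\Sigma\cap T=\emptyset$, and that $(Y,\Sigma+\epsilon_2F')$ is lc, in order to show that type~III components split into two pieces with $\Sigma$ lying on exactly one of them. The log canonical model $Y\to Y^{(1)}$ over $C$ then contracts the piece disjoint from $\Sigma$, and a further analysis of rational bridges and tails with $\deg\mathbf{M}\le 1$ is needed to reach $Y^{\st}\to C^{\st}$. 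Only at that point does one get a genuine $\bP^1$-fibration in codimension one (Corollary~\ref{cor_after_the_contractions_we_still_have_P1_bundle_over_orb_curve}), from which the twisted curve $\cC^{\st}$ and the Weierstrass form follow via \cite[Lemma~2.1]{twisted_map_2}. Your arguments for (2) and (3) presuppose the outcome of this MMP without carrying it out.
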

\noindent The following diagram summarizes the situation:
    \[
    \begin{tikzcd}[ampersand replacement=\&]
        {(X_\eta,\epsilon S_\eta + \vec{a}F_\eta)} \& {(X,\epsilon S+\vec{a}F)} \&\& {(\cX,\epsilon\cS+\vec{a}\cF)} \\
        {(C_\eta,\vec{a}p_\eta+\bfM)} \& {(C,\vec{a}p+\bfM)} \&\& {\cC} \\
        \eta \& {\Spec A}
        \arrow[from=1-1, to=1-2]
        \arrow[from=1-1, to=2-1]
        \arrow[from=1-2, to=2-2]
        \arrow["{\textup{cms}}"', from=1-4, to=1-2]
        \arrow["{\textup{Weierstrass}}"{description}, from=1-4, to=2-4]
        \arrow[from=2-1, to=2-2]
        \arrow[from=2-1, to=3-1]
        \arrow[from=2-2, to=3-2]
        \arrow["{\textup{cms}}"{description}, from=2-4, to=2-2]
        \arrow[from=2-4, to=3-2]
        \arrow[hook, from=3-1, to=3-2]
    \end{tikzcd}
    \] Theorem \ref{thm_W_fib_with_a_section} describes the KSBA-stable limits of Weierstrass fibrations.
\subsection{Proof of Theorem \ref{thm_W_fib_with_a_section}}

Take the quotient of $X_{\eta}$ by the involution induced by the linear system $|2S_{\eta}|$, and denote the result by $(Y_{\eta}, \tfrac{1}{2}R_{\eta}).$ Then for any $0<\epsilon \ll 1$, the pair $(X_{\eta},\, 2\epsilon S_{\eta} + \vec{a}\,F_{\eta})$ is KSBA–stable if and only if the pair
$$\textstyle \big(Y_{\eta},\, \tfrac{1}{2}R_{\eta} +\epsilon\Sigma_{\eta}+\vec{a}\,F_{\eta}\big)\ = \ \big(Y_{\eta},\, \tfrac{1}{2}T_{\eta} +(\frac{1}{2}+\epsilon)\Sigma_{\eta}+\vec{a}\,F_{\eta}\big)$$ is KSBA–stable, where we abuse notation by denoting the marked fibers of $X_{\eta} \to C_{\eta}$ and those of $Y_{\eta} \to C_{\eta}$ uniformly by $F_{\eta}$. Moreover, we denote by $R_\eta$ the ramification locus, which can be written as $R_\eta = \Sigma_\eta +T_\eta$, where $\Sigma_\eta$ is a section and $T_\eta$ a 3-section.

To find the KSBA–stable limit over $\Spec A$, we apply Corollary~\ref{cor_to_take_ksba_limit_one_can_take_quotient_first}. 
Namely, we obtain the KSBA–stable extension of $(Y_\eta,\, (\tfrac{1}{2}+\epsilon)\Sigma_\eta + \tfrac{1}{2}T_\eta + \vec{a}F_\eta)$, denoted by $(Y,\, (\tfrac{1}{2}+\epsilon)\Sigma + \tfrac{1}{2}T + \vec{a}F)$, by first taking the KSBA–stable limit of 
\[\textstyle 
\big(Y_\eta,\, (\tfrac{1}{2}+\epsilon_1)(\Sigma_\eta + T_\eta) + \vec{a}F_\eta + \epsilon_2 F'_\eta\big)
\]
where $F'_{\eta}$ is the sum of all fibers for which the support of the restriction of $R$ to them has fewer than four points; and then decreasing the coefficient of $T$ to $\frac{1}{2}$ and decreasing $\epsilon_2$ to $0$. The procedure has two steps.
\begin{enumerate}
    \item The first step is preparatory: we study the pair $(Y,\Sigma)$ in more detail; see \S \ref{sec:step 1}.
    \item In the second step, we construct the desired stable limits by first taking the canonical model of 
\[
(Y,\, (\tfrac{1}{2}+\epsilon_1)\Sigma + \tfrac{1}{2}T + \vec{a}F + \epsilon_2 F'),
\]
and then letting $\epsilon_2$ decrease to $0$; see \S \ref{sec:step 2}.
\end{enumerate}

 \subsubsection{Step 1: the family $(Y,\Sigma)\to \spec A$}\label{sec:step 1}

By Theorem \ref{thm: main theorem of ISZ25}, the pair $(Y_{\eta}, (\tfrac{1}{2}+\epsilon_1)D_{\eta} + \vec{a}F_{\eta} + \epsilon_2 F'_{\eta})$ extends to a KSBA–stable family \[\begin{tikzcd}[ampersand replacement=\&]
	{\big(Y,(\frac{1}{2}+\epsilon_1)D+\vec{a}F+\epsilon_2 F'\big)} \&\& C \\
	\& {\Spec A}
	\arrow[from=1-1, to=1-3]
	\arrow[from=1-1, to=2-2]
	\arrow[from=1-3, to=2-2]
\end{tikzcd}.\] Here, $0 < \epsilon_1 \ll \epsilon_2 \ll 1$ are two rational numbers. Let $\Sigma$ be the closure of $\Sigma_\eta$ and $T$ the closure of $D_\eta \setminus \Sigma_\eta$. We have the following key observation.

\begin{lemma}\label{lem:Q-Cartierness of T and Sigma}
    The divisor $\Sigma$ is $\bQ$-Cartier.
\end{lemma}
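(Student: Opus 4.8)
The plan is to prove the stronger statement that $\Sigma$ is in fact Cartier on $Y$. Since $Y_\eta$ is smooth, $\Sigma$ is automatically Cartier away from the central fiber, so the whole question is local along $\Sigma\cap Y_0$. The strategy is to show that $\Sigma$ is a section of $g\colon Y\to C$ lying entirely in the locus where $g$ is smooth of relative dimension one; once this is known, $\Sigma$ is Cartier by a purely local argument.

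First I would check that $\Sigma$ is genuinely a section. Being the closure of the irreducible divisor $\Sigma_\eta$, it is irreducible, hence either horizontal over $C$ or a union of fiber components; the latter is impossible since $\Sigma$ is a component of the relatively ample divisor $R=\Sigma+T$ of Theorem~\ref{thm: main theorem of ISZ25}, which has no vertical component. So $\Sigma$ is horizontal, hence finite over $C$, of degree $1$ since $R\to C$ has degree $4$ and $T\to C$ has degree $3$; and as $C$ is normal -- its total space has at worst $A_n$-singularities, along the nodes of the fibers -- such a $\Sigma$ is the graph of a section $C\to Y$.

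Next, and this is the geometric heart of the argument, I would show that $g$ is smooth of relative dimension one in a neighbourhood of $\Sigma$, i.e.\ that $\Sigma$ meets every fiber of $g$ at a smooth point of that fiber and avoids the non-normal locus of $Y$. The mechanism is the one already used in Lemma~\ref{lem: pure branch locus 1}, Corollary~\ref{cor:pure branch locus 2} and Lemma~\ref{lem:at worst double point 2}(1): $\Sigma$ is a component of the ramification divisor of the double cover $\pi\colon X\to Y$, so if $\Sigma$ passed through a node of some fiber $Y_c$, or through the double locus of a type~III component of $Y_0$, then the double cover branched along $R\supseteq\Sigma$ would have a fiber of $X\to C$ worse than nodal (recall that the double cover of a dumbbell curve branched at its node is non-reduced there, cf.\ Lemmas~\ref{lem: special double cover of dumbbell 1}--\ref{lem: special double cover of dumbbell 2}), contradicting the KSBA-stability of $X$ and Theorem~\ref{thm_W_fib_with_a_section}(3). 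Here one uses Corollary~\ref{cor:fiber over the node} -- together with Corollary~\ref{cor:projective bundle over smooth locus} over the generic part of $C^{\sm}$ -- to know all the fibers of $g$ over codimension-one points of $C$.

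With $g$ smooth of relative dimension one along $\Sigma$, the conclusion is local: such a morphism is, \'etale-locally near the section, isomorphic to $\mathbb{A}^1_C\to C$ with the section corresponding to the zero section $\{w=0\}$, which is cut out by one equation and hence Cartier -- and this persists over the nodes of $C$, where $C$ and $Y$ are singular but $\{w=0\}\subset\mathbb{A}^1\times C$ remains Cartier. Since $\cO_Y(\Sigma)$ is canonically trivial off $\Sigma$, it follows that $\cO_Y(\Sigma)$ is a line bundle on all of $Y$. The hard part is the smoothness-along-$\Sigma$ claim over the bad fibers -- the nodes of $C$ and the finitely many special fibers over $C^{\sm}$ -- where one must genuinely combine the slc/KSBA constraints on $X$ with the explicit double-cover computations of \S\ref{subsec:Classification of fibers on double covers: preparation}; over the generic locus it is immediate. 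Should that step prove delicate, an alternative route is via the double cover: $\pi^*\Sigma=2S$, and by Theorem~\ref{thm_W_fib_with_a_section}(2) the divisor $S$ is the section at infinity of the twisted Weierstrass model $(\cX,\cS)\to\cC$, hence visibly $\bQ$-Cartier on $X$; one then descends $\bQ$-Cartierness along the finite morphism $\pi$, using that $\pi$ is \'etale in codimension one outside $R$.
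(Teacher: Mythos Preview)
Your argument is circular. Lemma~\ref{lem:Q-Cartierness of T and Sigma} is Step~1 in the \emph{proof} of Theorem~\ref{thm_W_fib_with_a_section} (see \S\ref{sec:step 1}), so you cannot invoke parts~(2) or~(3) of that theorem here. Both your main line (``contradicting \ldots\ Theorem~\ref{thm_W_fib_with_a_section}(3)'') and your alternative route via $\pi^*\Sigma=2S$ and the twisted Weierstrass model rely on conclusions that are only established \emph{after} this lemma is in place. Relatedly, at this stage in the paper there is no double cover $X\to Y$ of \emph{this} threefold $Y$: the $Y$ in question is the KSBA limit with the inflated coefficients $(\tfrac12+\epsilon_1)(\Sigma+T)+\epsilon_2F'$ coming from Theorem~\ref{thm: main theorem of ISZ25}, not yet the quotient of the KSBA limit of $X_\eta$.

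Even setting circularity aside, your key geometric claim --- that $g$ is smooth of relative dimension one along $\Sigma$ --- is not established at the finitely many bad points. The argument of Lemma~\ref{lem: pure branch locus 1} only controls the ramification locus over a \emph{big} open of $C$, and the statement that $\Sigma$ avoids the horizontal double locus in type~III is exactly what the paper proves later in Corollary~\ref{cor_W_remains_lc_when_sigma_has_coef_1} via Koll\'ar's gluing theorem, \emph{using} that $\Sigma$ is $\bQ$-Cartier. So you would be assuming what you need to prove.

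The paper takes a different and more robust route: rather than locating $\Sigma$ inside the smooth locus of $Y$, it shows that $Y$ is $\bQ$-factorial in a neighbourhood of all of $D=\Sigma+T$. Away from finitely many smooth points of $C_0$ this comes from the stacky model of Corollary~\ref{cor:fiber over the node}: $\cC$ is $\bQ$-factorial and $\cD$ meets each fiber in the smooth locus, so $\cY$ is $\bQ$-factorial near $\cD$. At the remaining points one uses Theorem~\ref{thm: main theorem 2 of ISZ25} directly: in cases~(1)--(2) the surface component is smooth there, and in case~(3) the threefold has an explicit $A_{n-1}$-type singularity $xy=s^n$, which is $\bQ$-factorial. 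This avoids entirely the question of where $\Sigma$ sits relative to $\Sing(Y)$, which is why it can be proved first and then used as input for Lemma~\ref{lem:T and Sigma are disjoint} and Corollary~\ref{cor_W_remains_lc_when_sigma_has_coef_1}.
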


\begin{proof}
By Corollary \ref{cor:fiber over the node}, away from finitely many smooth points $x_1,\ldots,x_r \in C,$ the map 
$\pi \colon (Y,\tfrac{1}{2}D) \to C$ is the induced morphism between the coarse moduli spaces of $$(\cY,\tfrac{1}{2}\cD)\ \longrightarrow \ \cC,$$
where $\cC$ is a family of twisted curves over $\Spec A$, and the universal family comes from a morphism $\cC\to \sP_1$. The fibers of $\pi$ are either $\bP^1$ or the nodal union $\Gamma$ of two copies of $\bP^1$. Moreover, $\cD$ intersects $\Gamma$ at the smooth locus of $\Gamma$. In particular, since $\cC$ is $\bQ$-factorial, the space $\cY$ is also $\bQ$-factorial in a neighborhood of $\cD$. Thus $Y$ is $\bQ$-factorial near $D$, and away from the points $x_1,\ldots,x_r$.

It remains to show that $Y$ is $\bQ$-factorial over a neighborhood of each $x_i$. Let $G_i$ denote the irreducible component of the special fiber $C_0$ that contains $x_i$. In cases~(1) and~(2) of Theorem \ref{thm: main theorem 2 of ISZ25}, the surface $G_i \times_C Y$ is smooth over $x_i$. In case~(3), the surface $G_i \times_C Y$ has a unique singularity of the form
\[
    \Spec\!\bigl(k[\![x,y,s,t]\!]/(xy - s^n)\bigr)
\]
lying over $x_i$, for some integer $n$. Here $t$ is a uniformizer of the DVR $A$, while $s$ is the pullback of a local equation of $G_i \subset C$ at $x_i$. But this singularity is $\bQ$-factorial; hence $Y$ is $\bQ$-factorial in a neighborhood of $x_i$.
\end{proof}

\begin{lemma}\label{lem:T and Sigma are disjoint}
     The divisors $\Sigma$ and $T$ are disjoint.
\end{lemma}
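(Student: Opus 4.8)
The plan is to use the fact that the boundary of an slc pair has all coefficients $\le 1$: if $\Sigma$ met $T$, their common locus would force a divisorial component of coefficient $>1$ in the special fiber of the family, contradicting KSBA-stability.

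First I would observe that $\Sigma$ and $T$ are disjoint over the generic point: since $X_\eta$ is a Weierstrass fibration with at worst nodal fibers it is smooth, so its branch divisor $R_\eta = \Sigma_\eta + T_\eta$ is smooth, and in particular $\Sigma_\eta\cap T_\eta=\varnothing$. As $\Sigma$ and $T$ are the closures of $\Sigma_\eta$ and $T_\eta$, the closed subset $\Sigma\cap T$ has empty generic fiber over $\Spec A$, hence is contained in the special fiber $Y_0$.

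Next I would pin down the dimension of $\Sigma\cap T$. By Lemma~\ref{lem:Q-Cartierness of T and Sigma} the divisor $\Sigma$ is $\bQ$-Cartier, and it is distinct from every component of $T$ (the components of $R=\Sigma+T$ being distinct), so the restriction $\Sigma|_T$ is a nonzero $\bQ$-Cartier $\bQ$-divisor on $T$; its support $\Sigma\cap T$ therefore has pure codimension one in $T$, i.e.\ is empty or purely one-dimensional. Suppose it is nonempty and let $W$ be an irreducible component; then $W\subseteq Y_0$ is a curve. Since $\Sigma$ and $T$ are flat over the DVR $\Spec A$, their special fibers $\Sigma_0:=\Sigma|_{Y_0}$ and $T_0:=T|_{Y_0}$ are purely one-dimensional, so $W$, being a curve contained in both, is an irreducible component of $\Sigma_0$ and of $T_0$, occurring in each with coefficient at least $1$. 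Hence $W$ appears in $(\tfrac12+\epsilon_1)(\Sigma_0+T_0)$, and therefore in the boundary $\Delta_0$ of the special fiber $(Y_0,\Delta_0)$, with coefficient at least $2(\tfrac12+\epsilon_1)=1+2\epsilon_1>1$. This contradicts the slc-ness of $(Y_0,\Delta_0)$, which is a fiber of the KSBA-stable family produced by Theorem~\ref{thm: main theorem of ISZ25}. Hence $\Sigma\cap T=\varnothing$.

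The delicate point is the dimension count, i.e.\ excluding that $\Sigma$ and $T$ meet in isolated points only; this is exactly what the $\bQ$-Cartierness of $\Sigma$ from Lemma~\ref{lem:Q-Cartierness of T and Sigma} is for, and is the reason that lemma comes first. If one prefers, the endgame can be run differently: blowing up a curve $W\subseteq\Sigma\cap T\subseteq Y_0$, along which $D=\Sigma+T$ has multiplicity $\ge 2$ and $Y_0$ has multiplicity $\ge 1$, produces discrepancy at most $1-2(\tfrac12+\epsilon_1)-1=-1-2\epsilon_1<-1$ for the pair $(Y,(\tfrac12+\epsilon_1)D+Y_0)$, which is log canonical near $Y_0$ by inversion of adjunction; alternatively, Corollary~\ref{cor:fiber over the node} shows that $R$ cuts four distinct points on a general fiber of $Y\to C$, so that the section $\Sigma$ accounts for one of them and the trisection $T$ for the other three, forcing $\Sigma$ and $T$ disjoint over such fibers, which together with the dimension count again yields $\Sigma\cap T=\varnothing$.
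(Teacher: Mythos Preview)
Your proof is correct and follows essentially the same approach as the paper: both use the $\bQ$-Cartierness of $\Sigma$ to show that $\Sigma\cap T$, if nonempty, must be a curve in $Y_0$, and then derive a contradiction with log canonicity. The only cosmetic differences are that the paper reaches the ``curve or empty'' dichotomy via constancy of the intersection number $(\Sigma_\eta\cdot T_\eta)=(\Sigma_0\cdot T_0)=0$ rather than your codimension argument, and phrases the contradiction using inversion of adjunction on the threefold pair $(Y,(\tfrac12+\epsilon_1)(T+\Sigma)+\vec{a}F+\epsilon_2 F'+Y_0)$ rather than by reading off the coefficient of $W$ in the boundary of the special fiber.
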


\begin{proof}
 For the generic Weierstrass fibration $(X_{\eta},S_{\eta})\rightarrow C_{\eta}$, every fiber is either a smooth elliptic curve or an integral nodal curve, and $S$ is away from singularities of fibers. Thus the double cover $Y_{\eta}\rightarrow X_{\eta}$ is branched along $\Sigma_{\eta}$ of ramification index $1$, and $\Sigma_{\eta}$ is disjoint from $T_{\eta}$ on $X_{\eta}$. Since both $\Sigma$ and $T$ are $\bQ$-Cartier by Lemma \ref{lem:Q-Cartierness of T and Sigma} and the fact that $\epsilon_1$ is small enough, then \[(\Sigma_\eta.T_\eta) \ = \  (\Sigma_0.T_0) \ = \ 0,\]
which implies that either $T$ and $\Sigma$ do not intersect, or they intersect along a curve $\Gamma$. By inversion of adjunction, the pair $\big(Y,(\frac{1}{2}+\epsilon_1)(T+\Sigma)+\vec{a}F+\epsilon_2 F'+Y_0\big)$ is log canonical, and thus the latter case cannot occur: otherwise, it would not be log canonical along $\Gamma$. Therefore, $\Sigma$ and $T$ are disjoint.
\end{proof}

\begin{remark}\textup{
In what follows, the fibers $\vec{a}F$ do not play any role. To simplify notation, we will therefore omit them. More precisely, we will focus on the case $\vec{a}F = 0$; the case where $\vec{a}F \neq 0$ is entirely analogous.}
\end{remark}

\subsubsection{Step 2: KSBA-stable extension $(Y,(\frac{1}{2} + \epsilon)\Sigma +\frac{1}{2} T)\rightarrow \Spec A$.}\label{sec:step 2}   We now decrease the coefficients in 
\[
\bigl(Y,\, (\tfrac{1}{2}+\epsilon_1)R + \epsilon_2 F\bigr)
\ =\  \bigl(Y,\, (\tfrac{1}{2}+\epsilon_1)\Sigma + (\tfrac{1}{2}+\epsilon_1)T + \epsilon_2 F\bigr)
\]
to obtain 
\[
\bigl(Y,\, (\tfrac{1}{2}+\epsilon_1)\Sigma + \tfrac{1}{2}T\bigr),
\]
and take its log canonical model over $\Spec A$. Throughout this process, we keep track of the birational transformations in order to describe the geometry of the log canonical model of 
\(\bigl(Y,\, (\tfrac{1}{2}+\epsilon_1)\Sigma + \tfrac{1}{2}T\bigr)\).

We proceed in two steps: first, we decrease the coefficient of $T$ to $\tfrac{1}{2}$ and take the canonical model over $C$; then, we decrease the coefficient $\epsilon_2$ of $F$ to $0$. We begin with the following, which is a consequence of Lemma~\ref{lem:Q-Cartierness of T and Sigma}.

\begin{corollary}\label{cor_W_remains_lc_when_sigma_has_coef_1}
    The threefold pair $(Y,\Sigma+ \epsilon_2 F')$ is log canonical.
\end{corollary}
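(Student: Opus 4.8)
The plan is to verify log canonicity locally on $Y$, and the crucial point is that by Lemma~\ref{lem:T and Sigma are disjoint} the divisors $\Sigma$ and $T$ are disjoint, so the verification splits into a neighborhood of $\Sigma$, where $T$ plays no role, and its complement. First one records the $\bQ$-Cartier bookkeeping: by Lemma~\ref{lem:Q-Cartierness of T and Sigma} (whose proof in fact yields $\bQ$-factoriality of $Y$ near $D=\Sigma+T$) the divisors $\Sigma$ and $T$ are $\bQ$-Cartier; $F'$ is $\bQ$-Cartier, being the pullback to $Y$ of a divisor of the $\bQ$-factorial nodal base $C\to\Spec A$; and $K_Y+\tfrac12 R$ is $\bQ$-Cartier by Step~1. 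Hence $K_Y+\Sigma+\epsilon_2 F'$ is $\bQ$-Cartier and the statement is meaningful.

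On $Y\setminus\Sigma$ the pair $(Y,\Sigma+\epsilon_2 F')$ restricts to $(Y,\epsilon_2 F')$, which is obtained from the KSBA-stable — in particular semi-log canonical — pair $(Y,(\tfrac12+\epsilon_1)(\Sigma+T)+\epsilon_2 F')$ of Step~1 by only decreasing coefficients of effective $\bQ$-Cartier divisors. Since this operation preserves (semi-)log canonicity, nothing more is needed on this locus.

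On a neighborhood of $\Sigma$ the only issue is raising the coefficient of $\Sigma$ from $\tfrac12+\epsilon_1$ to $1$. Here I would use the structural input of Step~1 and Corollary~\ref{cor:fiber over the node}: away from finitely many points of $C$, the family $(Y,\tfrac12 D)\to C$ is the coarse space of a family $(\cY,\tfrac12\cD)\to\cC$ over a family of twisted curves whose fibers are $\bP^1$ or a dumbbell curve, in which $\cD$ — and in particular the section that is the closure of $\Sigma_\eta$ — meets every fiber in its smooth locus; and at each of the remaining finitely many points, by the proof of Lemma~\ref{lem:Q-Cartierness of T and Sigma}, $Y$ has at worst a singularity of the explicit form $\Spec k[\![x,y,s,t]\!]/(xy-s^n)$, which is a cyclic quotient singularity. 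Thus, in every case, $Y$ has at worst quotient singularities in a neighborhood of $\Sigma$, the divisor $\Sigma$ avoids the node of each fiber (so $Y$ is in fact normal near $\Sigma$), and $\Sigma$, being $\bQ$-Cartier and a section transverse to the fibers, is the image of a smooth divisor under the relevant finite quotient map. Consequently $(Y,\Sigma)$ is, locally near $\Sigma$, the quotient of a log smooth pair, hence purely log terminal — in particular log canonical — there; and since $\epsilon_2$ is small and $\Sigma\not\subseteq\Supp F'$, adding $\epsilon_2 F'$ preserves log canonicity near $\Sigma$. Together with the previous paragraph, this proves the corollary.

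I expect the only genuinely delicate step to be the local analysis at the finitely many exceptional points: confirming that, inside the explicit singularity $xy=s^n$, the section $\Sigma$ is the image of a smooth torus-invariant Weil divisor rather than a more singular one, which is what lets one read off pure log terminality. This is nonetheless forced by $\Sigma$ being $\bQ$-Cartier and a section meeting the fibers in their smooth locus. One could instead pull back along the double cover $X\to Y$ and reduce to log canonicity of $(X,S+\epsilon_2 F'_X)$ along the Weierstrass section $S=[0:1:0]$, but this route invokes the description of $(\cX,\cS)\to\cC$ from Theorem~\ref{thm_W_fib_with_a_section}(2), proved only afterwards; the argument above sidesteps that circularity.
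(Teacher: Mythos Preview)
Your decomposition into $Y\setminus\Sigma$ and a neighborhood of $\Sigma$ is sound, and the argument on $Y\setminus\Sigma$ via decreasing coefficients is correct and cleaner than what the paper does there. The gap is in the analysis near $\Sigma$ for type~III components of $Y_0$, i.e.\ those with a horizontal double curve $\Gamma$. You assert that ``$\Sigma$ avoids the node of each fiber'', but this is only established away from the finitely many exceptional points $x_i$ by the $\sP_1$-structure; nothing you have said rules out $\Sigma$ meeting $\Gamma$ over some $x_i$. Your proposed fix in the last paragraph---that $\bQ$-Cartierness plus being a section forces $\Sigma$ to be the image of a smooth torus-invariant divisor in the local model $xy=s^n$---does not work: in that model a section $x=us^a$, $y=u^{-1}s^{n-a}$ with $0<a<n$ passes through the singular point and is still $\bQ$-Cartier (the singularity is $\bQ$-factorial), so this gives no obstruction. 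Worse, for such sections one can check that $(Y,\Sigma)$ is lc but not plt (there is a zero-dimensional lc center at the singular point), so your final step ``adding $\epsilon_2 F'$ preserves log canonicity since $\epsilon_2$ is small'' would also fail if $F'$ passed through that point.

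The paper closes this gap by a different mechanism: Koll\'ar's gluing theorem. Since $\Sigma$ is $\bQ$-Cartier, the pair $(Y_G,\,t\,\Sigma|_{Y_G})$ is slc for all $t\in[0,\tfrac12]$. If $\Sigma$ met $\Gamma$, then on the normalization $Y_G^\nu=(\Gamma_1,\Delta_1)\sqcup(\Gamma_2,\Delta_2)$ the proper transform of $\Sigma$ lies on only one branch (as $\Sigma$ is a section meeting each fiber in a single reduced point), so the different on $\Delta_1$ varies with $t$ at the intersection point while the different on $\Delta_2$ does not---contradicting the matching of differents required for slc. This forces $\Sigma\cap\Gamma=\emptyset$, after which $Y$ is genuinely smooth near $\Sigma$ over $C_0^{\sm}$ and your plt argument goes through. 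You should insert this gluing step (or an equivalent argument) to justify the key disjointness claim.
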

\begin{proof}
We first show that the pair $(Y,\, \Sigma + \epsilon_2 F')$ is slc over the smooth points of $C_0$, the central fiber of $C \to \Spec A$. Since the fibers $F$ intersect $\Sigma$ transversely, it suffices to prove that $(Y,\Sigma)$ is log canonical.

Let $G$ be an irreducible component of $C_0$, and let $x \in G$ be a point which is not a node of $C_0$. Denote by $F_x$ the fiber over $x$. Then 
\[
(F_x \cdot \Sigma) = 1,
\]
since $F_x$ is numerically equivalent to a fiber of the generic fiber. In particular, $\Sigma \to C$ remains a section and is therefore smooth over such $x$. As $\Sigma$ does not intersect $T$ by Lemma~\ref{lem:T and Sigma are disjoint}, and $Y_G:=Y|_G \to G$ is smooth over $x$ in cases~(1) and~(2) of Theorem~\ref{thm: main theorem 2 of ISZ25}, the desired local log canonicity follows. Hence, to show that $(Y,\, \Sigma + \vec{a}F + \epsilon_2 F')$ is slc over the smooth points of $C_0$, it remains only to analyze $\Sigma$ in case~(3) of Theorem~\ref{thm: main theorem 2 of ISZ25}.

For this remaining case, observe that $\Sigma$ does not meet the horizontal double locus of $Y_G$. Indeed,
\begin{enumerate}
    \item $\Sigma$ is $\mathbb{Q}$-Cartier, so the pair $(Y, t\Sigma)$ is locally stable for every $0 < t \leq \tfrac{1}{2}$. In particular, the reduced pair
    \[
    \bigl(Y_G,\, t\Sigma|_{Y_G} \bigr)
    \]
    is slc for all $t \in [0, \tfrac{1}{2}]$;
    \item as $\Sigma \to C$ is a section, the intersection $\Sigma \cap F_x$ consists of a single reduced point.
\end{enumerate} If $\Sigma$ met the double locus of $Y_G$, then \'{e}tale locally near such a point, $Y_G$ would have two irreducible components, with $\Sigma$ lying on exactly one of them. Consider $Y_G^n = (\Gamma_1,\Delta_1)\sqcup (\Gamma_2,\Delta_2)$ where $\Gamma_i$ are the irreducible components of the normalization of $Y_G$, and $\Delta_i$ the preimages of the double locus. The proper transform $\Sigma_1$ of $\Sigma$ is contained in $\Gamma_1$ but not $\Gamma_2$, and if $\Sigma$ intersected the double locus, then there would be a point $p\in \Delta_1$ such that the different $\operatorname{Diff}_{\Delta_1}((\Gamma_1,\Delta_1+t\Sigma_1)$ depends on $t$. On the other hand, as $\Sigma_1$ is not contained in $\Gamma_2$, there is no point in $\operatorname{Diff}_{\Delta_2}((\Gamma_2,\Delta_2+t\Sigma_1)$ which depends on $t$. This contradicts Koll\'ar's gluing theorem.

We now analyze the pair over the nodes of $C$. By the construction of the ruled model, after possibly restricting $C$ to a neighborhood of its nodal locus, the family 
\[
(Y,\, \tfrac{1}{2}\Sigma + \tfrac{1}{2}D) \to C
\]
is obtained by pulling back the universal family from a twisted curve $\cC \to \sP_1$ and then passing to coarse spaces. Since $\Sigma$ is a section and is disjoint from $T$, every fiber has the form
\[
\bigl(\sR,\, \tfrac{1}{2} p_\Sigma + \tfrac{1}{2}(p_1 + p_2 + p_3)\bigr),
\]
with $p_\Sigma \neq p_i$ for all $i$. Such fibers satisfy that $(\sR,\, p_\Sigma + \tfrac{1}{2}(p_1 + p_2 + p_3))$ is slc, and therefore the original pair is slc over the nodal locus as well.
\end{proof}

Due to the presence of the section $\Sigma$, if $G \subseteq C_0$ is an irreducible component such that $Y_G$ falls into case~(3) of Theorem~\ref{thm: main theorem 2 of ISZ25}, then $Y_G$ necessarily has two irreducible components: one meeting $\Sigma$ and one disjoint from $\Sigma$. Moreover, we have the following.

\begin{corollary}
The pair 
\[
\bigl(Y,\; (\tfrac{1}{2}+\epsilon_1)\Sigma + \tfrac{1}{2}T + \epsilon_2 F\bigr)
\]
is relatively minimal over $C$.
\end{corollary}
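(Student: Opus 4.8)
The plan is to verify directly the two requirements of relative minimality over $C$: that $(Y,(\tfrac12+\epsilon_1)\Sigma+\tfrac12 T+\epsilon_2 F)$ is slc, and that $K_Y+(\tfrac12+\epsilon_1)\Sigma+\tfrac12 T+\epsilon_2 F$ is nef over the morphism $g\colon Y\to C$. For the first point there is essentially nothing to do: the pair is obtained from the slc (indeed KSBA-stable) pair $(Y,(\tfrac12+\epsilon_1)(\Sigma+T)+\epsilon_2 F)$ of \S\ref{sec:step 1} by lowering the coefficient of $T$, and lowering boundary coefficients preserves the slc condition; alternatively this is Corollary~\ref{cor_W_remains_lc_when_sigma_has_coef_1} combined with Lemma~\ref{lem:T and Sigma are disjoint}. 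So the real content is the relative nefness.

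For the nefness, I would take an arbitrary curve $\gamma\subseteq Y$ contracted by $g$ (equivalently $\gamma$ lies in $g^{-1}(p)$ for some closed point $p\in C$) and compute using the decomposition
\[
K_Y+(\tfrac12+\epsilon_1)\Sigma+\tfrac12 T+\epsilon_2 F\;=\;\bigl(K_Y+\tfrac12 R\bigr)+\epsilon_1\Sigma+\epsilon_2 F,\qquad R=\Sigma+T.
\]
Since $g\colon(Y,\tfrac12 R)\to C$ is an lc-trivial fibration (its general fiber being the log Calabi--Yau pair $(\bP^1,\tfrac12(p_1+\cdots+p_4))$, with moduli part $\bfM$ living on $C$), the canonical bundle formula gives $K_Y+\tfrac12 R\sim_\bQ g^*(L)$ for some $\bQ$-divisor $L$ on $C$, so $(K_Y+\tfrac12 R)\cdot\gamma=0$. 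Moreover $F$ is $g$-vertical, i.e. pulled back from $C$, so $F\cdot\gamma=0$ by the projection formula. Hence $(K_Y+(\tfrac12+\epsilon_1)\Sigma+\tfrac12 T+\epsilon_2 F)\cdot\gamma=\epsilon_1\,(\Sigma\cdot\gamma)$, and it remains to prove $\Sigma\cdot\gamma\ge 0$.

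Because $\Sigma$ is effective and $\bQ$-Cartier (Lemma~\ref{lem:Q-Cartierness of T and Sigma}), the inequality $\Sigma\cdot\gamma\ge 0$ holds as soon as $\gamma\not\subseteq\Supp\Sigma$. Since $\Sigma$ is the closure of the section $\Sigma_\eta$, it is irreducible and dominates $C$, so I would reduce the claim to showing that $\Sigma\cap g^{-1}(p)$ is zero-dimensional for every closed point $p\in C$. Over $C_\eta$ this is clear, as $\Sigma_\eta\to C_\eta$ is an isomorphism; over a smooth point $x\in C_0$ (including the finitely many exceptional points $x_i$) it follows from the computation $(F_x\cdot\Sigma)=1$ in the proof of Corollary~\ref{cor_W_remains_lc_when_sigma_has_coef_1}; and over a node of $C_0$ it follows from the ruled-model description (Corollary~\ref{cor:fiber over the node}), under which, locally near the node, $\Sigma$ is the section of $(Y,\tfrac12 R)\to C$ cut out by one of the marked points of the fibers parametrized by $\sP_1$, and such a marked section meets every fiber in a finite scheme.

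I expect the main obstacle to be precisely this last step — excluding a vertical curve inside $\Sigma$ over the nodes of $C_0$ and over the points $x_i$, where $Y$ is not a projective bundle. This should be controlled using the explicit local equations $xy=s^n$ (with $s$ a local equation of the component $G\subseteq C_0$ through the point and $t$ a uniformizer of $A$) from the proof of Lemma~\ref{lem:Q-Cartierness of T and Sigma}, together with the fact that in the $\sP_1$-model $\Sigma$ is literally a marked section. A secondary point to check is that the canonical bundle formula is available in the relative form $K_Y+\tfrac12 R\sim_{\bQ,g}0$ directly over $C$; since only the numerical statement $(K_Y+\tfrac12 R)\cdot\gamma=0$ is needed, and this holds fiberwise by adjunction on the log Calabi--Yau fibers, this is routine. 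I would close by remarking that the pair is relatively minimal but \emph{not} relatively ample over $C$: a component of a reducible fiber disjoint from $\Sigma$ has log-canonical degree exactly $0$, and these are precisely the curves contracted when one passes to the relative log canonical model over $C$ in the next step.
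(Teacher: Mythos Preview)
Your proposal is correct and follows essentially the same underlying idea as the paper. Both arguments hinge on the fact that $(Y,\tfrac12(\Sigma+T))\to C$ is lc-trivial, so that over $C$ the log canonical divisor in question is $\bQ$-linearly equivalent to $\epsilon_1\Sigma$; the paper's one-line proof simply records this reduction, while you carry out the remaining verification---that $\Sigma$ is nef over $C$ because $\Sigma\to C$ is a section and hence contains no vertical curves---explicitly, drawing on Corollary~\ref{cor_W_remains_lc_when_sigma_has_coef_1} and Corollary~\ref{cor:fiber over the node}. Your version is therefore a fleshed-out form of the same argument; note that the hypothesis $0<\epsilon_1\ll\epsilon_2$ mentioned in the paper is not actually used in your computation (and does not seem to be needed), so you have in fact isolated the essential content.
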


\begin{proof}
    This is because $0<\epsilon_1\ll\epsilon_2$ and $(Y,\frac{1}{2}(\Sigma+T))$ is relatively minimal over $C$.
\end{proof}


Thus, taking the log canonical model over $C$
\[
\left(Y,\, (\tfrac{1}{2}+\epsilon_1)\Sigma + \tfrac{1}{2}T + \epsilon_2 F\right)
\ \longrightarrow\ 
\left(Y^{(1)},\, (\tfrac{1}{2}+\epsilon_1)\Sigma^{(1)} + \tfrac{1}{2}T^{(1)} + \epsilon_2 F^{(1)}\right)
\]
contracts precisely those irreducible components of the surface pairs appearing in case~(3) of Theorem~\ref{thm: main theorem 2 of ISZ25} that do not meet $\Sigma$; see Figure \ref{fig:Y1 over C}. For a detailed description of the geometry of these components, see \cite[Proposition~4.6 and Figure~10]{ISZ25}. Furthermore, the fibers of  $\pi^{(1)} : Y^{(1)} \to C$ over the codimension-one points of $C$ are isomorphic to $\mathbb{P}^1$.

\begin{figure}
    \centering
    \includegraphics[width=.6\linewidth]{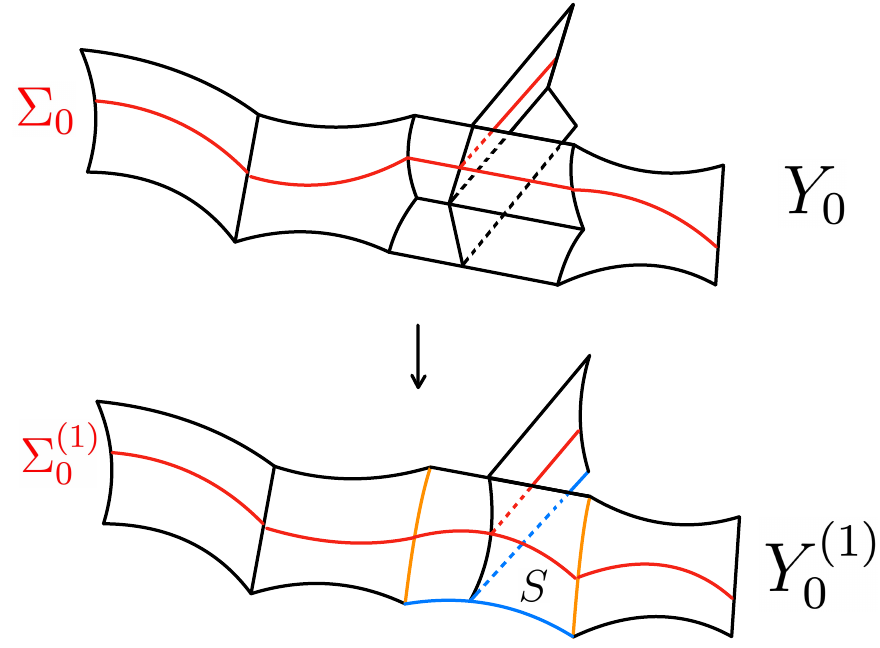}
    \caption{Log canonical model morphism $Y\rightarrow Y^{(1)}$. In blue, the curves to which some irreducible components of $Y_0$ are contracted to.}
    \label{fig:Y1 over C}
\end{figure}

\begin{corollary}\label{cor_after_the_contractions_we_still_have_P1_bundle_over_orb_curve}
Each irreducible component of $Y^{(1)}_0 \to C_0$ is the coarse space of a $\bP^1$–bundle over an orbifold curve.
\end{corollary}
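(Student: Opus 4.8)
The plan is to go through the trichotomy of Theorem~\ref{thm: main theorem 2 of ISZ25} for the irreducible components of $Y_0$ and to track how the birational contraction $Y\dashrightarrow Y^{(1)}$ described above affects each of them. The starting observation is that, by the explicit description of the log canonical model over $C$ just given, $Y\dashrightarrow Y^{(1)}$ is an isomorphism away from the irreducible components of the case~(3) surfaces disjoint from $\Sigma$, and contracts exactly those. Hence each irreducible component of $Y^{(1)}_0$ maps onto some irreducible component $G$ of $C_0$ and, writing $G^n\to G$ for the normalization, it is the image of either all of $Y|_{G^n}$ (when $Y|_{G^n}$ is of type~I or~II, in which case it is irreducible and $Y\dashrightarrow Y^{(1)}$ is an isomorphism there, so $Y|_{G^n}=Y^{(1)}|_{G^n}$) or of the unique irreducible component of $Y|_{G^n}$ meeting $\Sigma$ (when $Y|_{G^n}$ is of type~III, which in our situation has exactly two irreducible components, one meeting $\Sigma$ and one disjoint from it).

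Cases~I and~III are then essentially formal. In case~I, $Y|_{G^n}$ is by construction the coarse space of the projective bundle $\bP_{\cG^n}(\cV)$ over the orbifold curve $\cG^n$, so there is nothing to prove. In case~III, $Y|_{G^n}$ is the union of the two ruled surfaces $\bP_{\mtc{E}_1}(\cV)$ and $\bP_{\mtc{E}_2}(\cV)$ over the two connected components of the orbifold curve $\mtc{E}$, interchanged by the gluing involution, with $\Sigma$ meeting exactly one of them, say $\bP_{\mtc{E}_1}(\cV)$; after the contraction only $\bP_{\mtc{E}_1}(\cV)$ survives, and it is the coarse space of a $\bP^1$-bundle over the connected orbifold curve $\mtc{E}_1$, whose coarse space maps finitely and birationally, hence isomorphically, onto $G^n$. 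That the contraction does nothing further near the surviving piece follows from relative minimality of the pair over $C$: over $G$ the only curves on which the relative log canonical divisor has degree zero lie on the contracted component.

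The one case that requires genuine work is case~II, in which $Y|_{G^n}=Y^{(1)}|_{G^n}$ is a weighted blow-up of $\bP_{\cG^n}(\cV)$ and must be exhibited as the coarse space of a $\bP^1$-bundle over an orbifold curve. First, by Remark~\ref{rem:no difference over smooth locus}, over $C^{\sm}$ this surface already coincides with $\bP_{\cG^n}(\cV)$, so the weighted blow-up is supported over the finitely many points of $G^n$ lying over the nodes of $C_0$. Near each such point I would use the explicit local model of the ruled construction from \cite[Proposition~4.4]{ISZ25}: there the weighted blow-up appears as the coarse space of a quotient $[\bP^1\times\Spec k[\![u]\!]/\bmu_m]$ for a suitable faithful linear $\bmu_m$-action, which is precisely a $\bP^1$-bundle over the root-stack chart $[\Spec k[\![u]\!]/\bmu_m]$. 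Gluing these orbifold $\bP^1$-bundle charts onto the projective bundle $\bP_{\cG^n}(\cV)$ over $C^{\sm}$ produces an orbifold curve $\cG'$ with coarse space $G^n$, together with a rank-$2$ bundle $\cV'$ on $\cG'$, such that $\bP_{\cG'}(\cV')$ has coarse space $Y^{(1)}|_{G^n}$.

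I expect the main obstacle to be exactly this case-II analysis: one must verify that the weighted blow-ups produced by the ruled model of \cite{ISZ25} are of the special form coming from a stacky $\bP^1$-bundle — equivalently, that the cyclic quotient singularities appearing on $Y|_{G^n}$ over a node of $C_0$ patch together into a single orbifold chart on the base, rather than reflecting genuinely new surface geometry. This is a local computation tracking the weights and the $\bmu_m$-action, and it is the step that is not purely formal; cases~I and~III are read off directly from the structure results quoted above together with the explicit shape of $Y\dashrightarrow Y^{(1)}$.
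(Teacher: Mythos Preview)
Your approach is genuinely different from the paper's, and has a real gap in case~III that your sketch does not close.

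The paper does not use the trichotomy at all. It works globally on the threefold: pass to the smooth canonical covering stack $\sC \to C$ of the \emph{surface} $C$, observe that $Y^{(1)} \to C$ is a $\bP^1$-fibration away from finitely many points of $C_0$ and hence over a big open $U \subseteq \sC$, and extend the resulting morphism $U \to \cB\PGL_2$ to all of $\sC$ using \cite[Lemma~2.1]{twisted_map_2}. This produces a $\bP^1$-bundle $\sY \to \sC$ with coarse space $Y' \to C$, and one concludes $Y' \cong Y^{(1)}$ via Lemma~\ref{lem:isom in codim 1 implies isom}, using that $\Sigma^{(1)}$ is relatively ample on both and both threefolds are $S_2$. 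The statement for components of $Y^{(1)}_0$ is then immediate, with no case analysis and no need to track what the contraction $Y\to Y^{(1)}$ does to individual components of the central fiber.

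In your case~III, the assertion that ``after the contraction only $\bP_{\mtc{E}_1}(\cV)$ survives'' is not justified by relative minimality alone. Relative minimality over $C$ tells you that no curve on the $\Sigma$-component $P_1$ is contracted by the threefold map $Y \to Y^{(1)}$, so the induced map from $P_1$ to the corresponding component of $Y^{(1)}_0$ is finite and birational. But you have not shown that this component is normal --- slc-ness of $Y^{(1)}_0$ does not pass to its irreducible components --- so you cannot yet conclude it is isomorphic to $P_1$. Closing this gap requires either a direct normality argument along the image of the horizontal double locus or something equivalent to Lemma~\ref{lem:isom in codim 1 implies isom} on the central fiber, at which point you are essentially rederiving the paper's method. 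Your case~II likewise defers the crucial local identification to \cite[Proposition~4.4]{ISZ25} without checking that the specific weighted blow-ups appearing there are of the $[\bP^1 \times \Spec k[\![u]\!]/\bmu_m]$ form; this is the step you yourself flag as non-formal, and the cited proposition does not record it in that shape. The paper's global extension argument bypasses both issues.
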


\begin{proof}
Let $\sC \to C$ be the smooth canonical covering stack. Since $C$ is a family of nodal curves, its singularities are analytically of the form
\[
\Spec\bigl(k[\![x,y,t]\!]/(xy - t^n)\bigr).
\]
The canonical smooth covering stack replaces such a singularity with
\[
\bigl[\Spec(k[\![u,v]\!]) / \bmu_n\bigr],
\]
where $\bmu_n$ acts via $\zeta \cdot u = \zeta u$ and $\zeta \cdot v = \zeta^{-1} v$, and the coarse space map sends
\[
x \mapsto u^n,\qquad y \mapsto v^n,\qquad t \mapsto uv.
\] Away from finitely many points of $C_0$, the morphism $Y^{(1)} \to C$ is a $\bP^1$–fibration. Since $\sC$ and $C$ agree away from the nodes of $C_0$, there exists a big open subscheme $U \subseteq \sC$ together with a $\bP^1$–fibration over $U$. This produces a morphism
\[
U \longrightarrow \cB \PGL_2,
\]
and by \cite[Lemma~2.1]{twisted_map_2} this extends uniquely to a morphism
\[
\sC \longrightarrow \cB \PGL_2.
\]
Consequently, we obtain a $\bP^1$–fibration $\sY \to \sC$, and denote its coarse space by $Y' \to C$. Now, $Y^{(1)}$ carries a divisor--namely $\Sigma^{(1)}$--which is relatively ample over $C$. Its proper transform in $Y'$ gives a divisor $\Sigma'$ that is also relatively ample over $C$. As both $Y'$ and $Y^{(1)}$ are $S_2$, then by Lemma \ref{lem:isom in codim 1 implies isom}, $Y'$ is isomorphic to $Y^{(1)}$. In particular, each component of $Y^{(1)}_0$ is the coarse space of a $\bP^1$–bundle over an orbifold curve.
\end{proof}

We now run an MMP to decrease $\epsilon_2$ to $0$ in the pair
\[
\left(Y^{(1)},\; (\tfrac{1}{2}+\epsilon_1)\Sigma^{(1)} + \tfrac{1}{2}T^{(1)} + \epsilon_2 F\right).
\]
The canonical bundle formula for the lc-trivial fibration
\[
\left(Y^{(1)}, \tfrac{1}{2}\bigl(\Sigma^{(1)} + T^{(1)}\bigr)\right) \longrightarrow C
\]
yields
\[
K_{Y^{(1)}} + \tfrac{1}{2}\bigl(\Sigma^{(1)} + T^{(1)}\bigr)
\ \sim_{\bQ}\ 
\pi^{(1)*}(K_C + \bfM),
\]
where the discriminant divisor is trivial and $\bfM$ is the moduli divisor. Since 
\[
\left(Y,\tfrac{1}{2}(\Sigma+T)\right) \rightarrow \left(Y^{(1)}, \tfrac{1}{2}(\Sigma^{(1)} + T^{(1)})\right)
\]
is crepant, then these two pairs have the same moduli part; hence $K_C + \bfM$ is nef. To take the log canonical model of 
\[
\left(Y^{(1)}, (\tfrac{1}{2}+\epsilon_1)\Sigma^{(1)} + \tfrac{1}{2}T^{(1)} \right),
\]
one considers the log canonical model
\[
(C,\bfM) \longrightarrow (C^{\st},\bfM^{\st}),
\]
which contracts precisely:

\begin{enumerate}
    \item rational bridges $B \subseteq C_0$ along which $\deg_B(\bfM)=0$, and  
    \item rational tails $G$ of $C_0$ along which $\deg_G(\bfM)=1$.
\end{enumerate}

\smallskip

\noindent \textit{Rational bridges with $\deg_B(\bfM)=0$.}  
Let $B$ be such a rational bridge.  
By Corollary~\ref{cor_after_the_contractions_we_still_have_P1_bundle_over_orb_curve}, the restriction (see the component $S$ in Figure \ref{fig:Y1 over C})
\[
\left(Y^{(1)},(\tfrac{1}{2}+\epsilon_1)\Sigma^{(1)} + \tfrac{1}{2}T^{(1)}\right)\big|_B
\]
is birational to (see Figure \ref{fig:The pair})
\[
\left(\bP_B,\ (\tfrac{1}{2}+\epsilon_1)\Sigma_{\bP_B} + \tfrac{1}{2}\Sigma_2 + \Delta + F\right),
\]
where:
\begin{itemize}
    \item $\mtc{B}\to B\simeq\bP^1$ is a (possibly trivial) root stack;
    \item $\bP_B$ is the coarse space of a $\bP^1$–bundle $\bP_{\mtc{B}}(\mtc{E}) \to \mtc{B}$;
    \item $\Delta$ is the image in $Y^{(1)}$ of the horizontal double locus of $Y_0|_B$, i.e. where the two components of $Y|_B$ intersect;
    \item $\Sigma_{\bP_B}$ is a section of $\bP_B \to B$, and $\Sigma_2$ is determined by the relation  
    \[
    \tfrac{1}{2}T|_{Y^{(1)}_0} - \Delta - \tfrac{1}{2}\Sigma_{\bP_B} = \tfrac{1}{2}\Sigma_2;
    \]
    \item $F=F_1+F_2$ is the coarse space of the two fibers of $\bP_{\mtc{B}} \to \mtc{B}$.
\end{itemize}
\begin{figure}[H]
    \centering
    \includegraphics[width=0.4\linewidth]{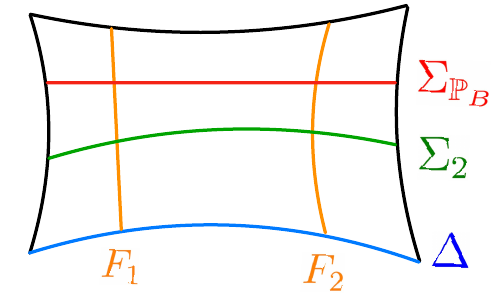}
    \caption{The pair $\left(\bP_B,\ \Sigma_{\bP_B} + \Sigma_2 + \Delta + F\right)$}
    \label{fig:The pair}
\end{figure}

\noindent
Observe that $\Sigma_{\bP_B}$, $\Sigma_2$, and $\Delta$ must be pairwise disjoint in order for 
$\deg \mathbf{M}|_B=0$. Indeed, $\Sigma_{\bP_B}$ and $\Sigma_2$ a re disjoint from Lemma \ref{lem:T and Sigma are disjoint}, and $\Sigma$ and $\Delta$ are disjoint as before the contraction $Y\to Y^{(1)}$ the section does not intersect the horizontal double locus of the central fiber. We just need to argue that $\Delta$ and $\Sigma_2$ are disjoint.
But if they met over a point $x$, the boundary part over that point would not be 0, as $\Delta$ has coefficient 1 and $\Sigma_2$ has coefficient $\frac{1}{2}$. The degree of the moduli part $\deg \mathbf{M}|_B$ agrees with the sum of the degree of the moduli part and the boundary part for $(Y^{(1)}|_B, \frac{1}{2}\Sigma_{\bP_B} + \frac{1}{2}\Sigma_2 + \Delta)$, so $\deg \mathbf{M}|_B>0$ which is a contradiction.

Consequently, when taking the canonical model of 
\[
\left(Y^{(1)},\left(\tfrac{1}{2}+\epsilon_1\right)\Sigma^{(1)}+\tfrac{1}{2}T^{(1)}\right),
\] the surface $Y^{(1)}|_B$ is contracted to the two fibers to which it is attached in neighboring components of $Y_0^{(1)}$ 

\medskip
\noindent \textit{Rational tails where $\mathbf{M}$ has degree~$1$.}
We now consider components of $Y^{(1)}_0$ lying over a rational tail $G$ of $C_0$ such that 
$\deg \mathbf{M}|_G=1$.

\begin{lemma}
The component $Y^{(1)}_G:=Y^{(1)}_0|_G$ is contracted by taking the log canonical model of 
\[
\left(Y^{(1)},(\tfrac{1}{2}+\epsilon_1)\Sigma^{(1)}+\tfrac{1}{2}T^{(1)}\right).
\]
\end{lemma}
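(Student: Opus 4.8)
The plan is to reduce the statement, as in the rational-bridge case treated above, to the compatibility between the relative log canonical model of the threefold and that of the base generalized pair $(C,\mathbf{M})$. First I would rewrite the log canonical divisor using the canonical bundle formula: since $K_{Y^{(1)}}+\tfrac12(\Sigma^{(1)}+T^{(1)})\sim_{\bQ}\pi^{(1)*}(K_C+\mathbf{M})$, we have
\[
L\ :=\ K_{Y^{(1)}}+\bigl(\tfrac12+\epsilon_1\bigr)\Sigma^{(1)}+\tfrac12 T^{(1)}\ \sim_{\bQ}\ \pi^{(1)*}(K_C+\mathbf{M})+\epsilon_1\Sigma^{(1)}.
\]
Because $G$ is a rational tail of $C_0$ it satisfies $G^2=-1$, so adjunction on the surface $C$ gives $\deg_G K_C=-1$; together with $\deg_G\mathbf{M}=1$ this yields $\deg_G(K_C+\mathbf{M})=0$, and hence $G$ is contracted to a point $p^{\st}\in C^{\st}$ by the log canonical model $(C,\mathbf{M})\to(C^{\st},\mathbf{M}^{\st})$.

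Next I would transfer this contraction to the threefold. For $\epsilon_1$ sufficiently small, by \cite[Theorem~1.2]{ISZ25} the relative log canonical model $Y^{(1)}\dashrightarrow Y^{\st}$ of $(Y^{(1)},(\tfrac12+\epsilon_1)\Sigma^{(1)}+\tfrac12 T^{(1)})$ over $\Spec A$ is compatible with $C\dashrightarrow C^{\st}$ and fibers over $C^{\st}$, and $Y^{\st}\to C^{\st}$ has pure one-dimensional fibers by the structural results recalled in \S\ref{sec:KSBA-moduli stacks for certain ruled surfaces}. Since $G$ maps to the point $p^{\st}$, the image of the prime divisor $Y^{(1)}_G$ is forced into the one-dimensional fiber $Y^{\st}_{p^{\st}}$; as $Y^{(1)}_G$ is two-dimensional, it must therefore be contracted.

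As a consistency check, and to pin down the geometry of the contraction, I would restrict $L$ to $Y^{(1)}_G$. By Corollary~\ref{cor_after_the_contractions_we_still_have_P1_bundle_over_orb_curve}, $Y^{(1)}_G$ is the coarse space of a $\bP^1$-bundle over an orbifold curve $\mtc{G}$ with coarse space $G\simeq\bP^1$; write $\sigma:=\Sigma^{(1)}|_{Y^{(1)}_G}$ for the induced section and $\tau:=T^{(1)}|_{Y^{(1)}_G}$ for the induced trisection. Since $\deg_G(K_C+\mathbf{M})=0$, the first summand of $L$ restricts to a numerically trivial class, so $L|_{Y^{(1)}_G}\equiv\epsilon_1\sigma$; and since $\Sigma^{(1)}$ and $T^{(1)}$ are disjoint by Lemma~\ref{lem:T and Sigma are disjoint}, $\sigma$ is a section disjoint from the trisection $\tau$, which on the underlying Hirzebruch surface forces $\sigma^2\le0$. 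Hence $L|_{Y^{(1)}_G}$ is not big and $Y^{(1)}_G$ cannot survive as a divisor on the log canonical model. I expect the main obstacle to be precisely the first reduction — making rigorous that the relative log canonical model of the threefold factors through that of $(C,\mathbf{M})$, which uses the triviality of the discriminant divisor and \cite[Theorem~1.2]{ISZ25} — together with the careful bookkeeping of the contributions of the attaching fiber and of the stacky points of $\mtc{G}$ needed to make the displayed restriction computation precise.
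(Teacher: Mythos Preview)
Your ``consistency check'' is in fact the paper's actual proof: one restricts the log canonical divisor to $Y^{(1)}_G$, observes that it is numerically $\epsilon_1\sigma$, and shows $\sigma^2\le0$. However, the paper establishes $\sigma^2\le0$ by a different route. You claim that $\sigma$ being disjoint from the trisection $\tau$ on a Hirzebruch surface forces $\sigma^2\le0$; this is false as stated---on $\bF_n$ with $n>0$, the positive section (self-intersection $n$) is disjoint from three copies of the negative section. The paper instead uses adjunction for $\Sigma$ on the stacky $\bP^1$-bundle $\bP_\cG$:
\[
(K_{\bP_G}+\Sigma|_{\bP_G}+F)\cdot\Sigma|_{\bP_G}=\deg\omega_{\Sigma_{\bP_\cG}}(p)=\deg\omega_{\bP^1}(p)=-1,
\]
and combines this with $(K_{\bP_G}+\tfrac12\Sigma|_{\bP_G}+\tfrac12T|_{\bP_G}+F)\cdot\Sigma|_{\bP_G}=0$ and $T\cdot\Sigma=0$ to get $(\Sigma|_{\bP_G})^2=-2$. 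Your disjointness argument can be repaired by also invoking the lc-trivial relation as a second numerical constraint, but you would still need the stacky bookkeeping to handle the orbifold points of $\cG$---which is why the paper passes through $\bP_\cG$ and uses Corollary~\ref{cor_W_remains_lc_when_sigma_has_coef_1} to justify the adjunction.

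Your primary argument, by contrast, is circular as written. You invoke the structural results of \S\ref{sec:KSBA-moduli stacks for certain ruled surfaces} to assert that $Y^{\st}\to C^{\st}$ has pure one-dimensional fibers, but those results concern the KSBA limit of $(Y,(\tfrac12+\epsilon_1)R+\epsilon_2F)$ with both $\Sigma$ and $T$ at coefficient $\tfrac12+\epsilon_1$; they say nothing once the coefficient of $T$ has been lowered to $\tfrac12$. In the paper's logic, this lemma together with the rational-bridge case is precisely what is used to \emph{deduce} that $Y^{\st}\to C^{\st}$ has one-dimensional fibers (see the paragraph immediately following the lemma). Citing \cite[Theorem~1.2]{ISZ25} does not obviously close the gap either: that theorem lifts extremal MMP contractions on the base to the threefold, whereas here $\deg_G(K_C+\bfM)=0$, so contracting $G$ is not a $K$-negative step but a contraction to the canonical model of an already-nef class. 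You correctly flag this as the main obstacle, and the paper resolves it by the direct computation you relegated to a consistency check.
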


\begin{proof}
By the classification of the components of $Y^{(1)}_0$, the pair $\left(Y^{(1)},(\tfrac{1}{2}+\epsilon_1)\Sigma^{(1)}+\tfrac{1}{2}T^{(1)}\right)\big|_G$ is of the form
\[
\left(\bP_G,(\tfrac{1}{2}+\epsilon_1)\Sigma^{(1)}|_{\bP_G} 
+\tfrac{1}{2}T^{(1)}|_{\bP_G} + F\right),
\]
where $\bP_G$ is the coarse space of a $\mathbb{P}^1$-bundle $\bP_\cG$ over a stacky curve $\cG$, and $F$ is the coarse space of one fiber over the unique point of $\cG$ lying above the node of $C_0$.
By assumption,
\begin{equation}\label{eq_lc_trivial}
    K_{\bP_G}+\tfrac{1}{2}\Sigma^{(1)}|_{\bP_G} 
    +\tfrac{1}{2}T^{(1)}|_{\bP_G} + F \sim_{\mathbb{Q}} 0.
\end{equation}
Thus the desired log canonical model of $\bP_G$ is also the ample model of $\Sigma^{(1)}|_{\bP_G}$.  
Since $\Sigma^{(1)}|_{\bP_G}$ is irreducible, it suffices to show that $(\Sigma^{(1)}|_{\bP_G})^2\le 0$.

{By Corollary \ref{cor_W_remains_lc_when_sigma_has_coef_1}, and since $\Sigma$ and $T$ don't intersect from Lemma \ref{lem:T and Sigma are disjoint}, the pair
\[
\left(\bP_G,\Sigma|_{\bP_G}+\tfrac{1}{2}T|_{\bP_G}+F\right)
\]
is lc, and hence the pair
\[
\left(\bP_\cG,\Sigma_{\bP_\cG}+\tfrac{1}{2}\mathcal{T}_{\bP_\cG}+\cF\right)
\]
is also lc}, where we use curly letters to denote the reduced preimages under the coarse space map 
$\phi\colon \bP_\cG\to\bP_G$.  The morphism $\phi$ is \'etale in codimension~$1$ away from the unique point over which the tail attaches, and otherwise ramified exactly along $F$, which appears with coefficient~$1$ in the boundary.  
Thus one has
\[
\phi^*(K_{\bP_G}+F)\sim_\bQ K_{\bP_\cG}+\cF,\qquad
\phi^*(\Sigma|_{\bP_G})\sim_\bQ \Sigma_{\bP_\cG},\qquad
\phi^*(T|_{\bP_G})\sim_\bQ \mathcal{T}_{\bP_\cG},
\] and consequently
\begin{align}\label{align_eq}
  \big(K_{\bP_G}+\Sigma|_{\bP_G}+F  .\  \Sigma|_{\bP_G}\big)
  &= \big(\phi^*(K_{\bP_G}+\Sigma|_{\bP_G}+F)  .\  \phi^*\Sigma|_{\bP_G}\big) \\
  &= \big(K_{\bP_\cG}+\Sigma_{\bP_\cG}+\cF . \ \Sigma_{\bP_\cG}\big) \\
  &= \deg\, \omega_{\Sigma_{\bP_\cG}}(p),
\end{align}
where $p=\cF\cap \Sigma_{\bP_\cG}$ is a single (stacky) point with reduced structure.  
If $\Sigma_{\bP_\cG}$ is an orbifold $\mathbb{P}^1$ with a single stacky point, and 
$\psi:\Sigma_{\bP_\cG}\to\mathbb{P}^1$ is the coarse map, then
\[
\omega_{\Sigma_{\bP_\cG}}(p)=\psi^*(\omega_{\mathbb{P}^1}(p)).
\] On the other hand, from \eqref{eq_lc_trivial},
\[
K_{\bP_G}+\tfrac{1}{2}\Sigma|_{\bP_G}+\tfrac{1}{2}T|_{\bP_G}+F\ 
\sim_\bQ \ \pi^*(\omega_{\mathbb{P}^1}(p)+\mathbf{M}|_G)\ \sim_\bQ\  0,
\]
where $p\in G$ is the attaching point.  
Since $\deg \mathbf{M}|_G\ge 0$, we obtain
\begin{align*}
\left(K_{\bP_G}+\tfrac{1}{2}\Sigma|_{\bP_G}
+\tfrac{1}{2}T|_{\bP_G}+F\right)\cdot\Sigma|_{\bP_G} &= 0,\\[4pt]
\left(K_{\bP_G}+\Sigma|_{\bP_G}
+\tfrac{1}{2}T|_{\bP_G}+F\right)\cdot\Sigma|_{\bP_G} &= -\deg(\mathbf{M}|_G)\le 0.
\end{align*}
Thus $(\Sigma|_{\bP_G})^2\le 0$, completing the proof.
\end{proof}

As a consequence, denoting by
\[
\left(Y^{(1)},(\tfrac{1}{2}+\epsilon_1)\Sigma^{(1)}+\tfrac{1}{2}T^{(1)}\right)
\ \longrightarrow\
\left(Y^{\st},(\tfrac{1}{2}+\epsilon_1)\Sigma^{\st}+\tfrac{1}{2}T^{\st}\right)
\]
the log canonical model morphism, we see that 
$\pi^{\st}:Y^{\st}\to C^{\st}$ has $1$-dimensional fibers and is a $\mathbb{P}^1$-fibration in codimension~$1$.  
Proceeding as in Corollary \ref{cor_after_the_contractions_we_still_have_P1_bundle_over_orb_curve}, and using \cite[Lemma~2.1]{twisted_map_2}, we obtain that $C^{\st}$ is the coarse moduli space of a family of twisted curves $\cC^{\st}$ admitting a $\mathbb{P}^1$-fibration 
$\cY^{\st}\to\cC^{\st}$ whose coarse space is $Y^{\st}\to C^{\st}$:
\[
\xymatrix{
\cY^{\st}\ar[r]\ar[d] & Y^{\st}\ar[d] \\
\cC^{\st}\ar[r] & C^{\st}.
}
\] Taking the double cover of $Y^{\st}$ as in Corollary \ref{cor_to_take_ksba_limit_one_can_take_quotient_first}, 
branched along $D^{\st}=\Sigma^{\st}+T^{\st}$, yields a family of elliptic surfaces
\[
(X^{\st},2\epsilon_1 S^{\st}) \longrightarrow C^{\st},
\]
where $S^{\st}$ is the reduced preimage of $\Sigma^{\st}$.  
Finally, the fibers of $Y\to C^{\st}$ over the smooth locus of $C^{\st}\to\operatorname{Spec}A$
are of the Weierstrass form $y^2 z = x^3 + Axz^2 + Bz^3$.  
It follows that all geometric fibers are of this form, again using \cite[Lemma~2.1]{twisted_map_2} and the fact that the good moduli space of $[\mathbb{A}^2/\mathbb{G}_m]$, the moduli of planar Weierstrass models, is a point; see \cite[Remark~7.10]{BFHIMZ24} for a similar argument.

\smallskip
This completes the proof of Theorem~\ref{thm_W_fib_with_a_section}.

\subsection{Applications of Theorem \ref{thm_W_fib_with_a_section}}

The first application of Theorem~\ref{thm_W_fib_with_a_section} is that it allows us to prove the following statement \emph{without running any explicit MMP}. Indeed, the result is a direct consequence of Theorem~\ref{thm_W_fib_with_a_section}.  

Let $\epsilon>0$ be a rational number, and $\overline{\mtc{W}}^{\KSBA}(\epsilon)$ denote the irreducible component of the KSBA moduli stack whose general point parametrizes a pair $(X,\epsilon S)$ admitting a Weierstrass fibration $(X,S)\longrightarrow C$ over a smooth curve $C$.

\begin{theorem}[\textup{cf. \cite[Theorem 1.2]{inchiostro_elliptic}}]\label{thm:weierstrass fibration in general}
     Let $0<\epsilon\ll1$ be a (rational) number. Then any pair $(X,\epsilon S)$ parametrized by $\ove{\mtc{W}}^{\KSBA}(\epsilon)$ admits a (not necessarily flat) fibration $$f\ :\ (X,\epsilon S)\ \longrightarrow \ C$$ to a nodal curve $C$ with purely $1$-dimensional fibers such that $$X|_{C^{\sm}}\ \longrightarrow \ C^{\sm}$$ is flat with integral fibers. Moreover, for any irreducible component $G$ of $C$ with normalization $G^\nu$, the surface $X|_{G^\nu}$ is one of the following:
    \begin{enumerate}
        \item a normal elliptic surface $X|_{G^\nu}\rightarrow G^\nu$, or
        \item a non-normal fibered surface $X|_{G^\nu}\rightarrow G^\nu$, whose general fiber is an integral nodal curve or arithmetic genus $1$.
    \end{enumerate}
\end{theorem}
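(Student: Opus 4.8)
The plan is to deduce the statement directly from Theorem~\ref{thm_W_fib_with_a_section} applied to a general one-parameter smoothing. First I would fix a pair $(X_0,\epsilon S_0)$ parametrized by $\ove{\mtc{W}}^{\KSBA}(\epsilon)$; since this moduli stack is proper and irreducible, $(X_0,\epsilon S_0)$ is the KSBA-stable limit of a general Weierstrass fibration $(\mtc{X}_\eta,\epsilon \mtc{S}_\eta)\to \mtc{C}_\eta$ over the generic point of a DVR $A$. We may freely replace $A$ by a finite extension, as this does not change the central fiber $(X_0,\epsilon S_0)$, so Theorem~\ref{thm_W_fib_with_a_section} (with $\vec{a}=0$) produces a fibration $f\colon(X,\epsilon S)\to C\to\Spec A$ with $C\to\Spec A$ a family of nodal curves; the central fibers give the desired morphism $f_0\colon(X_0,\epsilon S_0)\to C_0$ to the nodal curve $C_0$, with purely one-dimensional fibers. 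Moreover part~(2) exhibits $C_0$ as the coarse space of a twisted curve $\cC_0$ carrying a family $(\cX_0,\cS_0)\to\cC_0$ of Weierstrass cubics $y^2x=x^3+axz^2+bz^3$, with $(X_0,S_0)\to C_0$ its coarse space, and part~(3) says the fibers over codimension-one points of $C_0$ are at worst nodal.

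For the properties over $C^{\sm}$: a twisted curve has stacky points only over the nodes of its fibers over $\Spec A$, so the coarse-space map $\cC_0\to C_0$ is an isomorphism over $C^{\sm}$. Hence $X_0|_{C^{\sm}}\to C^{\sm}$ coincides with the Weierstrass family $\cX_0|_{C^{\sm}}\to\cC_0|_{C^{\sm}}$. It is flat (alternatively: $X_0$ is slc, hence $S_2$ and therefore Cohen--Macaulay, $C^{\sm}$ is regular, and the fibers are equidimensional, so miracle flatness applies), and all of its fibers are integral plane cubics; in particular no fiber over $C^{\sm}$ is non-reduced, since the section $\cS_0$ meets every fiber in a single reduced point (the usual reason a Weierstrass model carries no multiple fibers).

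For the classification of $X_0|_{G^\nu}$ I would argue componentwise. Fix a component $G$ of $C_0$ with normalization $G^\nu$, and pull back the Weierstrass data $(a,b)$ to $G^\nu$. By the construction in the proof of Theorem~\ref{thm_W_fib_with_a_section}, $X_0|_{G^\nu}$ is the double cover of $Y^{\st}_0|_{G^\nu}$ branched along $(\Sigma^{\st}+T^{\st})|_{G^\nu}$, where $\Sigma^{\st}$ is a section, $T^{\st}$ a trisection, $\Sigma^{\st}\cap T^{\st}=\varnothing$ by Lemma~\ref{lem:T and Sigma are disjoint}, and $Y^{\st}_0|_{G^\nu}$ is the coarse space of a $\bP^1$-bundle over an orbifold curve with coarse space $G^\nu$ (Corollary~\ref{cor_after_the_contractions_we_still_have_P1_bundle_over_orb_curve}), hence normal. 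If the discriminant $\Delta=4a^3+27b^2$ does not vanish identically on $G^\nu$, then over the generic point of $G$ the divisor $T^{\st}|_{G^\nu}$ is reduced and disjoint from $\Sigma^{\st}$, so the branch divisor is reduced; a double cover of a normal surface branched along a reduced divisor is normal, and its generic fiber is the double cover of $\bP^1$ branched at four distinct points, i.e.\ a smooth genus-one curve. This is case~(1). If instead $\Delta\equiv 0$ on $G^\nu$, the branch divisor is non-reduced over the generic point of $G$, so $X_0|_{G^\nu}$ is non-normal along the curve swept out by the singular points of the fibers; the generic fiber is then a singular integral plane cubic, and it must be nodal rather than cuspidal, since a generically cuspidal Weierstrass family has a cusp in codimension one and thus fails to be demi-normal, contradicting the slc-ness of $X_0$. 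This is case~(2).

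The main obstacle I anticipate lies in the last paragraph: making the dichotomy rigorous over the finitely many points of $G^\nu$ lying above the nodes of $C_0$. In case~(1) one must check that the branch divisor $(\Sigma^{\st}+T^{\st})|_{G^\nu}$ stays reduced there and that the double cover stays normal, which requires controlling the local orbifold structure of $Y^{\st}_0|_{G^\nu}$ coming from Theorem~\ref{thm: main theorem 2 of ISZ25}; and one must rule out all generic fibers other than smooth or nodal cubics, for which the combination of the semi-log-canonical hypothesis with the Weierstrass normal form of Theorem~\ref{thm_W_fib_with_a_section}(2) is exactly what is needed. Everything else is a direct translation of Theorem~\ref{thm_W_fib_with_a_section}.
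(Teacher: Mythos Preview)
Your approach is correct and matches the paper's own treatment: the paper states the theorem as a ``direct consequence of Theorem~\ref{thm_W_fib_with_a_section}'' with no further argument, and you supply exactly the details needed to unpack that claim. Your identification of the dichotomy via whether the discriminant $\Delta$ vanishes identically on $G^\nu$, and the exclusion of the cuspidal case via demi-normality of the slc limit, are the right mechanisms and are implicit in part~(3) of Theorem~\ref{thm_W_fib_with_a_section}.
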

In particular, the surface pairs appearing on the boundary of the KSBA moduli space 
parametrizing pairs
\[
\left(Y_\eta,\left(\tfrac{1}{2}+\epsilon_1\right)\Sigma_\eta
+\tfrac{1}{2}T_\eta+\left(\tfrac{1}{12}+\epsilon_2\right)F_\eta\right),
\]
which arise as degenerations of the $2{:}1$ quotients of elliptic surfaces with $j$-map of degree~$12$,
have KSBA-stable limits fibered over a family of nodal curves.
The base family is a stable generalized pair $(C,\mathbf{M}+\tfrac{1}{12}\mathbf{p})$, 
where $\mathbf{p}$ is a divisor of degree~$12$ and $\deg \mathbf{M}=1$.

The special fiber consists of a nodal curve
\[
C_0 = C_{0,1}\cup C_{0,2} 
\]
with marked points $p_1,\ldots,p_{12}$ and a non–negative $\mathbb{Q}$-divisor $\mathbf{M}_0$ such that
\[
K_{C_0} + \tfrac{1}{12}(p_1+\cdots+p_{12}) + \mathbf{M}_0
\]
is ample.
A direct combinatorial check shows that $C_0$ has at most two irreducible components; 
if it has two, then each component carries a $j$-map of degree~$6$ and exactly six of the marked points.

\medskip

Now increase the coefficient of the section from $\tfrac{1}{2}+\epsilon_1$ to $\tfrac{1}{2}+12\epsilon_2$.
For a general fiber, the pair
\[
\left(Y_\eta,\left(\tfrac{1}{2}+\epsilon_1\right)\Sigma_\eta
+\tfrac{1}{2}T_\eta+\left(\tfrac{1}{12}+\epsilon_2\right)F_\eta\right)
\]
is no longer KSBA-stable.
Nevertheless, for any KSBA-stable limit
\[
\left(Y,\left(\tfrac{1}{2}+\epsilon_1\right)\Sigma
+\tfrac{1}{2}T+\left(\tfrac{1}{12}+\epsilon_2\right)F\right)
\]
the log canonical divisor is nef.
Indeed, it suffices to check that for every irreducible component $\Gamma$ of the special fiber of the section~$\Sigma$,
\[
\left(K_Y + \left(\tfrac{1}{2}+\epsilon_1\right)\Sigma 
+\tfrac{1}{2}T+\left(\tfrac{1}{12}+\epsilon_2\right)F\right)\!\cdot\! \Gamma = 0.
\]

This follows from the equalities
\[
\left(K_Y + \tfrac{1}{2}\Sigma + \tfrac{1}{2}T + \tfrac{1}{12}F\right)\!\cdot\!\Gamma = 0, 
\qquad
T\!\cdot\!\Gamma =0,
\qquad
(K_Y+\Sigma)\!\cdot\!\Gamma = -1,
\]
where the first equality comes from the canonical bundle formula, 
and the latter two follow from the fact that $(Y,\Sigma)$ is slc and smooth along $\Sigma$ 
away from the nodes of the central fiber of $Y\to \operatorname{Spec} A$.

Thus, taking the canonical model under the specialization $\epsilon_1 = 12\epsilon_2$ contracts the section, and yields the following. In particular, the wall-crossing morphism of \cite{ascher2023wall, meng2023mmp} give a map $\ove{\mtc{W}}^{\KSBA}_{\textup{rat}}\big(\epsilon_1,\frac{1}{12}+\epsilon_2\big)\to \overline{\cM}^{\KSBA}$ to the KSBA moduli space with coefficient $\frac{1}{12}+\epsilon_2$ and given volume, which contracts the divisor with coefficient $\epsilon_1$.

\begin{theorem}[rational elliptic surfaces, cf. \textup{\cite[Theorem 1.1 (a), (b)]{AB_del_pezzo}}]\label{thm:rational surfaces} Let $0<\epsilon_1\ll \epsilon_2\ll1$ be two (rational) numbers. Then any pair $\big(X,\epsilon_1 S+(\frac{1}{12}+\epsilon_2)F\big)$ parametrized by $\ove{\mtc{W}}^{\KSBA}_{\textup{rat}}\big(\epsilon_1,\frac{1}{12}+\epsilon_2\big)$ admits a (not necessarily flat) fibration $$\textstyle f\ :\ \big(X,\epsilon_1 S+(\frac{1}{12}+\epsilon_2)F\big)\ \longrightarrow \ C$$ to a nodal curve $C$ with purely $1$-dimensional fibers satisfying that 
       \begin{itemize}
           \item $S$ is a section of $X\rightarrow C$;
           \item $F$ consists of some $f$-fibers over $C^{\sm}$; and
           \item $X|_{C^{\sm}} \rightarrow C^{\sm}$ is flat with integral fibers.
       \end{itemize} Any irreducible component $G$ of $C$ is isomorphic to $\bP^1$. If $C$ is irreducible, then $X|_{G}$ is 
    \begin{enumerate}
        \item either a normal elliptic surface $X|_{G}\rightarrow G$ with ADE singularities; or
        \item a non-normal fibered surface $X|_{G}\rightarrow G$, whose general fiber is an integral nodal curve or arithmetic genus $1$.
    \end{enumerate}
    If $C$ is reducible, then $C$ has two components $G_1,G_2$, and $X|_{G_i}\rightarrow G_i$ is a normal elliptic surface for $i=1,2$. Taking the canonical model for $\epsilon_1=12 \epsilon_2$ contracts the section, and gives the KSBA-stable limits of smooth degree 9 Del Pezzo surfaces with marking being the images of the 12 singular fibers.
\end{theorem}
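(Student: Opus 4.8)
The plan is to deduce everything from Theorem~\ref{thm_W_fib_with_a_section} applied with the rational parameters, then carry out a combinatorial analysis of the base curve, and finally invoke the wall-crossing computation sketched just above. Fix a pair $(X,\epsilon_1 S+(\tfrac{1}{12}+\epsilon_2)F)$ parametrized by $\ove{\mtc{W}}^{\KSBA}_{\textup{rat}}(\epsilon_1,\tfrac{1}{12}+\epsilon_2)$. By separatedness of $\overline{\cM}^{\KSBA}$ and the definition of the moduli stack, it is the KSBA-stable limit over the spectrum of a DVR $A$ of a general rational Weierstrass fibration $(X_\eta,\epsilon_1 S_\eta+(\tfrac{1}{12}+\epsilon_2)F_\eta)\to C_\eta$; for such a general pair $C_\eta\simeq\bP^1$, all twelve singular fibers are of type $I_1$, and $\deg\mathbf{M}_\eta=\chi(\cO_{X_\eta})=1$. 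Applying Theorem~\ref{thm_W_fib_with_a_section} with $\vec a=(\tfrac{1}{12}+\epsilon_2,\dots,\tfrac{1}{12}+\epsilon_2)$, $n=12$, exhibits $(X,\epsilon_1 S+(\tfrac{1}{12}+\epsilon_2)F)$, after a finite base change, as the special fiber of a family $\mathcal{X}\xrightarrow{\,f\,}(\mathcal{C},(\tfrac{1}{12}+\epsilon_2)\mathbf{p}+\mathbf{M})\to\Spec A$ as in \emph{loc.\ cit.}, with $\deg\mathbf{M}=1$, $\deg\mathbf{p}=12$, the geometric fibers of $(\cX,\cS)\to\mathcal{C}$ in Weierstrass form, the fibers over codimension-one points at worst nodal, and $C=\mathcal{C}_0$ the nodal curve of the statement. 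Restricting to the special fiber already gives the first block of the statement: $f|_X$ has purely $1$-dimensional fibers; $S$ is a section; a marked fiber over a node of $C$ would violate slc, so $F$ is a sum of $f$-fibers over $C^{\sm}$; and $X|_{C^{\sm}}\to C^{\sm}$ is flat (miracle flatness, $X$ being Cohen--Macaulay with equidimensional fibers over the regular curve $C^{\sm}$) with integral fibers, since every fiber over $C^{\sm}$ is a Weierstrass cubic.

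Next I would analyze the base pair $(C,(\tfrac{1}{12}+\epsilon_2)\mathbf{p}_0+\mathbf{M}_0)$. Since $C_\eta\simeq\bP^1$, the curve $C$ is connected nodal of arithmetic genus $0$, hence a tree of smooth rational curves, so every component $G\simeq\bP^1$. For a component $G$ with $\delta_G$ nodes of $C$ on it and $m_G$ marked points (with multiplicity), one has the key proportionality $\deg(\mathbf{M}_0|_G)=m_G/12$, coming from the relation $12\mathbf{M}\sim_{\bQ}\mathbf{p}$ (valid on $C_\eta\simeq\bP^1$, hence on $\mathcal{C}$, restricted to $G$). Ampleness of $K_C+(\tfrac{1}{12}+\epsilon_2)\mathbf{p}_0+\mathbf{M}_0$ on $G$ then reads $-2+\delta_G+m_G(\tfrac{1}{6}+\epsilon_2)>0$, and summing over the $k$ components (so $\sum\delta_G=2(k-1)$, $\sum m_G=12$) gives a total of $12\epsilon_2$; hence each summand lies in $(0,12\epsilon_2)$, or equals $12\epsilon_2$ when $k=1$. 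A component with $\delta_G\ge 3$ contributes $\ge 1$; a component with $\delta_G=2$ contributes $m_G(\tfrac16+\epsilon_2)$, which is $0$ if $m_G=0$ and $\ge\tfrac16+\epsilon_2$ otherwise — in all cases incompatible with membership in $(0,12\epsilon_2)$. Therefore every component has $\delta_G\le 1$, so $k\le 2$. If $k=1$, then $C=\bP^1$ with $m=12$; if $k=2$, the two components meet at one node, $m_1+m_2=12$, ampleness on $G_i$ reads $m_i(\tfrac16+\epsilon_2)>1$, forcing $m_i\ge 6$, hence $m_1=m_2=6$ and $\deg(\mathbf{M}_0|_{G_i})=\tfrac12$: each component carries six marked fibers and a degree-$6$ $j$-map.

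For a component $G$, the surface $X|_{G^\nu}\to G^\nu\simeq\bP^1$ is by Theorem~\ref{thm_W_fib_with_a_section}(2) a family of Weierstrass cubics carrying a section, so its general fiber is integral and, by part~(3), not cuspidal. If it is smooth, then $X|_{G^\nu}$ is a normal elliptic surface whose Weierstrass model is regular in codimension one, so its singularities are isolated hypersurface singularities over finitely many more degenerate fibers, and these are rational double points of type ADE (the classical analysis of Weierstrass models; equivalently they appear in Table~\ref{table:classification of singularities} through the double-cover description). If it is nodal, then the $j$-map is identically $\infty$ on $G$ and $X|_{G^\nu}$ is a non-normal fibered surface with integral nodal general fiber of arithmetic genus~$1$, as in Theorems~\ref{thm:elliptic with a bisection moduli} and~\ref{thm:Singularities over smooth locus II}. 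When $k=2$, each $G_i$ carries a degree-$6$, hence non-constant, $j$-map, so the general fiber of $X|_{G_i}$ is smooth and $X|_{G_i}$ is a normal elliptic surface with ADE singularities; this gives the reducible case.

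Finally, to contract the section I would re-run the computation recorded just before the theorem, now on the $X$-side: $K_X+(\tfrac{1}{12}+\epsilon_2)F\sim_{\bQ}f^*\!\big(K_C+(\tfrac{1}{12}+\epsilon_2)\mathbf{p}_0+\mathbf{M}_0\big)$ is $f$-trivial and nef, while by adjunction and the canonical bundle formula $S\cong C$ has $S\cdot\Gamma=-\deg(\mathbf{M}_0|_G)$ for every component $\Gamma$ of $S_0$ lying over $G$; combining these, at $\epsilon_1=12\epsilon_2$ the log canonical divisor has degree $0$ on every such $\Gamma$, so its canonical model contracts $S$. The resulting pairs are precisely the KSBA-stable limits of the smooth degree-$9$ del Pezzo surfaces marked by the images of the twelve singular fibers, which recovers \cite[Theorem~1.1(a),(b)]{AB_del_pezzo}. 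The step I expect to be the main obstacle is the base analysis of the second paragraph — in particular the proportionality $\deg(\mathbf{M}_0|_G)=m_G/12$, i.e.\ controlling the restriction of the moduli part of the canonical bundle formula to each component of $C$ — since this is precisely where invariance of the moduli part under the degeneration, together with the $I_1$-hypothesis on the generic fibers, is genuinely needed to rule out the asymmetric reducible configurations.
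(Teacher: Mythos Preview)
Your approach matches the paper's: invoke Theorem~\ref{thm_W_fib_with_a_section}, run the ampleness combinatorics on the stable base generalized pair, and carry out the wall-crossing at $\epsilon_1=12\epsilon_2$ exactly as in the paragraph preceding the theorem (the paper does that computation on the quotient $Y$, you on $X$, but the numerics are identical). The paper's own treatment of the combinatorial step is only the phrase ``a direct combinatorial check,'' so your second paragraph already supplies more than the text does.

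The step you single out at the end is the one genuine gap, and the justification you give for the proportionality is not valid as written: from $12\mathbf{M}_\eta\sim_{\bQ}\mathbf{p}_\eta$ on $C_\eta\simeq\bP^1$ one only obtains $12\mathbf{M}\sim_{\bQ}\mathbf{p}+V$ on $\mathcal{C}$ for some vertical $\bQ$-divisor $V$, and there is no reason a priori for $(V\cdot G)$ to vanish. The honest route is through the Weierstrass discriminant: since $\mathbf{M}=c_1(L)$ for the fundamental line bundle and $\Delta\in H^0(\mathcal{C},L^{\otimes 12})$, one has $12\mathbf{M}\sim\divib(\Delta)$, and the horizontal part of $\divib(\Delta)$ is exactly the union of the twelve $I_1$-sections, i.e.\ $\mathbf{p}$. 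Thus $V$ is supported on those components of $C_0$ on which $\Delta$ vanishes identically, equivalently those with generically nodal fiber. Once such components are excluded in the reducible case --- which is the content of the assertion in the theorem that $X|_{G_i}$ is \emph{normal} for $i=1,2$ --- one gets $V=0$, hence $12\mu_G=m_G$, and your derivation of $m_1=m_2=6$ goes through. The exclusion itself is not a purely numerical consequence of ampleness (a leaf with generically nodal fiber but $\mu_G>0$, coming from isolated cuspidal fibers, can pass the ampleness test), so this is the point where one genuinely has to use more than the combinatorics of $(C_0,(\tfrac{1}{12}+\epsilon_2)\mathbf{p}_0+\mathbf{M}_0)$.
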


Similarly, the following follows from Theorem \ref{thm_W_fib_with_a_section}.
\begin{theorem}[\textup{elliptic K3 surfaces, cf. \cite[Theorem 9.1.4]{Bru15}}]
     Let $0<\epsilon_1\ll \epsilon_2\ll1$ be two (rational) numbers. Then any pair $\big(X,\epsilon_1 S+\epsilon_2 F\big)$ parametrized by $\ove{\mtc{W}}^{\KSBA}_{\textup{K3}}\big(\epsilon_1,\epsilon_2\big)$ admits a (not necessarily flat) fibration $$\textstyle f\ :\ (X,\epsilon_1 S+\epsilon_2F)\ \longrightarrow \ C$$ to a nodal curve $C$ with purely $1$-dimensional fibers satisfying that 
       \begin{itemize}
           \item $S$ is a section of $X\rightarrow C$;
           \item $F$ consists of some $f$-fibers over $C^{\sm}$; and
           \item $X|_{C^{\sm}} \rightarrow C^{\sm}$ is flat with integral fibers.
       \end{itemize} Any irreducible component $C$ is isomorphic to $\bP^1$, and for any irreducible component $G$ of $C$, the surface $X|_{G^\nu}$ is one of the following:
    \begin{enumerate}
        \item a normal elliptic surface $X|_{G}\rightarrow G\simeq \bP^1$, or
        \item a non-normal fibered surface $X|_{G}\rightarrow G\simeq \bP^1$, whose general fiber is an integral nodal curve or arithmetic genus $1$.
        \end{enumerate}
\end{theorem}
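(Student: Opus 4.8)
The plan is to deduce the statement directly from Theorem~\ref{thm_W_fib_with_a_section}; essentially all of the content is already there, and what remains is to feed in the numerology that isolates the K3 case. So let $(X_0,\epsilon_1 S_0+\epsilon_2 F_0)$ be a pair in $\ove{\mtc{W}}^{\KSBA}_{\textup{K3}}(\epsilon_1,\epsilon_2)$, take a general one-parameter smoothing over a DVR $A$ whose generic fibre is a smooth Weierstrass elliptic K3 with its section $S_\eta$ and marked fibres $F_\eta$ (such smoothings exist by definition of the component), and apply Theorem~\ref{thm_W_fib_with_a_section}. After a finite base change we obtain a fibration
\[
(X,\epsilon_1 S+\epsilon_2 F)\xrightarrow{\ \pi\ }(C,\vec{a}p+\bfM)\longrightarrow\Spec A
\]
with $K_C+\vec{a}p+\bfM$ relatively ample, purely one-dimensional $\pi$-fibres, $C$ the coarse space of a family of twisted curves $\cC$ carrying a fibrewise Weierstrass family $(\cX,\cS)\to\cC$, and, by part~(3), fibres of $X\to C$ over codimension-one points of $C$ with at worst nodal singularities; the special fibre of $\pi$ over $\Spec A$ is the pair we started with.

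Next I would pin down the numerology. Since $X_\eta$ is a K3 surface with only irreducible fibres, the canonical bundle formula for $X_\eta\to C_\eta$ has trivial discriminant part and gives $0\sim_\bQ K_{X_\eta}\sim_\bQ\pi_\eta^*(K_{C_\eta}+\bfM_\eta)$, hence $K_{C_\eta}+\bfM_\eta\sim_\bQ 0$. As $\deg\bfM_\eta\ge 0$ and $\deg\bfM_\eta=0$ would force the $j$-map to be constant (impossible for a non-isotrivial elliptic K3), we get $g(C_\eta)=0$, so $C_\eta\simeq\bP^1$ and $\deg\bfM_\eta=2$. Therefore $C\to\Spec A$ is a flat family of nodal curves of arithmetic genus $0$, its special fibre $C_0$ is a tree of smooth rational curves, and every irreducible component $G$ of $C$ is isomorphic to $\bP^1$. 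The markings $\vec{a}p$ carry tiny coefficients and enter only through the relative ampleness of $K_C+\vec{a}p+\bfM$, which is already part of Theorem~\ref{thm_W_fib_with_a_section} and prevents any rational component from being inadvertently contracted.

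It remains to describe $X|_{G^\nu}$, which is done exactly as for Theorem~\ref{thm:weierstrass fibration in general}. By the construction underlying Theorem~\ref{thm_W_fib_with_a_section} — in particular Corollary~\ref{cor_after_the_contractions_we_still_have_P1_bundle_over_orb_curve} together with Lemmas~\ref{lem:Q-Cartierness of T and Sigma} and~\ref{lem:T and Sigma are disjoint} — the ruled model over $G^\nu$ is the coarse space of a $\bP^1$-bundle over an orbifold curve, and $X|_{G^\nu}$ is the coarse space of its double cover branched along the disjoint union $\Sigma+T$ of a section and a trisection. Fibrewise this cover is the $2{:}1$ cover of $\bP^1$ totally ramified over $4$ points counted with multiplicity; since the $\Sigma$-point is disjoint from the $T$-points, the result is always an integral arithmetic-genus-one curve over $C^{\sm}$, and over the nodes of $C$ nothing worse than a nodal fibre occurs by Theorem~\ref{thm_W_fib_with_a_section}(3). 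Hence $X|_{C^{\sm}}\to C^{\sm}$ has integral fibres; it is flat by miracle flatness, since $X$ is Cohen--Macaulay (a flat degeneration, over the regular base $\Spec A$, of a smooth surface into a demi-normal surface) and $C^{\sm}$ is regular. The marked divisor $F$ meets $C$ only over $C^{\sm}$ because the markings of a stable family of pointed curves avoid the nodes. Finally $X|_{G^\nu}\to G^\nu\simeq\bP^1$ is a normal elliptic surface when the double cover is normal, and otherwise a non-normal fibred surface whose general fibre is an integral nodal genus-one curve — precisely the dichotomy in the statement.

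There is no genuinely hard step: the geometry was established in the proof of Theorem~\ref{thm_W_fib_with_a_section}, and here it is only specialized. The two points needing a little care are (i) reading off $C_\eta\simeq\bP^1$ and $\deg\bfM_\eta=2$ from the canonical bundle formula and propagating rationality to every component of $C_0$, and (ii) recording, as in Theorem~\ref{thm:weierstrass fibration in general}, that the separation of $\Sigma$ from $T$ is exactly what forces the fibres over $C^{\sm}$ to be integral; both are routine. For comparison with the description in \cite{Bru15}, or for a version with general coefficients, one simply carries the marked fibres $\vec{a}F$ along throughout, which changes nothing essential.
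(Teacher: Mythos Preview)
Your proposal is correct and follows exactly the route the paper intends: the paper offers no proof beyond ``Similarly, the following follows from Theorem~\ref{thm_W_fib_with_a_section}'', and you have simply unpacked that sentence, reading off $C_\eta\simeq\bP^1$ from the canonical bundle formula and invoking the disjointness of $\Sigma$ and $T$ together with part~(3) of Theorem~\ref{thm_W_fib_with_a_section} for the fibrewise description. One small sharpening: when you establish the dichotomy (1)/(2), the exclusion of a cuspidal \emph{generic} fibre on a component $G$ is again Theorem~\ref{thm_W_fib_with_a_section}(3) applied at the generic point of $G$ (a codimension-one point of the surface $C$), or equivalently demi-normality of $X_0$; you invoke (3) only at the nodes, so it is worth saying this explicitly.
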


\bibliographystyle{alpha}
\bibliography{citation}
\end{document}